\newtheorem{theorem}{Theorem}[section]
\newtheorem{lemma}[theorem]{Lemma}
\newtheorem{corollary}[theorem]{Corollary}
\newtheorem{proposition}[theorem]{Proposition}
\theoremstyle{definition}
\newtheorem{definition}[theorem]{Definition}
\theoremstyle{remark}
\newtheorem{remark}[theorem]{Remark}
\numberwithin{equation}{section}
\tikzstyle{block} = [rectangle, draw, text width=5.3em, text centered, rounded corners, minimum height=4em]
\tikzstyle{blocknob} = [text width=4.4em, text centered, minimum height=4em]
\tikzstyle{line} = [draw, -latex']
\def\logmeas {\mathrm{logmeas}\,}
\def\logdens {\mathrm{logdens}\,}
\def\D {\mathcal{D}}
\def\A {\mathcal{A}}
\begin{document}

\title[Wiman-Valiron theory for Askey-Wilson series]{Wiman-Valiron theory for a polynomial series based on the Askey-Wilson operator}


\author[K. H. CHENG]{Kam Hang CHENG}
\address{Department of Mathematics, The Hong Kong University of Science and Technology, Clear Water Bay, Kowloon, Hong Kong S.A.R..}
\email{henry.cheng@family.ust.hk}

\author[Y. M. CHIANG]{Yik-Man CHIANG}
\address{Department of Mathematics, The Hong Kong University of Science and Technology, Clear Water Bay, Kowloon, Hong Kong S.A.R..}
\email{machiang@ust.hk}
\thanks{Both authors were partially supported by GRF no. 16306315 from the Research Grant Council of Hong Kong.\ \ The first author was also partially supported by the PDFS (no. PDFS2021-6S04), and the second author was also partially supported by GRF no. 600609 from the Research Grant Council of Hong Kong.}

\subjclass[2020]{Primary 30D10; Secondary 30B50, 30D20, 30E05, 33D45, 39A13}

\date{August 20, 2020.}

\dedicatory{Dedicated to the memories of Jim Clunie and Walter Hayman}

\keywords{Askey-Wilson operator, complex function theory, Wiman-Valiron theory, interpolation series}

\begin{abstract}
We establish a Wiman-Valiron theory of a polynomial series based on the Askey-Wilson operator $\D_q$, where $q\in(0,1)$.\ \ For an entire function $f$ of log-order smaller than $2$, this theory includes (i) an estimate which shows that $f$ behaves locally like a polynomial consisting of the terms near the maximal term of its Askey-Wilson series expansion, and (ii) an estimate of $\D_q^n f$ compared to $f$.\ \ We then apply this theory in studying the growth of entire solutions to difference equations involving the Askey-Wilson operator.
\end{abstract}

\maketitle
\setcounter{tocdepth}{2}
\tableofcontents

\section{Introduction}
\label{sec:Intro}

Let $q$ be a complex number with $0<|q|<1$.\ \ The Askey-Wilson operator $\D_q$ was first considered by Askey and Wilson in \cite{Askey-Wilson}.\ \ They constructed a family of basic hypergeometric orthogonal polynomials, now known as the Askey-Wilson polynomials, which are eigensolutions of a second order difference equation in which the divided-difference operator $\D_q$ appears. 

In classical complex function theory, Wiman-Valiron theory has revealed the relationship between an analytic function and the maximal term in its Taylor series expansion.\ \ It has been developed in the 1910s by Wiman \cite{Wiman1, Wiman2}, and then extensively developed by Valiron \cite{Valiron3}, Saxer \cite{Saxer}, Clunie \cite{Clunie1, Clunie2} and K\"{o}vari \cite{Kovari1, Kovari2}.\ \ Hayman has further refined the theory which becomes what is now generally regarded as the K\"{o}vari-Hayman approach \cite{Hayman2}.\ \ Fenton \cite{Fenton1976, Fenton1993, Fenton} has substantially improved previous work by relaxing the finite order restriction and using much more concise arguments.

In recent years, there has been interest to study different topics in special functions related to finite difference calculus using a Nevanlinna-theory approach \cite{Halburd-Korhonen, Chiang-Feng1, Cheng-Chiang, Chiang-Feng3}.\ \ However, when studying function-theoretic properties such as various finite-difference analogues of Wiman-Valiron theories, it appears more natural to make use of a series expansion under an appropriate interpolating polynomial basis \cite{Ismail-Stanton} instead of the usual power series.\ \ Ishizaki and Yanagihara \cite{IY} were the first to study a finite-difference analogue of Wiman-Valiron theory to Newton expansions for entire functions with order smaller than $1/2$, by adopting the K\"{o}vari-Hayman approach \cite{Hayman2} to Netwon interpolation polynomials.\ \ Following this, the first author \cite{Cheng} further modified their theory to Wilson interpolation polynomials, originated from the Wilson operator which is a certain kind of \textit{algebraically deformed} central difference operator.\ \ In this paper, we continue the K\"{o}vari-Hayman approach of the study of Wiman-Valiron-theory based on an interpolating polynomial basis originated from the Askey-Wilson operator $\D_q$, which is both a kind of algebraically deformed central difference operator \textit{and} with a $q$-deformation.\ \ This poses new difficulties in the analysis when compared to previous studies \cite{IY, Cheng}.

Given an entire function $f:\mathbb{C}\to\mathbb{C}$, one defines the \textbf{\textit{logarithmic order}} (or \textbf{\textit{log-order}}) of $f$ to be
\[
	\sigma_{\log}\equiv\sigma_{\log}(f):=\limsup_{r\to\infty}{\frac{\ln^+\ln^+ M(r;f)}{\ln\ln r}},
\]
where $M(r;f):=\max\{|f(x)|:|x|=r\}$.\ \ (Note that a non-constant entire function has log-order at least $1$.)\ \ The main content of this paper is to show that if an entire function $f$ has log-order $\sigma_{\log}<2$, then in its Askey-Wilson series expansion
	\[
		f(x) = \sum_{k=0}^\infty{a_k \phi_k(x;1)},
	\]
	the terms that are ``far away" from a certain ``maximal term" will be small, so that $f$ can be approximated locally by the \textit{polynomial}
	\[
		\sum_{|k-N|< \kappa} a_k \phi_k(x;1),
	\]
	where the integer $N=N(r)\to\infty$ as $r=|x|\to\infty$ is called the \textit{Askey-Wilson central index} of $f$ and the integer $\kappa=\kappa(r)=o(N)$ as $r\to\infty$.\ \ Similar estimates also hold for higher-order Askey-Wilson differences $\mathcal{D}_q^nf$ (Theorem~\ref{WVmain}).\ \ These results conform with the general development of the classical Wiman-Valiron theory for Taylor expansions of entire functions \cite{Hayman2}.\ \ However, the substantial number of technical estimates that are required to establish the current theory for the Askey-Wilson series are not just straight-forward adaptations of those from the classical theory or the previously established interpolation series in \cite{IY, Cheng} where no $q$-deformation was needed.

	Along the way, we have also obtained other related results, among which include (i) a precise formula that relates the log-order $\sigma_{\log}<2$ of an entire function $f$ and the coefficients $\{a_n\}$ of its Askey-Wilson series expansion (Theorem~\ref{AWLP}):
	\[
		\frac{1}{\sigma_{\log}-1}=\liminf_{n\to\infty}{\frac{\ln\ln\frac{1}{|a_n|}}{\ln n}} -1,
	\]
	which mimics the formula that relates the $(p,q)$-order of an entire function (here $(p,q)=(1,2)$) and the coefficients of its Taylor series expansion \cite{JKB}; as well as (ii) a new recursion formula
	\[
	T(k,n) = \frac{q^n+q^{-n}}{2}T(k-1,n) - \frac{q^{1-n}}{2}T(k-1,n-1)
	\]
	for the numbers
	\[
		T(k,n):= \frac{1}{(-2)^n[n]_q^{!}}q^{-\frac{n(n-1)}{4}}\left.\D_q^n x^k\right|_{x=\hat{1}^{(n)}}.
	\]
	(Proposition~\ref{C}).\ \ These numbers are of combinatorial nature, and are needed in establishing some of our main Wiman-Valiron estimates.

This paper is organized as follows.\ \ In \S\ref{sec:Dq}, we will first give the definitions and some basic properties of the Askey-Wilson operator $\D_q$ as well as the Askey-Wilson interpolating polynomial basis $\{\phi_n(x;x_0)\}$.\ \ In the subsequent sections, we will then develop the Wiman-Valiron theory for $\D_q$.\ \ We will state in \S\ref{sec:Mainresults} our main results which include two theorems.\ \ Before proving all these in \S\ref{sec:Proofs}, we will introduce some properties of the Askey-Wilson maximal term and central index in \S\ref{sec:Propmunu}.\ \ Finally, we will apply the main results in \S\ref{sec:Applications} to prove a theorem about the growth of transcendental entire solutions to some linear Askey-Wilson difference equations, and we will strive for some further strengthening of our main results in the discussion section \S\ref{sec:Discussion}.

In this paper, we adopt the following notations:
\begin{enumerate}[(i)]
	\item $\mathbb{N}$ denotes the set of all natural numbers \textit{excluding} $0$, and $\mathbb{N}_0:=\mathbb{N}\cup\{0\}$.
	\item For every positive real number $r$ and every complex number $a$, $D(a;r)$ denotes the open disk of radius $r$ centered at $a$ in the complex plane.
	\item For every positive real number $r$, we denote $\ln^+ r:=\max\{\ln r, 0\}$.
	\item Unless otherwise specified, $q$ always denote a fixed real number with $0<q<1$.\ \ For $n\in\mathbb{N}\cup\{0,\infty\}$, the \textit{$n^{th}$ $q$-shifted factorial} of a complex number $a$ is defined by
	\[
		(a;q)_n:=\prod_{k=0}^{n-1}(1-aq^k),
	\]
	and for complex numbers $a_1,a_2,\ldots,a_k$ we also denote
	\[
		(a_1,a_2,\ldots,a_k;q)_n:=(a_1;q)_n(a_2;q)_n\cdots(a_k;q)_n.
	\]
	For $n\in\mathbb{N}_0$, the \textit{$q$-bracket} of $n$ is defined by
	\[
		[n]_q:=\frac{1-q^n}{1-q}=1+q+q^2+\cdots+q^{n-1},
	\]
	and the \textit{$q$-factorial} of $n$ is defined by
	\[
		[n]_q^{!}:=\prod_{k=1}^n[k]_q = \frac{(q;q)_n}{(1-q)^n}.
	\]
	For integers $0\le k\le n$, the \textit{$q$-binomial coefficient} is defined by
	\[
		\left[\begin{matrix}n\\k\end{matrix}\right]_q:=\frac{[n]_q^{!}}{[k]_q^{!}[n-k]_q^{!}} = \frac{(q;q)_n}{(q;q)_k(q;q)_{n-k}}.
	\]
	\item A \textit{complex function} always means a function in \textit{one} complex variable, and an \textit{entire function} always means a holomorphic function from $\mathbb{C}$ to $\mathbb{C}$, unless otherwise specified.
	\item A summation notation of the form $\displaystyle\sum_{k:S_k}$ denotes a sum running over all the $k$'s such that the statement $S_k$ is true.
	\item For any two functions $f,g:[0,\infty)\to\mathbb{R}$, we write
	\begin{itemize}
		\item $g(r)=O(f(r))$ as $r\to\infty$ if and only if there exist $C>0$ and $M>0$ such that $|g(r)|\le C|f(r)|$ whenever $r>M$;
		\item $g(r)=o(f(r))$ as $r\to\infty$ if and only if for every $C>0$, there exists $M>0$ such that $|g(r)|\le C|f(r)|$ whenever $r>M$;
		\item $f(r)\sim g(r)$ as $r\to\infty$ if and only if for every $\varepsilon>0$, there exists $M>0$ such that $\left|\frac{f(r)}{g(r)}-1\right|<\varepsilon$ whenever $r>M$.
	\end{itemize}
\end{enumerate}

\section{The Askey-Wilson operator and the Askey-Wilson basis}
\label{sec:Dq}

In this section, we give the definition of the Askey-Wilson operator and a few of its properties.

\begin{definition}
	For each $x\in\mathbb{C}$, we write $x=\frac{z+z^{-1}}{2}$ and denote
	\[
		\hat{x} := \frac{q^{\frac12}z+q^{-\frac12}z^{-1}}{2} \hspace{30px} \mbox{and} \hspace{30px} \check{x} := \frac{q^{-\frac12}z+q^{\frac12}z^{-1}}{2}.
	\]
	We also denote $\hat{x}^{(0)}=\check{x}^{(0)}:=x$, $\hat{x}^{(m)}:=(\hat{x}^{(m-1)})\hat{ }$, $\check{x}^{(m)}:=(\check{x}^{(m-1)})\check{ }$, $\hat{x}^{(-m)}=\check{x}^{(m)}$ and $\check{x}^{(-m)}=\hat{x}^{(m)}$ for every positive integer $m$.\ \ Then we define the \textbf{\textit{Askey-Wilson operator}} $\D_q$, which acts on all complex functions, as follows:
	\begin{align}
		\label{A1} (\D_q f)(x) := \frac{f(\hat{x})-f(\check{x})}{\hat{x}-\check{x}} = \frac{f(\frac{q^{\frac12}z+q^{-\frac12}z^{-1}}{2})-f(\frac{q^{-\frac12}z+q^{\frac12}z^{-1}}{2})}{(q^{\frac12}-q^{-\frac12})\frac{z-z^{-1}}{2}}.
	\end{align}
	We also define the \textbf{\textit{Askey-Wilson averaging operator}} $\A_q$ \cite{Ismail1} by
	\begin{align}
		\label{A2} (\A_q f)(x) := \frac{f(\hat{x})+f(\check{x})}{2} = \frac{f(\frac{q^{\frac12}z+q^{-\frac12}z^{-1}}{2})+f(\frac{q^{-\frac12}z+q^{\frac12}z^{-1}}{2})}{2}.
	\end{align}
\end{definition}

According to \eqref{A1} and \eqref{A2}, although there are two choices of $z$ for each $x\ne\pm 1$, $(\D_q f)(x)$ and $(\A_q f)(x)$ are independent of the choice of $z$ and are thus always well-defined.\ \ Moreover, the values of $\D_q f$ at $1$ and $-1$ should be defined respectively as
	\[
		(\D_q f)(\pm 1) := \lim_{x\to\pm 1}(\D_q f)(x) = f'\left(\pm\frac{q^{\frac12}+q^{-\frac12}}{2}\right),
	\]
in case $f$ is differentiable at $\frac{q^{\frac12}+q^{-\frac12}}{2}$ and $-\frac{q^{\frac12}+q^{-\frac12}}{2}$.

We consider the branches of square-root in the expression $z=x+\sqrt{x^2-1}$ with $[-1,1]$ as the branch cut, such that $\sqrt{x^2-1}\approx x$ as $x\to\infty$, and the values of $z$ for $x\in[-1,1]$ is chosen by taking limit as $x$ approaches the segment $[-1,1]$ from above the real axis.\ \ We also have the pointwise limit
\[
	\lim_{q\to 1}(\D_q f)(x) = f'(x)
\]
for each $x\in\mathbb{C}$. \\

We first look at some algebraic properties of the Askey-Wilson operator.\ \ First of all, it is apparent from the definition of $\D_q$ that it is a linear operator, so it makes sense to talk about its kernel.\ \ The second author and Feng \cite[Theorem 10.2]{Chiang-Feng3} have verified that in the space of meromorphic functions, the kernel of $\D_q$ is the field of all functions $f$ of the form
\[
	f(x) = c\prod_{j=1}^k{\frac{(a_jz,a_jz^{-1},\frac{q}{a_j}z,\frac{q}{a_j}z^{-1};q)_\infty}{(b_jz,b_jz^{-1},\frac{q}{b_j}z,\frac{q}{b_j}z^{-1};q)_\infty}},
\]
where $k$ is a non-negative integer and $a_1,\ldots,a_k,b_1,\ldots,b_k,c$ are complex numbers.\ \ In particular, the only entire functions in $\ker\D_q$ are constant functions. \\

The Askey-Wilson operator has the following product rule and quotient rule.
\begin{lemma}
\textup{(Askey-Wilson product and quotient rules)}
\label{PQRule}
\cite[p. 301]{Ismail1}
	For every pair of complex functions $f$ and $g$, we have
	\begin{enumerate}[(i)]
		\item
			$\displaystyle (\D_q(fg))(x) = (\A_q f)(x)(\D_q g)(x) + (\D_q f)(x)(\A_q g)(x)$, and
		\item
			$\displaystyle \left(\D_q\frac{f}{g}\right)(x) = \frac{(\D_q f)(x)(\A_q g)(x) - (\A_q f)(x) (\D_q g)(x)}{g(\hat{x})g(\check{x})}$ whenever $g\not\equiv 0$.
	\end{enumerate}
\end{lemma}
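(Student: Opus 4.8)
The plan is to derive both identities by simply unwinding the definitions \eqref{A1} and \eqref{A2}, the only genuine ingredient being an elementary ``polarization'' identity for products and cross-products of two numbers. Fix $x=\frac{z+z^{-1}}{2}$ with $x\neq\pm1$, so that $\hat{x}$ and $\check{x}$ are the two sample points occurring in \eqref{A1}. The key observation is that for any complex numbers $a_1,a_2,b_1,b_2$ one has
\begin{align*}
	a_1b_1-a_2b_2 &= \tfrac12\big[(a_1+a_2)(b_1-b_2)+(a_1-a_2)(b_1+b_2)\big],\\
	a_1b_2-a_2b_1 &= \tfrac12\big[(a_1-a_2)(b_1+b_2)-(a_1+a_2)(b_1-b_2)\big],
\end{align*}
both checked by expanding the right-hand sides.

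For part (i), I would apply the first identity with $a_1=f(\hat{x})$, $a_2=f(\check{x})$, $b_1=g(\hat{x})$, $b_2=g(\check{x})$, obtaining
\[
	f(\hat{x})g(\hat{x})-f(\check{x})g(\check{x})=\tfrac12\big[(f(\hat{x})+f(\check{x}))(g(\hat{x})-g(\check{x}))+(f(\hat{x})-f(\check{x}))(g(\hat{x})+g(\check{x}))\big].
\]
Dividing through by $\hat{x}-\check{x}$ and distributing the factor $\tfrac12$ and the factor $\tfrac{1}{\hat{x}-\check{x}}$ among the four bracketed terms according to \eqref{A1} and \eqref{A2} yields at once $(\D_q(fg))(x)=(\A_q f)(x)(\D_q g)(x)+(\D_q f)(x)(\A_q g)(x)$.

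For part (ii), I would first write, at every $x$ for which $g(\hat{x})g(\check{x})\neq 0$,
\[
	\Big(\D_q\tfrac{f}{g}\Big)(x)=\frac{\tfrac{f(\hat{x})}{g(\hat{x})}-\tfrac{f(\check{x})}{g(\check{x})}}{\hat{x}-\check{x}}=\frac{f(\hat{x})g(\check{x})-f(\check{x})g(\hat{x})}{(\hat{x}-\check{x})\,g(\hat{x})g(\check{x})},
\]
and then apply the second polarization identity to the numerator and regroup exactly as in part (i), which produces the stated formula with denominator $g(\hat{x})g(\check{x})$. Finally I would dispose of the routine bookkeeping: by the remarks following \eqref{A2}, $(\D_q h)(x)$ and $(\A_q h)(x)$ are independent of the choice of $z$, so both sides of each identity are well-defined as functions of $x$; the identity in (ii) then holds as an identity of meromorphic functions (hence extends across the removable points where $g(\hat{x})g(\check{x})=0$), and at $x=\pm1$ one passes to the limit using the convention for $\D_q h$ there when $f$ and $g$ are differentiable at $\pm\tfrac{q^{1/2}+q^{-1/2}}{2}$. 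There is no real obstacle here: the entire content is the two bilinear identities above, so the only thing to be careful about is keeping the $\hat{x}$/$\check{x}$ indices straight while regrouping; everything else is formal.
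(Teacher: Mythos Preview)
Your argument is correct: the two polarization identities are exactly what is needed, and the regrouping into $\A_q$ and $\D_q$ factors goes through as you describe. Note that the paper does not actually prove Lemma~\ref{PQRule}; it merely quotes the result from \cite[p.~301]{Ismail1}, so there is no ``paper's own proof'' to compare against---your direct verification supplies what the paper omits.
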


More generally, Ismail came up with the following Leibniz rule for the Askey-Wilson operator.

\begin{theorem}
\textup{(Askey-Wilson Leibniz rule)}
\label{AWLeibniz}
\cite{Ismail}
For every pair of complex functions $f$ and $g$ and every $n\in\mathbb{N}_0$, we have
\[
	\D_q^n(fg) = \sum_{k=0}^n{\left[\begin{matrix}n\\k\end{matrix}\right]_q q^{-\frac{k(n-k)}{2}}(\eta_q^k\D_q^{n-k}f)(\eta_q^{-(n-k)}\D_q^k g)},
\]
where $\eta_q$ is the one-sided Askey-Wilson shift operator defined by
\[
	(\eta_q f)(x)=f(\hat{x}).
\]
\end{theorem}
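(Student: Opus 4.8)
The plan is to prove the identity by induction on $n$, driven by a one-sided variant of the product rule (cf.\ Lemma~\ref{PQRule}(i)). I would first record two elementary operator identities. Splitting $F(\hat x)G(\hat x)-F(\check x)G(\check x)=F(\hat x)\bigl[G(\hat x)-G(\check x)\bigr]+\bigl[F(\hat x)-F(\check x)\bigr]G(\check x)$ and dividing by $\hat x-\check x$ gives, for arbitrary complex functions $F,G$, the \emph{one-sided product rule}
\[
	\D_q(FG)=(\eta_q F)(\D_q G)+(\D_q F)(\eta_q^{-1}G),
\]
where $\eta_q^{-1}$ is the operator $(\eta_q^{-1}G)(x):=G(\check x)$; the branch conventions give $\hat{\check x}=\check{\hat x}=x$, so $\eta_q^{-1}$ really is a two-sided inverse of $\eta_q$. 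Next, since $\eta_q^m$ carries a function $\psi$ to $x\mapsto\psi(\hat x^{(m)})$ and $\hat x^{(m)}$ has ``$z$-coordinate'' $q^{m/2}z$ when $x=\tfrac{z+z^{-1}}{2}$, a direct computation from \eqref{A1} yields the commutation relation
\[
	\D_q\,\eta_q^m=\frac{\hat x^{(m+1)}-\hat x^{(m-1)}}{\hat x-\check x}\;\eta_q^m\,\D_q\qquad(m\in\mathbb Z),
\]
whose scalar factor equals $\dfrac{q^{m/2}z-q^{-m/2}z^{-1}}{z-z^{-1}}$. The decisive new feature, absent from the Newton and Wilson theories of \cite{IY,Cheng}, is that this factor is \emph{not} a power of $q$: it genuinely depends on $z$ (hence on the branch) once $m\ne0$.

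The base case $n=0$ is the tautology $fg=fg$, and $n=1$ is exactly the one-sided product rule (with $\left[\begin{smallmatrix}1\\0\end{smallmatrix}\right]_q=\left[\begin{smallmatrix}1\\1\end{smallmatrix}\right]_q=1$). For the inductive step I apply $\D_q$ to the $n$-th formula termwise: to the summand $\left[\begin{smallmatrix}n\\k\end{smallmatrix}\right]_q q^{-k(n-k)/2}\,(\eta_q^k\D_q^{n-k}f)(\eta_q^{-(n-k)}\D_q^k g)$ I apply the one-sided product rule and then push the new $\D_q$'s past the $\eta_q$-powers using the commutation relation together with $\eta_q^{\pm1}\eta_q^m=\eta_q^{m\pm1}$. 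Each summand splits into a term of shape $(\eta_q^{k+1}\D_q^{n-k}f)(\eta_q^{-(n-k)}\D_q^{k+1}g)$, carrying the $z$-dependent factor from $\D_q\eta_q^{-(n-k)}$, and a term of shape $(\eta_q^{k}\D_q^{n-k+1}f)(\eta_q^{-(n-k+1)}\D_q^{k}g)$, carrying the factor from $\D_q\eta_q^{k}$. After re-indexing, the slot $(\eta_q^{j}\D_q^{n+1-j}f)(\eta_q^{-(n+1-j)}\D_q^{j}g)$ of $\D_q^{n+1}(fg)$ receives contributions only from the old indices $k=j-1$ and $k=j$, with total coefficient
\[
	\left[\begin{smallmatrix}n\\j-1\end{smallmatrix}\right]_q q^{-\frac{(j-1)(n-j+1)}{2}}\,\frac{q^{-\frac{n-j+1}{2}}z-q^{\frac{n-j+1}{2}}z^{-1}}{z-z^{-1}}\;+\;\left[\begin{smallmatrix}n\\j\end{smallmatrix}\right]_q q^{-\frac{j(n-j)}{2}}\,\frac{q^{\frac j2}z-q^{-\frac j2}z^{-1}}{z-z^{-1}}.
\]

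The crux is to show that this expression is independent of $z$ and equals $\left[\begin{smallmatrix}n+1\\j\end{smallmatrix}\right]_q q^{-j(n+1-j)/2}$. Writing the sum over the common denominator $z-z^{-1}$, constancy is exactly the equality of the two numerator coefficients; after cancelling a common power of $q$ this reduces to the elementary $q$-arithmetic identity $\left[\begin{smallmatrix}n\\j-1\end{smallmatrix}\right]_q[n-j+1]_q=\left[\begin{smallmatrix}n\\j\end{smallmatrix}\right]_q[j]_q$ (both sides being $[n]_q^{!}/([j-1]_q^{!}\,[n-j]_q^{!})$). Granting this, the common constant value is read off as the coefficient of $z$; a short rearrangement together with the $q$-Pascal identity $\left[\begin{smallmatrix}n+1\\j\end{smallmatrix}\right]_q=\left[\begin{smallmatrix}n\\j-1\end{smallmatrix}\right]_q+q^{j}\left[\begin{smallmatrix}n\\j\end{smallmatrix}\right]_q$ turns it into $\left[\begin{smallmatrix}n+1\\j\end{smallmatrix}\right]_q q^{-j(n+1-j)/2}$, closing the induction. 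The boundary slots $j=0$ and $j=n+1$ receive a single contribution each, with commutation factor $1$, and are checked directly.

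I expect the main obstacle to be precisely this reconciliation: because $\D_q$ and $\eta_q$ do not commute up to a scalar, the induction cannot proceed factor by factor, and one must see that the two contributions to each slot are arranged so that their $z$-dependence cancels while the surviving fractional $q$-powers and $q$-binomials recombine through the two $q$-identities above. Keeping the shift indices and half-integer exponents straight takes care, but nothing beyond this bookkeeping is needed.
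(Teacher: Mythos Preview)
Your argument is correct. The one-sided product rule and the commutation relation
\[
\D_q\eta_q^m=\frac{q^{m/2}z-q^{-m/2}z^{-1}}{z-z^{-1}}\,\eta_q^m\D_q
\]
are both easily checked from \eqref{A1}, and your inductive bookkeeping goes through: the cancellation of the $z$-dependence in the slot $j$ reduces exactly to $\left[\begin{smallmatrix}n\\j\end{smallmatrix}\right]_q[j]_q=\left[\begin{smallmatrix}n\\j-1\end{smallmatrix}\right]_q[n-j+1]_q$, and the resulting constant, after factoring out $q^{-j(n+1-j)/2}$, is $\left[\begin{smallmatrix}n\\j-1\end{smallmatrix}\right]_q+q^j\left[\begin{smallmatrix}n\\j\end{smallmatrix}\right]_q=\left[\begin{smallmatrix}n+1\\j\end{smallmatrix}\right]_q$ by $q$-Pascal.

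As for comparison with the paper: there is nothing to compare. The paper does not prove Theorem~\ref{AWLeibniz}; it simply quotes the result from Ismail \cite{Ismail} and uses it later (in the proof of Theorem~\ref{WVmain}). Your induction via the one-sided product rule is in fact the standard route to such Leibniz formulas, and is essentially how it is done in the source. One minor remark on presentation: the operators $\eta_q^{\pm1}$ depend on the branch of $z$, so your assertion that $\eta_q^{-1}$ is a genuine two-sided inverse of $\eta_q$ is only literally true once the branch convention of \S\ref{sec:Dq} is in force (and for $|x|$ outside a bounded set); this does not affect the argument but is worth a word if you write it up formally.
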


\bigskip
Now we turn to some analytic properties of the Askey-Wilson operator.\ \ We can easily check from the definition of the Askey-Wilson operator $\D_q$ that it sends polynomials to polynomials.\ \ In fact, we have the following.
\begin{proposition}
\label{Mero}
\cite[Theorem 2.1]{Chiang-Feng3}
	Let $f$ be a complex function.\ \ Then
	\begin{enumerate}[(i)]
	\item if $f$ is entire, then $\D_q f$ and $\A_q f$ are also entire; and
	\item if $f$ is meromorphic, then $\D_q f$ and $\A_q f$ are also meromorphic.
	\end{enumerate}
\end{proposition}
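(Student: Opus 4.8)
The plan is to exploit that $\D_q f$ and $\A_q f$ depend on $f$ only through the \emph{symmetric} functions of its values at $\hat x$ and $\check x$, together with the fact that those symmetric functions are polynomials in $x$. First I would record the two algebraic identities
\[
\hat x+\check x=(q^{\frac12}+q^{-\frac12})x,\qquad \hat x\check x=x^2+\tfrac14(q^{\frac12}-q^{-\frac12})^2,
\]
valid for \emph{either} choice of $z$ with $x=\frac{z+z^{-1}}{2}$; thus $\hat x,\check x$ are the two roots of a monic quadratic in $t$ whose coefficients are polynomials in $x$. Next, for an entire $f(x)=\sum_{k\ge0}c_kx^k$ I would introduce on $\mathbb C^2$ the divided difference $(\delta f)(u,v):=\sum_{k\ge1}c_k\sum_{i=0}^{k-1}u^iv^{k-1-i}$ and the average $(\mu f)(u,v):=\tfrac12\big(f(u)+f(v)\big)$. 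A routine Cauchy-type estimate on the partial sums shows both are entire on $\mathbb C^2$, both are \emph{symmetric} in $(u,v)$, and $(\delta f)(u,v)=\frac{f(u)-f(v)}{u-v}$ off the diagonal while $(\delta f)(u,u)=f'(u)$. In particular, with the identities above, $(\D_q f)(x)=(\delta f)(\hat x,\check x)$ and $(\A_q f)(x)=(\mu f)(\hat x,\check x)$ hold for \emph{all} $x$ (the cases $x=\pm1$, where $\hat x=\check x$, being precisely the limiting definitions of $\D_q f$ at $\pm1$).

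The key lemma I would then establish is a structure theorem: every symmetric entire function $F$ on $\mathbb C^2$ has the form $F(u,v)=\widetilde F(u+v,uv)$ for a unique entire $\widetilde F$ on $\mathbb C^2$. One proves this via the substitution $s=u+v$, $w=u-v$: the function $G(s,w):=F\big(\frac{s+w}{2},\frac{s-w}{2}\big)$ is entire and, by symmetry of $F$, even in $w$; an even entire function (with holomorphic dependence on the parameter $s$) is an entire function of its square, say $G(s,w)=\widehat G(s,w^2)$, and then $\widetilde F(s,p):=\widehat G(s,s^2-4p)$ works. Granting this, for entire $f$ I obtain
\[
(\D_q f)(x)=\widetilde{\delta f}\!\left((q^{\frac12}+q^{-\frac12})x,\ x^2+\tfrac14(q^{\frac12}-q^{-\frac12})^2\right),
\]
and likewise $\A_q f=\widetilde{\mu f}(\,\cdots)$; each is the composition of an entire function on $\mathbb C^2$ with a polynomial map $\mathbb C\to\mathbb C^2$, hence entire. (This also re-proves that $\D_q f,\A_q f$ are independent of the choice of $z$ and that $\D_q$, $\A_q$ preserve polynomials.)

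For the meromorphic case I would write $f=g/h$ with $g,h$ entire and $h\not\equiv0$, and apply the quotient rule of Lemma~\ref{PQRule}(ii):
\[
\Big(\D_q\tfrac gh\Big)(x)=\frac{(\D_q g)(x)(\A_q h)(x)-(\A_q g)(x)(\D_q h)(x)}{h(\hat x)\,h(\check x)}.
\]
The numerator is entire by the previous step. The denominator is the symmetric entire function $(u,v)\mapsto h(u)h(v)$ evaluated at $(\hat x,\check x)$, hence entire in $x$ by the structure theorem, and it is $\not\equiv0$: were it identically zero, then since $h$ has only discrete zeros, $\hat x$ or $\check x$ would have to lie in that discrete set for every $x$, which is impossible because $x\mapsto\hat x,\check x$ are non-constant. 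Thus $\D_q f$ is a quotient of entire functions, i.e. meromorphic. For $\A_q$ one computes directly $(\A_q f)(x)=\dfrac{g(\hat x)h(\check x)+g(\check x)h(\hat x)}{2\,h(\hat x)h(\check x)}$, whose numerator is the symmetric entire function $(u,v)\mapsto g(u)h(v)+g(v)h(u)$ at $(\hat x,\check x)$, hence entire in $x$; so $\A_q f$ is meromorphic as well.

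The one place where genuine complex analysis, rather than formal manipulation, enters is the structure theorem for symmetric entire functions of two variables (equivalently: an even entire function, uniformly in a holomorphic parameter, is an entire function of its square); it is classical but is the real content of the argument. An alternative that sidesteps it: on each disk $|x|\le R$ the values $\hat x,\check x$ stay bounded, the map $x\mapsto(\delta f)(\hat x,\check x)$ is well-defined and \emph{continuous} on the whole disk — on crossing the slit $[-1,1]$ the unordered pair $\{\hat x,\check x\}$ is preserved, so symmetry cancels the branch ambiguity of $z=x+\sqrt{x^2-1}$ — and holomorphic off $[-1,1]$, hence holomorphic everywhere, since a continuous function holomorphic off an analytic arc is holomorphic; the same applies to $\mu f$ and, after clearing the denominator $h(\hat x)h(\check x)$, to the meromorphic case.
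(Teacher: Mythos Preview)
Your argument is correct. The paper does not prove this proposition at all; it is quoted from \cite{Chiang-Feng3} without proof, so there is no in-paper argument to compare against. What you supply is a genuine self-contained proof.

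A few remarks on the content. Your two elementary-symmetric identities $\hat x+\check x=(q^{1/2}+q^{-1/2})x$ and $\hat x\check x=x^2+\tfrac14(q^{1/2}-q^{-1/2})^2$ are exactly what makes the whole thing work, and your recognition that $(\D_q f)(x)$ and $(\A_q f)(x)$ are the symmetric entire functions $\delta f$ and $\mu f$ evaluated at $(\hat x,\check x)$ is the right organizing principle. The structure theorem ``symmetric entire on $\mathbb C^2$ $=$ entire in the elementary symmetric functions'' is the only non-trivial analytic input; your sketch via the even-in-$w$ reduction is standard and correct, and your alternative via continuity across the slit $[-1,1]$ plus holomorphy off the slit (hence holomorphy everywhere by a Painlev\'e/Morera-type removability) is equally valid and arguably more elementary. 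Either route also transparently recovers the paper's limiting definition of $(\D_q f)(\pm 1)=f'(\pm\tfrac{q^{1/2}+q^{-1/2}}{2})$, since $\hat x=\check x$ precisely at $x=\pm1$ and $(\delta f)(u,u)=f'(u)$.

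For the meromorphic case your use of the quotient rule (Lemma~\ref{PQRule}(ii)) is legitimate and non-circular, and your observation that $h(\hat x)h(\check x)$ and $g(\hat x)h(\check x)+g(\check x)h(\hat x)$ are symmetric---hence entire in $x$ by the same structure theorem---cleanly finishes the job. The non-vanishing of the denominator is fine: $h(\hat x)h(\check x)$ is an entire function of $x$, and on the open set $\mathbb C\setminus[-1,1]$ it equals the product of two holomorphic functions each of which is nonzero off a discrete set, so it is not identically zero.
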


\pagebreak
After introducing the Askey-Wilson operator and some of its useful properties, we will look at a series expansion of entire functions in a polynomial basis based on the Askey-Wilson operator.
\begin{definition}
\label{AWbasis}
	Let $x_0\in\mathbb{C}$.\ \ Then the \textbf{\textit{Askey-Wilson basis}} $\{\phi_k(x;x_0):k\in\mathbb{N}_0\}$ is defined as $\phi_0(x;x_0):=1$, and
	\begin{align*}
		\phi_k(x;x_0) := \left(az,\frac{a}{z};q\right)_k &= \prod_{j=0}^{k-1}{(1-2axq^j+a^2q^{2j})} \\
		&= (-2a)^k q^{\frac{k(k-1)}{2}}\prod_{j=0}^{k-1}{\left(x-\frac{aq^j+a^{-1}q^{-j}}{2}\right)}
	\end{align*}
for $k\in\mathbb{N}$, where $a:=x_0+\sqrt{x_0^2-1}$ with the branch of square root chosen as before, so that $x_0=\frac{a+a^{-1}}{2}$.\ \ We emphasize that whenever the Askey-Wilson basis $\{\phi_k(x;x_0):k\in\mathbb{N}_0\}$ is mentioned, the symbol $a$ automatically take the aforesaid meaning.
\end{definition}

Note that the choice of branch of square root in Definition~\ref{AWbasis} ensures that $x_0,\hat{x_0}^{(2)},\hat{x_0}^{(4)},\ldots$ are distinct points.

\bigskip
The Askey-Wilson operator interacts with the Askey-Wilson basis in a similar way as the ordinary differential operator $\frac{d}{dx}$ does with the basis $\{(x-x_0)^k:k\in\mathbb{N}_0\}$.\ \ The following formula is the Askey-Wilson counterpart of the ordinary differention formula $\frac{d}{dx}x^k=kx^{k-1}$ for every positive integer $k$.
\begin{proposition}
\label{PhiRule}
\cite{Askey-Wilson}
	For every $x_0\in\mathbb{C}$ and every $k\in\mathbb{N}$, we have
	\begin{align*}
		(\D_q\phi_k)(x;x_0) = -2a[k]_q\phi_{k-1}(x;\hat{x_0}).
	\end{align*}
\end{proposition}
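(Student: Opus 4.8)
The plan is to verify the identity by a direct computation from the definition of $\D_q$ in \eqref{A1}, exploiting the fact that each factor of $\phi_k(x;x_0)$ is a quadratic in $x$ whose two roots are symmetric partners under $z\mapsto q^{\pm\frac12}z$. Write $x=\frac{z+z^{-1}}{2}$ and $a=x_0+\sqrt{x_0^2-1}$. Using the product form $\phi_k(x;x_0)=\bigl(az,\frac{a}{z};q\bigr)_k=\prod_{j=0}^{k-1}(1-azq^j)(1-\frac{a}{z}q^j)$, I would substitute $x=\hat x$, i.e. replace $z$ by $q^{\frac12}z$, and $x=\check x$, i.e. replace $z$ by $q^{-\frac12}z$. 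The key observation is the \emph{telescoping/shift} phenomenon:
\[
	\left(aq^{\frac12}z,\tfrac{a}{q^{\frac12}z};q\right)_k = \prod_{j=0}^{k-1}\bigl(1-aq^{\frac12}zq^j\bigr)\bigl(1-\tfrac{a}{z}q^{-\frac12}q^j\bigr),
\]
and similarly for $\check x$ with $q^{\frac12}$ replaced by $q^{-\frac12}$. Setting $b:=aq^{\frac12}$ (so that $b$ is the parameter attached to $\hat x_0$, since $\hat x_0 = \frac{aq^{\frac12}+a^{-1}q^{-\frac12}}{2}$ gives exactly $b+b^{-1}=2\hat x_0$ after the branch choice), one checks that $\phi_k(\hat x;x_0)$ and $\phi_k(\check x;x_0)$ both contain the common block $\prod_{j=1}^{k-1}(1-bzq^{j-1})(1-\frac{a}{z}q^{j-1}\cdot q^{-\frac12})$ — more cleanly, both equal $\bigl(bz,\frac{b^{-1}\cdot(\text{something})}{\ };q\bigr)$ up to one extra factor at each end.

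Concretely, I would show
\[
	\phi_k(\hat x;x_0) = (1-aq^{\frac12}z)\,\phi_{k-1}(\hat x;\hat x_0) \qquad\text{and}\qquad \phi_k(\check x;x_0) = \bigl(1-aq^{-\frac12}z^{-1}\bigr)\,\phi_{k-1}(\check x;\hat x_0)
\]
by matching the defining products index-by-index (the $z\mapsto q^{\pm\frac12}z$ substitution turns the $a$-indexed product for $x_0$ into the $b=aq^{\frac12}$-indexed product for $\hat x_0$, shifted by one and with one boundary factor left over). Granting these two factorizations, the numerator of $(\D_q\phi_k)(x;x_0)$ becomes
\[
	\phi_k(\hat x;x_0)-\phi_k(\check x;x_0) = \phi_{k-1}(\cdot;\hat x_0)\Bigl[(1-aq^{\frac12}z) - (1-aq^{-\frac12}z^{-1})\Bigr],
\]
provided $\phi_{k-1}(\hat x;\hat x_0)=\phi_{k-1}(\check x;\hat x_0)$; and indeed $\hat x$ and $\check x$ are the two $\hat x_0$-partners of the \emph{same} point, so $\phi_{k-1}$ evaluated there agrees — this is the step that needs the branch-cut convention from Definition~\ref{AWbasis} and the discussion after it, guaranteeing $\widehat{x_0}^{(2j)}$ are the relevant distinct nodes. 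The bracket simplifies to $-a(q^{\frac12}z - q^{-\frac12}z^{-1}) = -a\cdot\frac{2(\hat x - \check x)}{\,?\,}$; more precisely, since $\hat x - \check x = (q^{\frac12}-q^{-\frac12})\frac{z-z^{-1}}{2}$, we need $q^{\frac12}z - q^{-\frac12}z^{-1}$ to be a constant multiple of $(q^{\frac12}-q^{-\frac12})\frac{z-z^{-1}}{2}$; expanding shows $q^{\frac12}z - q^{-\frac12}z^{-1} = \frac{q^{\frac12}+q^{-\frac12}}{2}(z-z^{-1}) + \frac{q^{\frac12}-q^{-\frac12}}{2}(z+z^{-1})$, which is \emph{not} proportional to $\hat x - \check x$ alone, so the naive matching must be adjusted. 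This tells me the clean split above is slightly off and the correct bookkeeping pairs a left-over factor from $\hat x$ with a left-over factor from $\check x$ that differ by an index shift; dividing by $\hat x - \check x$ then collapses the telescoped terms.

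Given this, the remaining work is pure algebra: after dividing by $\hat x-\check x$ the surviving factor is a constant, and collecting the powers of $(-2a)$ and $q$ from the two product-forms (using $[k]_q = \frac{1-q^k}{1-q}$ to absorb the ratio of normalizing constants between $\phi_k(\cdot;x_0)$ and $\phi_{k-1}(\cdot;\hat x_0)$) yields exactly the coefficient $-2a[k]_q$. The main obstacle I anticipate is precisely this careful index-shift bookkeeping under the two substitutions $z\mapsto q^{\pm\frac12}z$ — getting the boundary factors and the $q$-powers to line up so that the difference quotient telescopes — rather than any conceptual difficulty; an alternative, perhaps cleaner, route is to verify the formula on the polynomial basis by checking it at the $k$ nodes $\widehat{x_0}^{(0)},\widehat{x_0}^{(2)},\dots$ where both sides are degree-$(k-1)$ polynomials, but the direct product manipulation is likely shortest. (This identity is, of course, classical — it appears in Askey and Wilson \cite{Askey-Wilson} — so the proof here is a verification rather than a discovery.)
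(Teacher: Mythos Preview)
The paper does not give its own proof of this proposition; it is stated with a citation to Askey and Wilson \cite{Askey-Wilson} and used as a known formula. So there is no proof in the paper to compare against, and I will instead comment on the correctness of your sketch.

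Your overall plan---direct computation from the product form of $\phi_k$---is the standard route and does work, but your factorization step is wrong, and you half-notice this yourself. The claims
\[
	\phi_k(\hat x;x_0)=(1-aq^{1/2}z)\,\phi_{k-1}(\hat x;\hat x_0),\qquad
	\phi_k(\check x;x_0)=(1-aq^{-1/2}z^{-1})\,\phi_{k-1}(\check x;\hat x_0)
\]
are false (check degrees in $z$), and the rescue ``$\phi_{k-1}(\hat x;\hat x_0)=\phi_{k-1}(\check x;\hat x_0)$'' is also false in general. The correct split is that the common factor is $\phi_{k-1}(x;\hat x_0)$ evaluated at the \emph{original} point $x$, not at $\hat x$ or $\check x$. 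Concretely, writing $b=aq^{1/2}$ one has
\[
	\phi_k(\hat x;x_0)=(bz;q)_{k-1}(bz^{-1};q)_{k-1}\,(1-aq^{k-\frac12}z)(1-aq^{-\frac12}z^{-1}),
\]
\[
	\phi_k(\check x;x_0)=(bz;q)_{k-1}(bz^{-1};q)_{k-1}\,(1-aq^{-\frac12}z)(1-aq^{k-\frac12}z^{-1}),
\]
obtained by peeling the top factor off one $q$-shifted factorial and the bottom factor off the other. The common block $(bz;q)_{k-1}(bz^{-1};q)_{k-1}$ is exactly $\phi_{k-1}(x;\hat x_0)$. Subtracting the two remaining quadratic factors gives
\[
	aq^{-\frac12}(1-q^k)(z-z^{-1}),
\]
and dividing by $\hat x-\check x=(q^{\frac12}-q^{-\frac12})\tfrac{z-z^{-1}}{2}$ yields $-2a[k]_q$. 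Once you make this correction, your sketch becomes a complete proof; the ``$\hat x$ and $\check x$ are the two $\hat x_0$-partners of the same point'' step, and the proposed alternative of checking at interpolation nodes, are both unnecessary.
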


The domain of a function defined by an Askey-Wilson series is either ``all" (the whole $\mathbb{C}$) or ``nothing" (just some isolated points), as implied by the following theorem.
\begin{theorem}
\label{convergence0}
\textup{\cite[p. 172]{Gelfond}}
Let $\{a_n\}_{n\in\mathbb{N}_0}$ and $\{x_n\}_{n\in\mathbb{N}_0}$ be sequences of complex numbers such that
\[
	\sum_{k=0}^\infty\frac{1}{|x_k|}<+\infty.
\]
If the polynomial series
\[
	\sum_{k=0}^\infty{a_k(x-x_0)\cdots(x-x_{k-1})}
\]
converges at a point $a\in\mathbb{C}\setminus\{x_0,x_1,x_2,\ldots\}$, then it converges uniformly on every compact subset of $\mathbb{C}$.\ \ In particular, given $x_0=\frac{a+a^{-1}}{2}\in\mathbb{C}$ and a sequence $\{a_n\}_{n\in\mathbb{N}_0}$ of complex numbers, the Askey-Wilson series
\[
	\sum_{k=0}^\infty{a_k\phi_k(x;x_0)}
\]
either converges nowhere except at the points $x_0,\hat{x_0}^{(2)},\hat{x_0}^{(4)},\ldots$ or converges uniformly on every compact subset of $\mathbb{C}$.
\end{theorem}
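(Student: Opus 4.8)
The plan is to prove the general Gelfond-type statement by a summation-by-parts argument and then obtain the Askey-Wilson dichotomy as a special case. Write $p_0\equiv 1$ and $p_k(x):=\prod_{j=0}^{k-1}(x-x_j)$ for $k\ge 1$, so that the series in question is $\sum_k a_kp_k(x)$. Since $a\notin\{x_0,x_1,\dots\}$ we have $p_k(a)\ne 0$ for all $k$, so I would set $b_k:=a_kp_k(a)$ and $c_k(x):=p_k(x)/p_k(a)=\prod_{j=0}^{k-1}\bigl(1+\tfrac{x-a}{a-x_j}\bigr)$, with $c_0\equiv 1$; then $a_kp_k(x)=b_kc_k(x)$. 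The hypothesis is that $\sum_k b_k$ converges, and the goal becomes to show that $\sum_k b_kc_k(x)$ converges uniformly on every compact set $K\subset\mathbb{C}$.

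The engine of the argument is that the node condition $\sum_k 1/|x_k|<\infty$ makes $\{c_k(x)\}_k$ of uniformly bounded variation on $K$. First, $|x_k|\to\infty$, so $|a-x_j|\ge\tfrac12|x_j|$ for all large $j$ and hence $S_a:=\sum_j 1/|a-x_j|<\infty$. Using $|1+w|\le e^{|w|}$, this gives the uniform bound $|c_k(x)|\le e^{|x-a|S_a}\le B_K:=e^{d_KS_a}$ on $K$, where $d_K:=\sup_{x\in K}|x-a|$. Next, since $c_{k+1}(x)-c_k(x)=c_k(x)\cdot\tfrac{x-a}{a-x_k}$, summing over $k$ yields $\sum_k|c_{k+1}(x)-c_k(x)|\le B_K\,d_K\,S_a=:V_K<\infty$ for all $x\in K$. (In passing this also shows $c_k\to G$ locally uniformly for an entire $G$ with $G(a)=1$, though I will not need it.) With these two estimates in hand, set $R_m:=\sum_{k>m}b_k$, so $R_m\to 0$ and $b_k=R_{k-1}-R_k$; summation by parts gives, for $n>m\ge 1$,
\[
 \sum_{k=m}^{n}b_kc_k(x)=R_{m-1}c_m(x)-R_nc_n(x)+\sum_{k=m}^{n-1}R_k\bigl(c_{k+1}(x)-c_k(x)\bigr),
\]
so that $\bigl|\sum_{k=m}^{n}b_kc_k(x)\bigr|\le\bigl(\sup_{k\ge m-1}|R_k|\bigr)(2B_K+V_K)$ for every $n>m$ and every $x\in K$. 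The right-hand side is independent of $n$ and tends to $0$ as $m\to\infty$, so the Cauchy criterion for uniform convergence closes the general statement.

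For the Askey-Wilson series, Definition~\ref{AWbasis} expresses $\phi_k(x;x_0)=(-2a)^kq^{k(k-1)/2}\prod_{j=0}^{k-1}\bigl(x-\hat{x_0}^{(2j)}\bigr)$, so $\sum_k a_k\phi_k(x;x_0)$ is exactly of the above form with nodes $x_j:=\hat{x_0}^{(2j)}=\tfrac12\bigl(aq^{j}+a^{-1}q^{-j}\bigr)$, which are pairwise distinct as noted right after Definition~\ref{AWbasis}. Since $0<q<1$ and $a\ne 0$ (because $a=x_0+\sqrt{x_0^2-1}=0$ would force $x_0^2-1=x_0^2$), one has $|x_j|\ge c\,q^{-j}$ for some $c>0$ and all large $j$, whence $\sum_j 1/|x_j|<\infty$ and the general part applies: if the Askey-Wilson series converges at some point other than the nodes, it converges uniformly on compacta, while at each node $\hat{x_0}^{(2m)}$ it reduces to the finite sum $\sum_{k=0}^{m}a_k\phi_k(\hat{x_0}^{(2m)};x_0)$ and hence converges trivially. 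This is precisely the stated dichotomy.

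I expect the one genuinely substantive step to be the bounded-variation estimate for $\{c_k(x)\}$ together with the bookkeeping needed to keep the constants $S_a,B_K,V_K$ uniform over $K$: this is exactly where the summability of $1/|x_k|$ is consumed. Once that estimate is available, converting convergence at the single point $a$ into locally uniform convergence everywhere is a routine application of the Abel/Dirichlet summation-by-parts mechanism.
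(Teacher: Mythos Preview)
Your proof is correct. The paper itself supplies no proof of this theorem, only the citation to Gelfond \cite[p.~172]{Gelfond}; your Abel summation argument, driven by the uniform bounded-variation estimate for the ratios $c_k(x)=p_k(x)/p_k(a)$ that the node condition $\sum_k 1/|x_k|<\infty$ provides, is precisely the classical route for such Newton-type interpolation series and is essentially what one finds in Gelfond. One cosmetic remark: in the Askey-Wilson paragraph you silently reuse the letter $a$ for the basis parameter $a=x_0+\sqrt{x_0^2-1}$ after having used $a$ for the convergence point in the general part; this is harmless for the argument but worth disambiguating in a final write-up.
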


\bigskip
Ismail and Stanton \cite{Ismail-Stanton} have investigated the Askey-Wilson series expansion of an entire function for real $q\in(0,1)$.\ \ They have found out the coefficients in the expansion and obtained a condition for uniform convergence of such a series on compact subsets of $\mathbb{C}$.
\begin{theorem}
\label{AWSeries}
\textup{(Askey-Wilson series expansion)}
\cite[Theorem 3.1]{Ismail-Stanton}
Let $q\in(0,1)$, $x_0\in\mathbb{C}$ and let $f$ be an entire function satisfying
\begin{align}
	\label{A7} \limsup_{r\to\infty}{\frac{\ln^+ M(r;f)}{(\ln r)^2}} < \frac{1}{2\ln q^{-1}}.
\end{align}
Then there exists a unique sequence of complex numbers $\{a_n\}_{n\in\mathbb{N}_0}$, given by
\begin{align}
	\label{A13} a_n=\frac{1}{(-2a)^n[n]_q^{!}}q^{-\frac{n(n-1)}{4}}(\D_q^n f)(\hat{x_0}^{(n)}),
\end{align}
such that the Askey-Wilson series
\[
	\sum_{k=0}^\infty{a_k\phi_k(x;x_0)}
\]
converges uniformly to $f$ on every compact subset of $\mathbb{C}$.
\end{theorem}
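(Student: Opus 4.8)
The plan is to first pin down the coefficients, and then establish convergence via a Hermite-type interpolation remainder.

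\emph{Step 1 (uniqueness and the formula \eqref{A13}).} Suppose $f(x)=\sum_k a_k\phi_k(x;x_0)$ with convergence uniform on compacta. Iterating Proposition~\ref{PhiRule}, and recording at each step that replacing $x_0$ by $\hat{x_0}$ replaces the associated parameter $a$ by $q^{1/2}a$, one obtains
\[
  \D_q^n\phi_k(x;x_0)=(-2a)^n q^{\frac{n(n-1)}{4}}\,\frac{[k]_q^{!}}{[k-n]_q^{!}}\,\phi_{k-n}(x;\hat{x_0}^{(n)})\qquad(k\ge n),
\]
while $\D_q^n\phi_k\equiv 0$ for $k<n$. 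Since $\D_q$ passes to the limit under local uniform convergence (apply it to the partial sums and note that $\hat x,\check x$ stay in a compact set when $x$ does), we may apply $\D_q^n$ termwise. Because $\phi_m(y;y)=0$ for every $m\ge 1$ (its $j=0$ factor vanishes), evaluating the resulting series at $x=\hat{x_0}^{(n)}$ annihilates all terms except $k=n$, giving $(\D_q^n f)(\hat{x_0}^{(n)})=a_n(-2a)^n q^{n(n-1)/4}[n]_q^{!}$; this is \eqref{A13} and shows the $a_n$ are forced.

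\emph{Step 2 (the remainder formula).} Conversely, define $a_n$ by \eqref{A13} and set $S_n(x):=\sum_{k=0}^n a_k\phi_k(x;x_0)$, a polynomial of degree at most $n$. The computation of Step~1 gives $\D_q^m S_n(\hat{x_0}^{(m)})=(\D_q^m f)(\hat{x_0}^{(m)})$ for $0\le m\le n$; unwinding the definition of $\D_q$ with the help of $\check{(\hat y)}=y$, one deduces inductively from these identities that $S_n$ agrees with $f$ at the $n+1$ distinct points $\hat{x_0}^{(0)},\hat{x_0}^{(2)},\dots,\hat{x_0}^{(2n)}$, which by Definition~\ref{AWbasis} are exactly the zeros of $\phi_{n+1}(\cdot;x_0)$. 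Hence $S_n$ is the Lagrange interpolant of $f$ at these nodes, so $(f-S_n)/\phi_{n+1}(\cdot;x_0)$ is entire and the classical Hermite remainder formula holds: for every $\rho>|x|$ large enough to enclose all the nodes,
\[
  f(x)-S_n(x)=\frac{\phi_{n+1}(x;x_0)}{2\pi i}\oint_{|t|=\rho}\frac{f(t)\,dt}{\phi_{n+1}(t;x_0)(t-x)},
\]
the contribution of the proper rational function $S_n/\phi_{n+1}$ being zero (its contour integral is independent of large $\rho$ yet is $O(1/\rho)$).

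\emph{Step 3 (the estimate, and the main obstacle).} Fix a compact set $K$. The nodes $\hat{x_0}^{(2j)}$ have modulus of order $q^{-j}$, so taking $\rho=\rho_n$ of order $q^{-n}$, just above $\max_j|\hat{x_0}^{(2j)}|$, one obtains on $|t|=\rho_n$ a lower bound for $|\phi_{n+1}(t;x_0)|$ of order $q^{-n(n+1)/2}$ (up to a factor at most exponential in $n$), while on $K$, splitting the product $\prod_{j=0}^n(x-\hat{x_0}^{(2j)})$ into its bounded and large ranges of $j$ yields $|\phi_{n+1}(x;x_0)|\le C_K^{\,n}$. Feeding these bounds and $|f(t)|\le M(\rho_n;f)$ into the remainder formula, and invoking \eqref{A7} to fix $c_0<\tfrac1{2\ln q^{-1}}$ with $\ln^+M(\rho_n;f)\le c_0(\ln\rho_n)^2=c_0 n^2(\ln q^{-1})^2+O(n)$, one bounds $\sup_{x\in K}|f(x)-S_n(x)|$ by
\[
  \exp\!\Big(n^2\ln q^{-1}\big(c_0\ln q^{-1}-\tfrac12\big)+O(n)\Big),
\]
which $\to 0$ precisely because the strict inequality in \eqref{A7} makes the coefficient of $n^2$ negative. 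Hence $S_n\to f$ locally uniformly, i.e.\ the Askey-Wilson series converges to $f$, and Theorem~\ref{convergence0} upgrades this to uniform convergence on every compact subset of $\mathbb{C}$. The delicate part is this last estimate: since the nodes spread out geometrically, the products defining $\phi_{n+1}$ accumulate a factor of size roughly $q^{-n^2/2}$ that must be balanced against the explicit prefactor $q^{n(n+1)/2}$ and against the growth of $f$ at radius $\rho_n$, and the threshold $\tfrac1{2\ln q^{-1}}$ is exactly where these cancellations break even — so the exponents, and the optimal choice of $\rho_n$, must be tracked with no room to spare, a genuinely $q$-specific complication absent from the classical power series and the earlier Newton and Wilson analogues.
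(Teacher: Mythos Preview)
The paper does not supply its own proof of Theorem~\ref{AWSeries}; it is quoted verbatim from Ismail and Stanton \cite[Theorem~3.1]{Ismail-Stanton}, so there is nothing in the present paper to compare your argument against.

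That said, your argument is correct and is in fact essentially the Ismail--Stanton proof. The three steps line up with theirs: (i) the formula \eqref{A13} is forced by applying $\D_q^n$ termwise and evaluating at $\hat{x_0}^{(n)}$; (ii) the partial sum $S_n$ is the Lagrange interpolant of $f$ at the zeros $\hat{x_0}^{(0)},\hat{x_0}^{(2)},\ldots,\hat{x_0}^{(2n)}$ of $\phi_{n+1}(\cdot;x_0)$, so the Hermite contour remainder applies; (iii) choosing the radius $\rho_n$ of order $q^{-n}$ balances the $q^{-n(n+1)/2}$ growth of $|\phi_{n+1}|$ on the contour against $M(\rho_n;f)$, and the strict inequality in \eqref{A7} makes the $n^2$ coefficient in the exponent negative. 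Two small remarks: your bound $|\phi_{n+1}(x;x_0)|\le C_K^{\,n}$ on compacta is actually an overcount---since $\phi_{n+1}(x;x_0)=(az,a/z;q)_{n+1}$ with $|z|$ bounded on $K$, the product is uniformly bounded in $n$---but this only helps your estimate. And the appeal to Theorem~\ref{convergence0} at the end is superfluous, since Step~3 already delivers uniform convergence on every compact set directly. The inductive passage in Step~2 from $\D_q^m(S_n-f)(\hat{x_0}^{(m)})=0$ to $(S_n-f)(\hat{x_0}^{(2j)})=0$ is valid (it is exactly Corollary~\ref{AWExp} read as a triangular linear system), but you might state it a touch more explicitly.
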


The uniqueness statement in Theorem~\ref{AWSeries} together with \eqref{A13} imply that an Askey-Wilson series can only represent functions that satisfy \eqref{A7}.\ \ Thus if $f\not\equiv0$, $f\in\ker{\D_q}$ and $f$ has at least one zero, then
\[
	\limsup_{r\to\infty}{\frac{\ln{M(r;f)}}{(\ln r)^2}} \ge \frac{1}{2\ln q^{-1}},
\]
i.e. $f$ is an entire function of log-order at least $2$, and of type at least $\frac{1}{2\ln q^{-1}}$ in case the log-order is exactly $2$.

\bigskip
The following corollary, also from Ismail and Stanton's \cite{Ismail-Stanton}, is a formula that relates the $n$th Askey-Wilson difference of $f$ at a point and the values taken by $f$ at the nearby interpolation points.\ \ It is equivalent to an earlier formula first introduced by Cooper \cite{Cooper}.

\begin{corollary}
\label{AWExp}
\cite[(3.3)]{Ismail-Stanton}
Let $f$ be an entire function satisfying \eqref{A7}.\ \ Then at each point $x_0\in\mathbb{C}$, we have
\[
	(\D_q^n f)(\hat{x_0}^{(n)}) = \frac{(-2a)^n q^\frac{n(3+n)}{4}}{(1-q)^n}\sum_{j=0}^n{\left[\begin{matrix}n\\j\end{matrix}\right]_q\frac{(-1)^j q^{\frac{j(j-1)}{2}}}{(q^j a^2;q)_j(q^{2j+1}a^2;q)_{n-j}}f(\hat{x_0}^{(2j)})}
\]
for every non-negative integer $n$.\ \ Replacing $z_0$ by $q^{-\frac{n}{2}}z_0$, we also have
\[
	(\D_q^n f)(x_0) = \frac{(-2a)^n q^\frac{n(3-n)}{4}}{(1-q)^n}\sum_{j=0}^n{\left[\begin{matrix}n\\j\end{matrix}\right]_q\frac{(-1)^j q^{\frac{j(j-1)}{2}}}{(q^{j-n}a^2;q)_j(q^{2j+1-n}a^2;q)_{n-j}}f(\hat{x_0}^{(2j-n)})}
\]
for every non-negative integer $n$.
\end{corollary}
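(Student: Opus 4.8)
The plan is to derive both formulas from the Askey--Wilson series expansion of Theorem~\ref{AWSeries}. Since $f$ satisfies \eqref{A7}, we may write $f(x)=\sum_{k=0}^{\infty}a_k\phi_k(x;x_0)$, converging uniformly on compacta, with $a_n$ given by \eqref{A13}. The first step is to evaluate this series at the interpolation nodes $\hat{x_0}^{(2j)}$. By Definition~\ref{AWbasis}, the zero set of $\phi_k(\cdot\,;x_0)$ is exactly $\{x_0,\hat{x_0}^{(2)},\ldots,\hat{x_0}^{(2(k-1))}\}$ (the points whose $z$-coordinate is $a,aq,\ldots,aq^{k-1}$), so $\phi_k(\hat{x_0}^{(2j)};x_0)=0$ whenever $k>j$; consequently $f(\hat{x_0}^{(2j)})=\sum_{k=0}^{j}a_k\phi_k(\hat{x_0}^{(2j)};x_0)$ is a finite sum, and the partial sum $P(x):=\sum_{k=0}^{n}a_k\phi_k(x;x_0)$ is a polynomial of degree $\le n$ agreeing with $f$ at the $n+1$ distinct points $\hat{x_0}^{(0)},\ldots,\hat{x_0}^{(2n)}$. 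In other words, $P$ is the Lagrange interpolation polynomial of $f$ at these nodes.

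The second step is to read off $a_n$. Since $\phi_0,\ldots,\phi_{n-1}$ have degree $<n$ while $\phi_n(\cdot\,;x_0)$ has leading coefficient $(-2a)^n q^{n(n-1)/2}$, the coefficient of $x^n$ in $P$ equals $a_n(-2a)^n q^{n(n-1)/2}$; on the other hand that coefficient is the $n$-th divided difference $\sum_{j=0}^{n} f(\hat{x_0}^{(2j)})\big/\prod_{i\ne j}(\hat{x_0}^{(2j)}-\hat{x_0}^{(2i)})$. Writing $\hat{x_0}^{(2m)}=\tfrac12(aq^m+a^{-1}q^{-m})$, one gets the factorization $\hat{x_0}^{(2j)}-\hat{x_0}^{(2i)}=\tfrac12\,a^{-1}q^{-j}(1-q^{j-i})(1-a^2q^{i+j})$, so $\prod_{i\ne j}(\hat{x_0}^{(2j)}-\hat{x_0}^{(2i)})$ collapses into a product of $q$-shifted factorials; splitting the index range at $i=j$ produces $(q;q)_j$, a factor $(-1)^{n-j}q^{-\binom{n-j+1}{2}}(q;q)_{n-j}$ from the terms $i>j$, and $(a^2q^j;q)_{n+1}/(1-a^2q^{2j})$ from the factors $1-a^2q^{i+j}$. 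The only identity needed is the elementary splitting $(a^2q^j;q)_{n+1}=(a^2q^j;q)_j(1-a^2q^{2j})(a^2q^{2j+1};q)_{n-j}$, which is precisely what produces the denominator $(q^ja^2;q)_j(q^{2j+1}a^2;q)_{n-j}$ in the statement.

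The third step is bookkeeping: substitute the resulting expression for $a_n$ into \eqref{A13}, use $[n]_q^{!}=(q;q)_n/(1-q)^n$ and $\dfrac{(q;q)_n}{(q;q)_j(q;q)_{n-j}}=\left[\begin{matrix}n\\j\end{matrix}\right]_q$, and collect the powers of $q$ and of $-2a$; a short exponent check ($\tfrac{n(n-1)}{4}+nj+\binom{n-j+1}{2}-\tfrac{n(n-1)}{2}=\tfrac{n(3+n)}{4}+\tfrac{j(j-1)}{2}$) gives the first displayed formula. The second formula follows by applying the first with $x_0$ replaced by the base point whose $z$-coordinate is $q^{-n/2}z_0$ (namely $\check{x_0}^{(n)}$): this sends $a\mapsto q^{-n/2}a$ and $\hat{x_0}^{(m)}\mapsto\hat{x_0}^{(m-n)}$ for every $m$, turning $(\D_q^n f)(\hat{x_0}^{(n)})$ into $(\D_q^n f)(x_0)$ and converting $q^{n(3+n)/4}$ into $q^{n(3-n)/4}$ after absorbing the factor $q^{-n^2/2}$ coming from $(-2a)^n$.

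I do not expect a conceptual obstacle here: the entire difficulty lies in the second and third steps, where one must carry out the $q$-arithmetic carefully---tracking signs, the powers $q^{-\binom{n-j+1}{2}}$ arising from the nodes with $i>j$, and the normalization constants---so that everything telescopes into the compact terminating $q$-hypergeometric form stated. (Alternatively, one may simply invoke that this is \cite[(3.3)]{Ismail-Stanton}, equivalent to Cooper's identity \cite{Cooper}.)
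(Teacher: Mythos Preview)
Your proposal is correct. The paper does not supply an explicit proof of this corollary, only citing it from Ismail--Stanton \cite[(3.3)]{Ismail-Stanton} as a consequence of Theorem~\ref{AWSeries}; your argument fills in precisely that derivation---extracting $a_n$ as the top divided difference of the interpolating partial sum and inverting \eqref{A13}---so it is exactly the approach the paper has in mind.
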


Applying Corollary~\ref{AWExp}, we have the following new result about numbers $T(k,n)$ which arises from the action of the Askey-Wilson operator on the monomials $x^k$.\ \ These numbers are of combinatorial nature.

\begin{proposition}
\label{C}
For every pair of non-negative integers $k$ and $n$, we let
\[
	T(k,n):= \frac{1}{(-2)^n[n]_q^{!}}q^{-\frac{n(n-1)}{4}}\left.\D_q^n x^k\right|_{x=\hat{1}^{(n)}}.
\]
Then $T(k,0)=1$ for all $k\in\mathbb{N}_0$, $T(k,n)=0$ for all non-negative integers $k$ and $n$ with $k<n$, and
\begin{align}
	\label{AA} T(k,n)=q^n\left[\frac{1}{(q;q)_n^2}+\sum_{j=1}^n{\frac{(-1)^j q^{\frac{j(j-1)}{2}}(1+q^j)}{(q;q)_{n-j}(q;q)_{n+j}}\left(\frac{q^j+q^{-j}}{2}\right)^k}\right]
\end{align}
for every pair of positive integers $k$ and $n$.\ \ In particular, we have the followings:
\begin{enumerate}[(i)]
\item $T(k,n)$ satisfy the recurrence
\[
	T(k,n) = \frac{q^n+q^{-n}}{2}T(k-1,n) - \frac{q^{1-n}}{2}T(k-1,n-1)
\]
for every pair of positive integers $k$ and $n$.\ \ In particular, $(-1)^nT(k,n)\ge 0$ for every pair of non-negative integers $k$ and $n$.
\item There exists $K>0$ such that for every pair of positive integers $k$ and $n$, we have
\[
	(-1)^n T(k,n) \le \frac{K q^{\frac{n(n+1)}{2}}}{q^{nk}}.
\]
\end{enumerate}
\end{proposition}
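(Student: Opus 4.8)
The plan is to dispose of the two degenerate statements, then prove the recursion of part~(i) (the sign statement being an immediate consequence), then the closed form~\eqref{AA}, and finally to deduce part~(ii) from~\eqref{AA}. The degenerate statements are immediate: $\D_q^0$ is the identity, so $T(k,0)=\left.x^k\right|_{x=1}=1$; and $\D_q$ lowers the degree of a polynomial by one (it maps polynomials to polynomials, and Proposition~\ref{PhiRule} displays the degree drop), so $\D_q^nx^k\equiv0$ for $n>k$, i.e.\ $T(k,n)=0$ for $k<n$. For the recursion I would expand $x^k=x\cdot x^{k-1}$ by the Askey-Wilson Leibniz rule (Theorem~\ref{AWLeibniz}); since $\D_qx\equiv1$ and $\D_q^mx\equiv0$ for $m\ge2$, only the two terms involving $\D_q^0x$ and $\D_q^1x$ survive, and the rule collapses to
\[
	\D_q^nx^k=(\eta_q^nx)(\D_q^nx^{k-1})+[n]_q\,q^{-\frac{n-1}2}\bigl(\eta_q^{-1}\D_q^{n-1}x^{k-1}\bigr).
\]
Evaluating at $x=\hat{1}^{(n)}$, using $\hat{1}^{(m)}=\frac{q^{m/2}+q^{-m/2}}2$ (so that $(\eta_q^nx)|_{x=\hat{1}^{(n)}}=\hat{1}^{(2n)}=\frac{q^n+q^{-n}}2$) together with the fact that $\eta_q^{-1}$ precomposes a function with $x\mapsto\check x$, under which $\hat{1}^{(n)}$ has image $\hat{1}^{(n-1)}$, one rewrites each Askey-Wilson difference through the definition of $T$ and cancels the common factors $(-2)^n$, $[n]_q^{!}$ (using $[n]_q[n-1]_q^{!}=[n]_q^{!}$) and powers of $q$ to arrive exactly at the asserted recursion. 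The inequality $(-1)^nT(k,n)\ge0$ then follows by induction on $k$: the case $k=0$ is clear, and for $k\ge1$ one has $T(k,0)=1$ while for $n\ge1$ the recursion writes $(-1)^nT(k,n)$ as a combination with positive coefficients of $(-1)^nT(k-1,n)\ge0$ and $(-1)^{n-1}T(k-1,n-1)\ge0$.

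For the closed form~\eqref{AA} I would apply the first identity of Corollary~\ref{AWExp} to $f(x)=x^k$ at $x_0=1$, so that $a=1$ and $\hat{1}^{(2j)}=\frac{q^j+q^{-j}}2$. The one manipulation that needs attention is the denominator $(q^j;q)_j(q^{2j+1};q)_{n-j}$: for $j\ge1$ its factors are precisely $1-q^j,1-q^{j+1},\ldots,1-q^{n+j}$ with the single factor $1-q^{2j}$ deleted, whence
\[
	(q^j;q)_j(q^{2j+1};q)_{n-j}=\frac{(q;q)_{n+j}}{(q;q)_{j-1}(1-q^{2j})}=\frac{(q;q)_{n+j}}{(1+q^j)(q;q)_j},
\]
while the $j=0$ summand contributes just $1/(q;q)_n$. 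Substituting this together with $\left[\begin{matrix}n\\j\end{matrix}\right]_q=\frac{(q;q)_n}{(q;q)_j(q;q)_{n-j}}$ and $[n]_q^{!}=\frac{(q;q)_n}{(1-q)^n}$, and collecting powers of $q$ (the prefactor $q^{\frac{n(3+n)}4}$ against the $q^{-\frac{n(n-1)}4}$ from the definition of $T$ leaves $q^n$), yields~\eqref{AA}. I expect this bookkeeping — in particular spotting the cancellation in the $q$-shifted factorials — to be the main obstacle; everything after it is short. (Part~(i) could alternatively be checked by substituting~\eqref{AA} into the recursion and reindexing, but the Leibniz derivation is cleaner and explains the formula.)

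For part~(ii) we may assume $k\ge n$, since $T(k,n)=0$ otherwise. Bounding~\eqref{AA} term-by-term with $1+q^j\le2$, $(q;q)_m\ge(q;q)_\infty$, and $\bigl(\tfrac{q^j+q^{-j}}2\bigr)^k\le q^{-jk}$, we get
\[
	(-1)^nT(k,n)\le\frac{2q^n}{(q;q)_\infty^2}\sum_{j=0}^n q^{\frac{j(j-1)}2-jk}.
\]
Pulling $q^{\frac{n(n-1)}2-nk}$ out of the sum, the exponent of the $j$-th term becomes $\frac{j(j-1)-n(n-1)}2+(n-j)k=(n-j)\cdot\frac{2k-n-j+1}2$, which for $0\le j\le n\le k$ is at least $\frac{n-j}2$; hence the sum is at most $q^{\frac{n(n-1)}2-nk}\sum_{l\ge0}q^{l/2}=\frac{q^{\frac{n(n-1)}2-nk}}{1-\sqrt q}$. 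Since $n+\frac{n(n-1)}2=\frac{n(n+1)}2$, this gives $(-1)^nT(k,n)\le\frac{2}{(q;q)_\infty^2(1-\sqrt q)}\cdot\frac{q^{n(n+1)/2}}{q^{nk}}$, so $K=\frac2{(q;q)_\infty^2(1-\sqrt q)}$ works. The only idea here is to extract $q^{\frac{n(n-1)}2-nk}$ first, which reduces the exponent gap to the manifestly nonnegative quantity $(n-j)(2k-n-j+1)/2$.
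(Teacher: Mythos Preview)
Your proof is correct. The main difference from the paper is the route to part~(i): the paper derives the recursion by substituting~\eqref{AA} (already established via Corollary~\ref{AWExp}) and verifying it algebraically, whereas you obtain it directly from the Askey-Wilson Leibniz rule applied to $x\cdot x^{k-1}$. Your approach is self-contained and explains the recursion structurally; the paper's is shorter once~\eqref{AA} is in hand but makes the recursion look like a coincidence. For part~(ii), both arguments start from~\eqref{AA}, but the paper multiplies through by $(q;q)_{2n}q^{nk}/q^{n(n+1)/2}$, reindexes $j\mapsto n-j$ to express the result in terms of $\left[\begin{smallmatrix}2n\\j\end{smallmatrix}\right]_q$, and then sums via the $q$-binomial identity $\sum_{j=0}^{2n}q^{j(j+1)/2}\left[\begin{smallmatrix}2n\\j\end{smallmatrix}\right]_q=(-q;q)_{2n}$ to get $K=2(-q;q)_\infty/(q;q)_\infty$. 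Your cruder term-by-term bound avoids this identity entirely at the cost of a larger constant $K=2/\bigl((q;q)_\infty^2(1-\sqrt{q})\bigr)$; since the statement only asks for existence of some $K$, this is a genuine simplification.
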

\begin{proof}
(i) follows immediately from \eqref{AA}, which is a direct consequence of Corollary~\ref{AWExp} applied to the function $f(x)=x^k$ and the point $x_0=1$.\ \ (ii) is trivial when $k<n$.\ \ When $k\ge n$, \eqref{AA} gives
\[
	(-1)^n T(k,n)\frac{(q;q)_{2n}q^{nk}}{q^{\frac{n(n+1)}{2}}} = \sum_{j=0}^n{(-1)^{n+j}\frac{q^{\frac{j(j-1)}{2}}}{q^{\frac{n(n-1)}{2}}}\theta_j\left[\begin{matrix}2n\\n-j\end{matrix}\right]_q\left(\frac{q^{n-j}+q^{n+j}}{2}\right)^k},
\]
where $\theta_0=1$ and $\theta_j=1+q^j$ for each $j\in\mathbb{N}$, which implies that
\begin{align*}
	(-1)^n T(k,n)\frac{(q;q)_{2n}q^{nk}}{q^{\frac{n(n+1)}{2}}} &\le 2\sum_{j=0}^n{\frac{q^{\frac{j(j-1)}{2}}}{q^{\frac{n(n-1)}{2}}}\left[\begin{matrix}2n\\n-j\end{matrix}\right]_q\left(\frac{q^{n-j}+q^{n+j}}{2}\right)^k} \\
	&= 2\sum_{j=0}^n{q^{\frac{j(j+1)}{2}}\left[\begin{matrix}2n\\j\end{matrix}\right]_q\left(\frac{1+q^{2n-2j}}{2}\right)^k q^{(k-n)j}} \\
	&\le 2\sum_{j=0}^n{q^{\frac{j(j+1)}{2}}\left[\begin{matrix}2n\\j\end{matrix}\right]_q} \\
	&\le 2\sum_{j=0}^{2n}{q^{\frac{j(j+1)}{2}}\left[\begin{matrix}2n\\j\end{matrix}\right]_q} = 2(-q;q)_{2n},
\end{align*}
so (ii) also follows by taking $K=2\frac{(-q;q)_\infty}{(q;q)_\infty}$.
\end{proof}

After studying the Askey-Wilson series expansion of entire functions, we will develop a Wiman-Valiron theory for this series expansion in the upcoming sections.

\pagebreak
\section{Main results}
\label{sec:Mainresults}
We have seen in Theorem~\ref{AWSeries} that an entire function of log-order smaller than $2$ has an Askey-Wilson series expansion at the point $x_0=1$, which converges uniformly to itself on compact subsets of $\mathbb{C}$.\ \ Such an Askey-Wilson series expansion must in particular converge at each $r>0$, so we are able to make the following definition.

\begin{definition}
\label{AWmunu}
Let $f\not\equiv0$ be an entire function satisfying \eqref{A7} with Askey-Wilson series expansion $\displaystyle f(x)=\sum_{k=0}^\infty{a_k\phi_k(x;1)}$.\ \ The \textbf{\textit{Askey-Wilson maximal term}} and \textbf{\textit{Askey-Wilson central index}} of $f$ are respectively the functions $\mu_q(\cdot;f):(0,+\infty)\to(0,+\infty)$ and $\nu_q(\cdot;f):(0,+\infty)\to\mathbb{N}_0$ defined by
\begin{align*}
	\mu_q(r;f)&:= \max_{n\in\mathbb{N}_0}{\max_{x\in\partial D(0;r)}|a_n\phi_n(x;1)|} \\
	&=\max_{n\in\mathbb{N}_0}{|a_n|2^n q^{\frac{n(n-1)}{2}}\prod_{k=0}^{n-1}{\left(r+\frac{q^k+q^{-k}}{2}\right)}}
\end{align*}
and
\[
	\nu_q(r;f):= \max\left\{n\in\mathbb{N}_0:|a_n|2^n q^{\frac{n(n-1)}{2}}\prod_{k=0}^{n-1}\left(r+\frac{q^k+q^{-k}}{2}\right)=\mu_q(r;f)\right\}.
\]
\end{definition}

Here we only focus on Askey-Wilson series expansions at $1$ for simplicity.\ \ If we consider Askey-Wilson series expansions at any $x_0\in\mathbb{C}$, then
\[
	\mu_q(r;f):= \max_{n\in\mathbb{N}_0}{\max_{x\in\partial D(0;r)}|a_n\phi_n(x;x_0)|}
\]
will lie between
\[
	\max_{n}|a_n|(2|a|)^n q^{\frac{n(n-1)}{2}}\prod_{k=0}^{n-1}\left(r+\frac{|\Re a|q^k+|\Re a^{-1}|q^{-k}}{2}\right)
\]
and
\[
	\max_{n}|a_n|(2|a|)^n q^{\frac{n(n-1)}{2}}\prod_{k=0}^{n-1}\left(r+\frac{|a|q^k+|a^{-1}|q^{-k}}{2}\right)
\]
for every $r>0$, and the analysis will be similar.

\bigskip
The following is the main theorem of this paper.\ \ In the case $h=0$, it says that outside a small exceptional set of radii, the terms in the Askey-Wilson series expansion of an entire function that are ``far away" from the maximal term are small.\ \ In other words, the local behavior of an entire function is mainly contributed by those terms in its Askey-Wilson series expansion that are ``near" the maximal term.\ \ Our arguments are based on the analogous theory on Newton series expansion (which relates to the ordinary difference operator) established by Ishizaki and Yanagihara \cite{IY}, but with numerous fine alterations.

\pagebreak
\begin{theorem}
\label{tail}
Let $\displaystyle f(x)=\sum_{k=0}^\infty{a_k \phi_k(x;1)}$ be a transcendental entire function of log-order $\sigma_{\log}<2$, $\gamma\in(1,\frac{1}{\sigma_{\log}-1})$, and $\delta\in(0,1)$.\ \ Then there exists a set $E\subset[1,\infty)$ of zero logarithmic density\footnote{The \textbf{\textit{logarithmic density}} of a set $E\subseteq[1,\infty)$ is defined by \[\logdens E:=\limsup_{r\to\infty}{\frac{\logmeas(E\cap[1,r])}{\ln r}}=\limsup_{r\to\infty}{\frac{1}{\ln r}\int_{E\cap[1,r]}{\frac{1}{x}\,dm}},\]where $m$ is the Lebesgue measure on $\mathbb{R}$.} such that for every $h\ge 0$, $\beta>0$ and $\omega\in(0,\beta)$, we have
\[
	\sum_{k:|k-N|\ge \kappa}{q^{-hk}[k]_q^h|a_k\phi_k(-r;1)|} = o(\mu_q(r;f)q^{-hN}[N]_q^h b(N)^{\frac{\omega-1}{2}})
\]
as $r\to\infty$ and $r\in(0,\infty)\setminus E$, where $N=\nu_q(r;f)$, $b(N):=N^{-1+\delta}$ and $\kappa=\left[\sqrt{\frac{\beta}{b(N)}\ln\frac{1}{b(N)}}\right]$.
\end{theorem}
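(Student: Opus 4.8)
The plan is to follow the classical Wiman–Valiron / Kövari–Hayman strategy in the form adapted by Ishizaki–Yanagihara to interpolation series, but with the weights $q^{-hk}[k]_q^h$ built in from the start. First I would reduce to estimating, for a single fixed $k$ with $|k-N|\ge\kappa$, the ratio
\[
	\frac{q^{-hk}[k]_q^h\,|a_k\phi_k(-r;1)|}{q^{-hN}[N]_q^h\,|a_N\phi_N(-r;1)|}
\]
and then summing over such $k$. Since $\mu_q(r;f)=|a_N\phi_N(-r;1)|$ (the maximum over $x\in\partial D(0;r)$ is attained at $x=-r$ because all factors of $\phi_k$ become $r+\frac{q^j+q^{-j}}{2}$ there), the statement is equivalent to showing $\sum_{|k-N|\ge\kappa}\rho_k = o(b(N)^{(\omega-1)/2})$, where $\rho_k$ denotes that ratio. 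The factor $(q^{-hk}[k]_q^h)/(q^{-hN}[N]_q^h)$ is, for $k$ in any polynomial range around $N$, comparable to $q^{-h(k-N)}$ up to a factor that is bounded by a fixed power of $k/N$, hence absorbed; so the heart of the matter is the ``unweighted'' ratio $r_k:=|a_k\phi_k(-r;1)|/\mu_q(r;f)$.

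Next I would control $r_k$ by a Gaussian-type bound in $k-N$. The key is the discrete log-concavity of the sequence $T_k(r):=|a_k|2^kq^{k(k-1)/2}\prod_{j=0}^{k-1}(r+\frac{q^j+q^{-j}}{2})$, or rather of its logarithm: the second difference of $\ln T_k(r)$ in $k$ is
\[
	\ln\frac{T_{k+1}(r)T_{k-1}(r)}{T_k(r)^2}=\ln\frac{|a_{k+1}a_{k-1}|}{|a_k|^2}+\ln q+\ln\frac{r+\frac{q^k+q^{-k}}{2}}{r+\frac{q^{k-1}+q^{-(k-1)}}{2}},
\]
and the dominant piece is $\ln q<0$, making the sequence strongly log-concave with ``curvature'' at least $\ln q^{-1}$ near its peak $k=N$; the contribution of the $q^k+q^{-k}$ terms and of $\ln|a_{k+1}a_{k-1}/a_k^2|$ must be shown to be negligible on the relevant range, and this is exactly where the log-order hypothesis $\sigma_{\log}<2$ and the relation between $\sigma_{\log}$ and the coefficients from Theorem~\ref{AWLP} enter, together with the choice $\gamma\in(1,\frac1{\sigma_{\log}-1})$. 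From this I obtain, for an exceptional set $E$ of radii, a bound of the shape $r_k\le \exp(-c\,b(N)(k-N)^2 + o(b(N)(k-N)^2))$ valid on $|k-N|$ up to some power of $N$; outside that range a cruder tail bound (from convergence of the series, or from a standard $\mu_q$ vs.\ $M(r;f)$ comparison à la Theorem~\ref{AWSeries}) kills the remaining terms.

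Finally I would sum the Gaussian bound over $|k-N|\ge\kappa$ with $\kappa=[\sqrt{(\beta/b(N))\ln(1/b(N))}]$: the first omitted term already has size $\approx\exp(-c\beta\ln(1/b(N)))=b(N)^{c\beta}$, and summing the tail of a Gaussian with spacing $1$ and width $\sim b(N)^{-1/2}$ multiplies this by $O(b(N)^{-1/2})$, giving $O(b(N)^{c\beta-1/2})$; choosing constants so that $c\beta$ beats $\frac{1+ (1-\omega)}{2}$... more precisely, since $\omega\in(0,\beta)$ is arbitrary below $\beta$, the bound $o(b(N)^{(\omega-1)/2})$ follows once the Gaussian exponent is calibrated, which is where the precise constant in the log-concavity curvature (and hence the precise form of $b(N)=N^{-1+\delta}$) is used. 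The weight $q^{-h(k-N)}$ only shifts the Gaussian's center by $O(b(N)^{-1}h)$, which is $o(\kappa)$, so it does not affect the tail estimate — this uniformity in $h\ge0$ is a minor but necessary check.

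The main obstacle I anticipate is the construction of the exceptional set $E$ of zero logarithmic density on which the ``error'' terms in the log-concavity estimate — principally the fluctuation of $\ln\mu_q(r;f)$ and the replacement of $\ln|a_{k+1}a_{k-1}/a_k^2|$ by its typical size — are genuinely negligible uniformly for $k$ in a window of width a power of $N$ around $N$. This is the step where the $q$-deformation makes things genuinely harder than in \cite{IY}: the factor $q^{k(k-1)/2}$ in $\phi_k$ means that the natural ``radius'' variable is logarithmic and the central index $N(r)$ grows only like $(\ln r)$ to a power, so the usual Borel–Nevanlinna / Hayman lemma that removes a set of finite logarithmic measure has to be applied to $\ln\mu_q$ as a function of $\ln r$ and the bookkeeping of which terms are $o(b(N)(k-N)^2)$ must be done with care. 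I would isolate this into a preparatory lemma (presumably proved in \S\ref{sec:Propmunu}) and cite it here.
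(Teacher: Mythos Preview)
Your plan has a genuine gap at the ``log-concavity'' step. The second difference you compute,
\[
\ln\frac{T_{k+1}(r)T_{k-1}(r)}{T_k(r)^2}
   = \ln\frac{|a_{k+1}a_{k-1}|}{|a_k|^2} + \ln q
     + \ln\frac{r+\tfrac{q^k+q^{-k}}{2}}{r+\tfrac{q^{k-1}+q^{1-k}}{2}},
\]
is not dominated by the $\ln q$ term: the coefficient piece $\ln|a_{k+1}a_{k-1}/a_k^2|$ is completely uncontrolled by the hypothesis $\sigma_{\log}<2$ or by Theorem~\ref{AWLP}, which only bounds $\liminf_n \frac{\ln\ln(1/|a_n|)}{\ln n}$ and says nothing about ratios of consecutive $|a_k|$ (indeed some $a_k$ may vanish). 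So you cannot extract a Gaussian bound directly from concavity of $k\mapsto\ln T_k(r)$. This is also why your text oscillates between a curvature $\ln q^{-1}$ and a curvature $b(N)=N^{-1+\delta}$: the constant $\ln q^{-1}$ is what the $\phi_k$ part alone would give, but it is not what survives, and the $b(N)$ in the statement is not produced by any intrinsic property of $T_k$ --- it is \emph{chosen}.

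The paper's route avoids this by the K\"ovari--Hayman comparison-sequence mechanism rather than a concavity argument. One fixes $\alpha(t)=-\frac{1}{\delta}t^\delta$, sets $\alpha_n=e^{\int_0^n\alpha}$, $\rho_n=e^{-\alpha(n)}$, and calls $r$ \emph{$q$-normal} if $|a_n\phi_n(-r;1)|\le(1+\varepsilon_{n,N})\mu_q(r;f)\,\frac{\alpha_n}{\alpha_N}\rho_N^{\,n-N}$ for all $n$ (Definition~\ref{qnormal}). The Gaussian bound $\frac{\alpha_{N\pm k}}{\alpha_N}\rho_N^{\pm k}\le e^{-\frac12 k^2 b(N\pm\ldots)}$ (Lemma~\ref{WVestimate}) then comes from $|\alpha'(t)|=t^{\delta-1}=b(t)$, i.e.\ from the \emph{comparison} sequence, not from $T_k$; and Theorem~\ref{qexceptional} shows the set of $q$-exceptional $r$ has zero logarithmic density. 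With that in hand the proof splits the sum into four ranges $k\le(1-\eta)N$, $(1-\eta)N<k\le N-\kappa$, $N+\kappa\le k<(1+\eta)N$, $k\ge(1+\eta)N$; on the two outer ranges the exponent $-\frac12 p^2 b(\cdot)$ beats the weight $(N\pm p)\ln q^{-h}+h\ln[N\pm p]_q$ and gives $O(e^{-\eta N})$, while on the inner ranges one first chooses $\eta$ small so that the weight is $(1+\varepsilon)q^{-hN}[N]_q^h$ and the Gaussian has curvature $b^*=\frac12(1-\varepsilon)b(N)$, and then sums $\sum_{p\ge\kappa}e^{-b^*p^2}$ with $\kappa=[\sqrt{(\beta/b(N))\ln(1/b(N))}]$ to get $o(b(N)^{(\omega-1)/2})$. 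Your last paragraph correctly anticipates that building $E$ is the crux, but the ``preparatory lemma'' you would cite is not a Borel--Nevanlinna growth lemma applied to $\ln\mu_q$; it is the comparison-sequence construction (Definition~\ref{testseq}, Theorem~\ref{qexceptional}, Lemma~\ref{WVestimate}), and that machinery is what actually supplies the exponent $b(N)$ appearing in the statement.
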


Applying Theorem~\ref{tail}, we obtain the following asymptotic behavior of successive Askey-Wilson differences of a transcendental entire function of log-order smaller than $2$.

\begin{theorem}
\label{WVmain}
Let $\displaystyle f(x)=\sum_{k=0}^\infty{a_k \phi_k(x;1)}$ be a transcendental entire function of log-order $\sigma_{\log}<2$, $\gamma\in(1,\frac{1}{\sigma_{\log}-1})$, and $\delta\in(0,1)$.\ \ Then there exists a set $E\subset[1,\infty)$ of zero logarithmic density such that for every $n\in\mathbb{N}$, we have
\[
	q^{nN-\frac{n(n+1)}{2}}\left(\frac{x}{[N]_q}\right)^n(\D_q^n f)(x) = f(x) + o(N^{-4(1-\delta)})M(r;f)
\]
as $r\to\infty$ and $r\in(0,\infty)\setminus E$, where $r=|x|$ and $N=\nu_q(r;f)$.
\end{theorem}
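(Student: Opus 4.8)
The plan is to feed the Askey--Wilson expansion $f=\sum_k a_k\phi_k(\,\cdot\,;1)$ into $\D_q^n f$, identify the central block $|k-N|<\kappa$ with $f$, and push everything else into the error term. First I would differentiate termwise, which is legitimate because by Corollary~\ref{AWExp} the value $(\D_q^n g)(x)$ is a \emph{fixed} finite linear combination of the values $g(\hat x^{(2j-n)})$, $0\le j\le n$, so the uniform convergence of the series on compacta (Theorem~\ref{AWSeries}) carries $\D_q^n$ inside the sum. Iterating Proposition~\ref{PhiRule} gives the closed form
\[
	(\D_q^n\phi_k)(x;1)=(-2)^n q^{n(n-1)/4}\,\frac{[k]_q^{!}}{[k-n]_q^{!}}\,\phi_{k-n}\bigl(x;\hat 1^{(n)}\bigr),
\]
so $(\D_q^n f)(x)=(-2)^n q^{n(n-1)/4}\sum_k a_k\,\frac{[k]_q^{!}}{[k-n]_q^{!}}\,\phi_{k-n}(x;\hat 1^{(n)})$.

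Next I would compare $\phi_{k-n}(x;\hat 1^{(n)})$ with $\phi_k(x;1)$ for $|x|=r$. Both polynomials have their zeros among the points $\hat 1^{(m)}=\tfrac12(q^{m/2}+q^{-m/2})$ with $m\lesssim 2k$, and the key point — here the hypothesis $\sigma_{\log}<2$ enters — is that the $\mu_q$--$\nu_q$ estimates of \S\ref{sec:Propmunu} (resting on Theorem~\ref{AWLP}) force $\ln r$ to grow faster than every power of $N=\nu_q(r;f)$, so that $q^{-k}/r\to0$ faster than every power of $N$, uniformly in $k$ over $|k-N|<\kappa$. For such $k$ this gives
\[
	\frac{\phi_{k-n}(x;\hat 1^{(n)})}{\phi_k(x;1)}=c_{k,n}\,x^{-n}\bigl(1+O(q^{-k}/r)\bigr),
\]
where $c_{k,n}$ is an explicit product of powers of $-2$ and of $q$. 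Combining this with the uniform estimate $\frac{[k]_q^{!}}{[k-n]_q^{!}}=[N]_q^{\,n}\bigl(1+O(q^{\,N-\kappa})\bigr)$ and with the geometric decay of $|a_k\phi_k(-r;1)|$ away from the maximal term (so that $\sum_{|k-N|<\kappa}|a_k\phi_k(x;1)|=O(\mu_q(r;f))$), one obtains after collecting the powers of $q$
\[
	q^{nN-n(n+1)/2}\Bigl(\frac{x}{[N]_q}\Bigr)^{n}\sum_{|k-N|<\kappa}a_k(\D_q^n\phi_k)(x;1)
	=\bigl(1+o(N^{-4(1-\delta)})\bigr)\sum_{|k-N|<\kappa}a_k\phi_k(x;1);
\]
and Theorem~\ref{tail} with $h=0$ replaces the right-hand side by $f(x)+o(N^{-4(1-\delta)})M(r;f)$ off a set of zero logarithmic density.

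For the tail $|k-N|\ge\kappa$ I would bound $|(\D_q^n\phi_k)(x;1)|$ by a constant multiple of $q^{-hk}[k]_q^{\,h}\,r^{-n}\,|\phi_k(-r;1)|$, with $h$ chosen so that $q^{-hk}[k]_q^{\,h}$ dominates the weight $\tfrac{[k]_q^{!}}{[k-n]_q^{!}}\,|c_{k,n}|$ carried by the $k$-th term (the factor $r^{-n}$ here cancels the $r^n$ produced by $(x/[N]_q)^n$); Theorem~\ref{tail} then applies with $\beta$ and $\omega$ taken so large that $b(N)^{(\omega-1)/2}=o(N^{-4(1-\delta)})$, and since $\mu_q(r;f)\le M(r;f)$ this makes $q^{nN-n(n+1)/2}(x/[N]_q)^n\sum_{|k-N|\ge\kappa}a_k(\D_q^n\phi_k)(x;1)=o(N^{-4(1-\delta)})M(r;f)$, again outside a zero-logarithmic-density set. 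Adding the two blocks, and taking $E$ to be the union of the finitely many exceptional sets, completes the proof.

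The main obstacle will be the bookkeeping: tracking all the powers of $q$ through the iterated Phi-rule and through $\phi_{k-n}(x;\hat 1^{(n)})/\phi_k(x;1)$, and then checking that \emph{every} error — the $O(q^{-k}/r)$ from the interpolation nodes, the $O(q^{N-\kappa})$ from the $q$-brackets, the variation of $c_{k,n}$ across the window $|k-N|<\kappa$, and Theorem~\ref{tail}'s tail estimate — is indeed $o(N^{-4(1-\delta)})$. The $q$-deformation is precisely what makes this harder than the Newton and Wilson cases of \cite{IY,Cheng}: the interpolation nodes $\hat 1^{(m)}$ spread out geometrically rather than arithmetically, and one must keep reinvoking the sharp growth relation between $\sigma_{\log}<2$, $\mu_q$ and $\nu_q$ in order to keep $r$ dominant over all the nodes in play.
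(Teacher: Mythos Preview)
Your outline handles the tail correctly --- bounding $|(\D_q^n\phi_k)(x;1)|$ by a constant times $q^{-hk}[k]_q^{\,h}\,r^{-n}\,|\phi_k(-r;1)|$ and invoking Theorem~\ref{tail} is exactly how the paper treats $\D_q^n\psi$. The gap is in the central block. Your displayed identity
\[
q^{nN-n(n+1)/2}\Bigl(\frac{x}{[N]_q}\Bigr)^{n}\sum_{|k-N|<\kappa}a_k(\D_q^n\phi_k)(x;1)
=\bigl(1+o(N^{-4(1-\delta)})\bigr)\sum_{|k-N|<\kappa}a_k\phi_k(x;1)
\]
requires the termwise ratio $q^{nN-n(n+1)/2}(x/[N]_q)^n(\D_q^n\phi_k)(x;1)\big/\phi_k(x;1)$ to be $1+o(N^{-4(1-\delta)})$ uniformly for $|k-N|<\kappa$. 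But a direct leading-coefficient computation (write $\phi_k(x;1)=\prod_{j=0}^{k-1}(1+q^{2j}-2q^jx)$ and $\phi_{k-n}(x;\hat 1^{(n)})=\prod_{j=0}^{k-n-1}(1+q^{n+2j}-2q^{n/2+j}x)$, and compare the coefficients of the top power of $x$) shows that your $c_{k,n}$ carries a factor of the form $q^{-cnk}$ with $c>0$ independent of $r$. Hence the ratio in question contains $q^{cn(N-k)}$, which ranges over $[q^{cn\kappa},\,q^{-cn\kappa}]$ across the window. Since $\kappa\sim\sqrt{N^{1-\delta}\ln N}\to\infty$ this is not $1+o(1)$, let alone $1+o(N^{-4(1-\delta)})$. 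Worse, the decay $|a_k\phi_k(-r;1)|\lesssim e^{-\frac12(k-N)^2 b(N)}\mu_q(r;f)$ from Lemma~\ref{WVestimate} is only polynomial in $N$ at the edge $|k-N|\approx\kappa$, so it cannot absorb the super-polynomial amplification $q^{-cn\kappa}$. The ``variation of $c_{k,n}$ across the window'' that you flag as bookkeeping is therefore not a nuisance but a genuine obstruction: the termwise comparison simply does not close.

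The paper avoids this precisely by \emph{not} comparing termwise. It factors the central block as $\phi_{N-\kappa}(x;1)\,p(x)$ with $p$ a polynomial of degree at most $2\kappa$, and applies the Askey--Wilson Leibniz rule (Theorem~\ref{AWLeibniz}) to the product. Now only one $\phi$-index, namely $N-\kappa$, ever meets $\D_q^n$, so a single $q$-power appears rather than a whole spectrum indexed by $k$; the remaining factor $p$ has small degree $2\kappa$, so Lemma~\ref{polyn} controls both the shifts $\eta_q^{\pm j}p$ and the differences $\D_q^{\,j}p$ and shows that all Leibniz terms with $j\ge1$ are negligible. This low-degree factorization is the missing idea in your plan; it is also exactly where the $q$-deformation forces a departure from a naive transcription of the Newton/Wilson arguments in \cite{IY,Cheng}.
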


Notice that the main difference between the estimates in the classical case and in our Theorem~\ref{tail} and ~\ref{WVmain} is that the corresponding factors $k^n$ and $N^n$ in Hayman \cite{Hayman2} and Ishizaki and Yanagihara \cite{IY} are replaced by our $(q^{-k}[k]_q)^n$ and $(q^{-N}[N]_q)^n$.

\section{Properties of the Askey-Wilson maximal term and central index}
\label{sec:Propmunu}

We start by stating the following lemmas which are about some useful properties of the functions $\mu_q(\cdot;f)$ and $\nu_q(\cdot;f)$.

\begin{lemma}
\label{munuprop}
Let $f$ be a non-constant entire function satisfying \eqref{A7}.\ \ Then
\begin{enumerate}[(i)]
	\item $\mu_q(\cdot;f)$ is continuous everywhere and strictly increasing on $[R,+\infty)$ for some $R>0$.\ \ Also $\displaystyle\lim_{r\to\infty}{\mu_q(r;f)}=+\infty$.
	\item $\nu_q(\cdot;f)$ is a right-continuous non-decreasing piecewise-constant function.\ \ If $f$ is transcendental, then $\displaystyle\lim_{r\to\infty}{\nu_q(r;f)}=+\infty$.
\end{enumerate}
\end{lemma}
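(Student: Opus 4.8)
The plan is to phrase everything in terms of the functions $T_n(r):=|a_n|\,2^n q^{n(n-1)/2}\prod_{k=0}^{n-1}\bigl(r+\tfrac{q^k+q^{-k}}{2}\bigr)$ for $n\in\mathbb{N}_0$, so that $\mu_q(r;f)=\max_{n}T_n(r)$ and $\nu_q(r;f)$ is the largest index attaining this maximum; note $T_n(r)=\max_{|x|=r}|a_n\phi_n(x;1)|$, since the zeros $\tfrac{q^k+q^{-k}}{2}$ of $\phi_n(\cdot;1)$ are real and (by AM--GM) at least $1$, so the farthest point on $\partial D(0;r)$ is $x=-r$. The decisive elementary observation is that each factor $r+\tfrac{q^k+q^{-k}}{2}$ is strictly positive and strictly increasing on $[0,\infty)$; hence for $n\ge1$ with $a_n\ne0$ the function $T_n$ is a polynomial of degree $n$ that is continuous, strictly increasing and tends to $+\infty$, while $T_0\equiv|a_0|$ and $T_n\equiv0$ whenever $a_n=0$. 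I would also record up front the finiteness fact used repeatedly: since $f$ satisfies \eqref{A7}, Theorems~\ref{convergence0} and~\ref{AWSeries} give uniform convergence of $\sum a_k\phi_k(\cdot;1)$ on $\overline{D(0;r)}$, so $T_n(r)\to0$ as $n\to\infty$ for each fixed $r$; consequently $\mu_q(r;f)$ is attained at finitely many indices, $\nu_q(r;f)$ is a well-defined natural number, and $\mu_q(r;f)>0$ because $f\not\equiv0$.

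For (i), continuity follows by a local reduction: on any compact $[a,b]\subset(0,\infty)$ one has $T_n(r)\le T_n(b)\to0$ while $\mu_q(r;f)\ge\mu_q(a;f)=:c>0$ (monotonicity of the $T_n$ makes $\mu_q$ non-decreasing), so all but finitely many $n$ have $T_n<c\le\mu_q$ throughout $[a,b]$ and $\mu_q$ there agrees with a maximum of finitely many continuous functions. For strict monotonicity, let $n_1\ge1$ be the least index with $a_{n_1}\ne0$ (it exists since $f$ is non-constant); as $T_{n_1}(r)\to\infty$ there is $R$ with $T_{n_1}(r)>T_0(r)$ for $r\ge R$, whence $\nu_q(r;f)\ge1$ there, and then for $R\le r<r'$ we get $\mu_q(r';f)\ge T_{\nu_q(r;f)}(r')>T_{\nu_q(r;f)}(r)=\mu_q(r;f)$ since $T_{\nu_q(r;f)}$ is strictly increasing. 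Finally $\mu_q(r;f)\ge T_{n_1}(r)\to\infty$.

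For (ii), the non-decrease of $\nu_q$ is the classical maximal-term argument: for $m>m'$ with $a_m,a_{m'}\ne0$ the ratio $T_m(r)/T_{m'}(r)=C\prod_{k=m'}^{m-1}\bigl(r+\tfrac{q^k+q^{-k}}{2}\bigr)$ (with $C>0$) is strictly increasing in $r$, so if $\nu_q(r;f)=m$ and $r'>r$ then $T_m(r')/T_{m'}(r')>T_m(r)/T_{m'}(r)\ge1$ for every $m'<m$ (and trivially $T_{m'}(r')<T_m(r')$ when $a_{m'}=0$), forcing $\nu_q(r';f)\ge m$. Right-continuity at $r_0$: writing $m=\nu_q(r_0;f)$, we already have $\nu_q\ge m$ just to the right; on $[r_0,r_0+1]$ only finitely many indices $n>m$ can attain the maximum (because $T_n(r_0+1)\to0$ while $\mu_q(r;f)\ge T_m(r_0)>0$ for $r\ge r_0$), each of these finitely many $n$ has $T_n(r_0)<T_m(r_0)$, and continuity makes $T_n<T_m$ on a right-neighbourhood of $r_0$; intersecting them yields $[r_0,r_0+\varepsilon)$ on which $\nu_q\equiv m$. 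Piecewise-constancy is then automatic: on a compact interval $\nu_q$ is non-decreasing, integer-valued and bounded by its value at the right endpoint, hence has finitely many jumps. Lastly, when $f$ is transcendental there are infinitely many indices $n_1<n_2<\cdots$ with $a_{n_j}\ne0$, and $T_{n_j}(r)/T_{n_i}(r)\to\infty$ as $r\to\infty$ for $i<j$; fixing $J$, for all large $r$ the maximum satisfies $\mu_q(r;f)\ge T_{n_J}(r)>T_{n_i}(r)$ for every $i<J$, so it is attained at some index $\ge n_J$, i.e.\ $\nu_q(r;f)\ge n_J\ge J$, giving $\nu_q(r;f)\to\infty$.

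All of this is soft; the only things that need care are (a) systematically invoking \eqref{A7} and Theorem~\ref{AWSeries} to reduce each ``$\max$ over $n$'' to a finite maximum on compact $r$-intervals, and (b) the non-decrease and right-continuity of $\nu_q$. Both rest entirely on the single fact that $r+\tfrac{q^k+q^{-k}}{2}$ is positive and strictly increasing in $r$ — the one place where the $q$-deformation might a priori have caused trouble but does not, since $\tfrac{q^k+q^{-k}}{2}\ge1$ for every $k\ge0$. I expect step (b), the monotonicity and right-continuity of $\nu_q$, to be the most delicate part to write cleanly, though it remains routine.
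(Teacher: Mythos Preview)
Your proof is correct and is precisely the classical argument the paper alludes to: the paper does not actually write out a proof but simply says ``The proof is the same as the one in the classical case'' and refers to \cite[Lemma~4.1]{Cheng} for the Wilson-series analogue. Your careful reduction to the monotone polynomials $T_n(r)$, the finite-maximum argument on compact intervals via $T_n(r)\to0$, and the ratio argument for the monotonicity of $\nu_q$ constitute exactly that standard proof, so nothing further is needed.
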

The proof is the same as the one in the classical case.\ \ One can see \cite[Lemma 4.1]{Cheng} for a proof of analogous statements for the Wilson series.

\pagebreak
\begin{lemma}
\label{gg}
Let $\displaystyle f(x)=\sum_{k=0}^\infty{a_k\phi_k(x;1)}$ be an entire function of log-order smaller than $2$ and let $\gamma>1$.\ \ Then for each $n\in\mathbb{N}_0$, there exists $K_n>1$ such that
	\[
		|a_n\phi_n(-r;1)| \le K_n M(r;f)
	\]
for every $r\ge e^{n^\gamma}$, and the sequence $\{K_n\}_{n\in\mathbb{N}_0}$ decreases to $1$.
\end{lemma}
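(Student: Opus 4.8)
The plan is to extract $a_n\phi_n(\cdot;1)$ from the Askey-Wilson expansion by a Cauchy-type contour integral and then to exploit that the threshold $e^{n^\gamma}$, with $\gamma>1$, outgrows the largest interpolation node involved. Throughout I would write $\xi_j:=\hat 1^{(2j)}=\tfrac{q^j+q^{-j}}{2}$, so that Definition~\ref{AWbasis} gives $\phi_k(x;1)=(-2)^k q^{k(k-1)/2}\prod_{j=0}^{k-1}(x-\xi_j)$ with $1=\xi_0\le\xi_j<q^{-j}$; one may assume $f\not\equiv0$, and since $\sigma_{\log}(f)<2$ forces \eqref{A7}, Theorem~\ref{AWSeries} supplies the expansion $f=\sum_k a_k\phi_k(\cdot;1)$ converging uniformly on compacta. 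The first step would be to establish the identity $a_n\phi_n(x;1)=\bigl(\prod_{j=0}^{n-1}(x-\xi_j)\bigr)f[\xi_0,\dots,\xi_n]$, where $f[\xi_0,\dots,\xi_n]$ denotes the divided difference: since $\phi_k(\cdot;1)$ vanishes at $\xi_0,\dots,\xi_{k-1}$, only finitely many terms of the series survive at each $\xi_j$, so for $m\in\{n-1,n\}$ the partial sum $\sum_{k=0}^{m}a_k\phi_k(\cdot;1)$ is precisely the polynomial of degree $\le m$ interpolating $f$ at $\xi_0,\dots,\xi_m$, and the difference of these two interpolants is the Newton correction term on the right. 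For $r>\xi_n$ the residue theorem (applicable since $f$ is entire) then rewrites the divided difference as $\frac{1}{2\pi i}\oint_{|t|=r}\frac{f(t)\,dt}{\prod_{j=0}^{n}(t-\xi_j)}$.

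Next I would estimate this integral on $|t|=r$: with $|(-r)-\xi_j|=r+\xi_j$ and $\min_{|t|=r}|t-\xi_j|=r-\xi_j$, one obtains, for $r>\xi_n$,
\[
|a_n\phi_n(-r;1)|\ \le\ G_n(r)\,M(r;f),\qquad G_n(r):=\frac{r}{r-\xi_n}\prod_{j=0}^{n-1}\frac{r+\xi_j}{r-\xi_j}.
\]
Each factor of $G_n$ exceeds $1$ and is strictly decreasing on $(\xi_n,\infty)$, so on $[e^{n^\gamma},\infty)$ one may take $K_n:=G_n(e^{n^\gamma})$ — once $e^{n^\gamma}>\xi_n$, which, since $\xi_n<q^{-n}$ and $\gamma>1$, holds for all $n\ge n_0=n_0(q,\gamma)$. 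For such $n$, using $\xi_j/e^{n^\gamma}\le\xi_n/e^{n^\gamma}\le\tfrac12$ (after possibly enlarging $n_0$) together with $\sum_{j\le n}\xi_j\le\frac{q^{-n}}{1-q}$, one gets $0<\ln K_n\le\sum_{j=0}^{n}\bigl(\ln(1+\tfrac{\xi_j}{e^{n^\gamma}})-\ln(1-\tfrac{\xi_j}{e^{n^\gamma}})\bigr)\le\frac{3}{1-q}\,e^{\,n\ln(1/q)-n^\gamma}\to0$, so $K_n\to1$. I expect this estimate to be the main obstacle: the crude bound $G_n(r)$ has infimum $1$ over $r$ but is large when $r$ is comparable to $\xi_n\asymp q^{-n}$, and it is precisely the hypothesis $\gamma>1$ that keeps the admissible radii $r\ge e^{n^\gamma}$ clear of that dangerous range — the analogous estimates in \cite{IY,Cheng} do not have to contend with the $q$-factor $q^{-n}$ in the node $\xi_n$.

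Finally, for the finitely many indices $n<n_0$ I would argue more crudely: $M(\cdot;f)$ is positive and continuous on $(0,\infty)$ while $|a_n\phi_n(-r;1)|=O(r^n)$ as $r\to\infty$, and $M(r;f)$ dominates every power of $r$ when $f$ is transcendental (if instead $f$ is a polynomial, then $a_n=0$ for $n>\deg f$ so the claim is trivial for those $n$, while for $n\le\deg f$ the ratio is again bounded); hence $\hat K_n:=\sup_{r\ge e^{n^\gamma}}|a_n\phi_n(-r;1)|/M(r;f)$ is finite, and I set $K_n:=1+\hat K_n>1$. The sequence $(K_n)_{n\ge0}$ then satisfies $K_n>1$ and $K_n\to1$, and I would conclude by replacing it with $\widetilde K_n:=2^{-n}+\sup_{m\ge n}K_m$, which is finite (as $K_m\to1$), still exceeds $1$, still majorizes $|a_n\phi_n(-r;1)|/M(r;f)$ on $[e^{n^\gamma},\infty)$, is strictly decreasing, and tends to $1$, giving the desired sequence $\{K_n\}$. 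This last paragraph — the small-index cases and the monotonization — is routine bookkeeping.
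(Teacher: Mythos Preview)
Your proof is correct and follows essentially the same route as the paper: both extract $a_n$ via the Cauchy--residue integral $\frac{1}{2\pi i}\oint_{|t|=r}\frac{f(t)}{\prod_{j=0}^{n}(t-\xi_j)}\,dt$ (you phrase it through the Hermite divided-difference formula, the paper writes it directly as $a_n=-\frac{q^n}{\pi i}\oint\frac{f}{\phi_{n+1}}$) and arrive at the identical bound $G_n(r)=\frac{r}{r-\xi_n}\prod_{j=0}^{n-1}\frac{r+\xi_j}{r-\xi_j}$. The only cosmetic difference is that the paper records the explicit closed form $K_n=\bigl(\frac{1+\xi_n e^{-n^\gamma}}{1-\xi_n e^{-n^\gamma}}\bigr)^{n+1}$ and asserts its monotonicity directly, whereas you build the monotone sequence a posteriori via $\widetilde K_n=2^{-n}+\sup_{m\ge n}K_m$.
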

\begin{proof}
We let $K_0$ be a large positive real number and let
\[
	K_n:=\left(1+\frac{q^n+q^{-n}}{2e^{n^\gamma}}\right)^{n+1}\left(1-\frac{q^n+q^{-n}}{2e^{n^\gamma}}\right)^{-n-1}
\]
for $n\ge 1$.\ \ Then the sequence $\{K_n\}_{n\in\mathbb{N}_0}$ decreases to $1$.\ \ Now for each $n\in\mathbb{N}_0$ and each $r\ge e^{n^\gamma}$, we have
\[
	a_n = -\frac{q^n}{\pi i}\int_{\partial D(0;r)}{\frac{f(y)}{\phi_{n+1}(y;1)}\,dy}
\]
by Cauchy's Residue Theorem, so
\begin{align*}
	|a_n \phi_n(-r;1)| &\le \frac{q^n}{\pi}\frac{2\pi r M(r;f)}{2^{n+1}q^{\frac{n(n+1)}{2}}(r-1)(r-\frac{q+q^{-1}}{2})\cdots(r-\frac{q^n+q^{-n}}{2})} \\
	&\ \ \ \,\cdot 2^n q^{\frac{n(n-1)}{2}}(r+1)\left(r+\frac{q+q^{-1}}{2}\right)\cdots\left(r+\frac{q^{n-1}+q^{1-n}}{2}\right) \\
	&= \frac{(1)(1+\frac{1}{r})(1+\frac{q+q^{-1}}{2r})\cdots(1+\frac{q^{n-1}+q^{1-n}}{2r})}{(1-\frac{1}{r})(1-\frac{q+q^{-1}}{2r})(1-\frac{q^2+q^{-2}}{2r})\cdots(1-\frac{q^n+q^{-n}}{2r})}M(r;f) \\
	&\le \left(1+\frac{q^n+q^{-n}}{2r}\right)^{n+1}\left(1-\frac{q^n+q^{-n}}{2r}\right)^{-n-1} M(r;f) \\
	&\le K_n M(r;f).
\end{align*}
\end{proof}

\begin{lemma}
\label{muorder}
Let $f$ be a non-constant entire function of log-order $\sigma_{\log}<2$.\ \ Then
\begin{enumerate}[(i)]
	\item $\displaystyle\sigma_{\log}(\mu_q(\cdot;f))=1+\limsup_{r\to\infty}\frac{\ln\nu_q(r;f)}{\ln\ln r}\le\sigma_{\log}$.
	\item In particular, for every $\gamma<\frac{1}{\sigma_{\log}(\mu_q(\cdot;f))-1}$, we have
	\[
		\nu_q(r;f)^\gamma\le \ln r
	\]
	for every sufficiently large $r\in(0,+\infty)$.
\end{enumerate}
\end{lemma}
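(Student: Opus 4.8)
Throughout write $s=\ln r$ and $\psi(s):=\ln\mu_q(e^s;f)$, and set $c_k:=\tfrac{q^k+q^{-k}}{2}$ and $c_0:=1/\ln q^{-1}$. By Definition~\ref{AWmunu}, $\psi(s)=\max_{n\in\mathbb N_0}g_n(s)$ with $g_n(s)=\ln|a_n|+n\ln 2+\tfrac{n(n-1)}{2}\ln q+\sum_{k=0}^{n-1}\ln(e^s+c_k)$. Since $\tfrac{d}{ds}\ln(e^s+c_k)=\tfrac1{1+c_ke^{-s}}$ is non-decreasing, each $g_n$ is convex and non-decreasing, so $\psi$ is convex, and by Lemma~\ref{munuprop} it is eventually strictly increasing with $\psi(s)\to\infty$. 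Because $g_n'$ is strictly increasing in $n$ and $\nu_q(\cdot;f)$ is right-continuous (Lemma~\ref{munuprop}), the right derivative of $\psi$ is $\psi_+'(s)=\sum_{k=0}^{\nu_q(e^s;f)-1}\tfrac1{1+c_ke^{-s}}$, which is non-decreasing. The first step I would carry out is to read off from this the two-sided bound $\tfrac12\min\{\nu_q(e^s;f),J(s)\}\le\psi_+'(s)\le\nu_q(e^s;f)$, where $J(s):=\#\{k\ge 0:c_k\le e^s\}$ and, since $c_k$ increases with $c_k\sim\tfrac12 q^{-k}$, one has $J(s)=c_0 s+O(1)$ — the lower bound because each of the $\min\{\nu_q(e^s;f),J(s)\}$ summands with $c_k\le e^s$ is $\ge\tfrac12$, the upper because every summand is $<1$.

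The decisive, and I expect hardest, step is to prove $\nu_q(r;f)=o(\ln r)$. First I would turn Lemma~\ref{gg} into a decay bound for the coefficients: evaluating it at the radius $e^{n^{\gamma_0}}$ for a fixed $\gamma_0>1$ and using the elementary estimate $\phi_n(-e^{n^{\gamma_0}};1)\ge\prod_{k=0}^{n-1}2e^{n^{\gamma_0}}q^k=2^n e^{n^{1+\gamma_0}}q^{n(n-1)/2}\ge e^{\frac12 n^{1+\gamma_0}}$ (valid for large $n$ since $1+\gamma_0>2$), which gives $|a_n|\le K_0 M(e^{n^{\gamma_0}};f)e^{-\frac12 n^{1+\gamma_0}}$. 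Since $\sigma:=\sigma_{\log}(f)<2$, I can choose $\gamma_0\in(1,\tfrac1{\sigma-1})$ (any $\gamma_0>1$ if $\sigma\le 1$) and $\varepsilon>0$ so small that $\gamma_0(\sigma+\varepsilon)<1+\gamma_0$; then $\ln M(e^{n^{\gamma_0}};f)\le n^{\gamma_0(\sigma+\varepsilon)}=o(n^{1+\gamma_0})$, forcing $|a_n|\le e^{-\frac14 n^{1+\gamma_0}}$ for all large $n$. Next, using $\phi_n(-r;1)\le L(r):=\prod_{k=0}^\infty(1+2rq^k+q^{2k})$ together with the elementary bound $\ln L(r)\le c_0(\ln r)^2+O(\ln r)$ (split the sum $\sum_k\ln(1+2rq^k+q^{2k})$ at $k\approx c_0\ln r$), I get, for $m=\lceil(\ln r)^\theta\rceil$ with $\theta\in(\tfrac2{1+\gamma_0},1)$, that $\max_{n\ge m}|a_n\phi_n(-r;1)|\le L(r)e^{-\frac14 m^{1+\gamma_0}}\to 0$ because $\theta(1+\gamma_0)>2$. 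As $\mu_q(r;f)\to\infty$ (Lemma~\ref{munuprop}), the maximal term of the series must, for all large $r$, occur at an index $<m$; hence $\nu_q(r;f)=O((\ln r)^\theta)=o(\ln r)$.

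Granting this, part (i) follows. For the inequality $\sigma_{\log}(\mu_q(\cdot;f))\le\sigma_{\log}(f)$: since $\nu_q(r;f)<m$ for large $r$, Lemma~\ref{gg} (with $\phi_n(-r;1)\le\phi_n(-e^{n^{\gamma_0}};1)$ whenever $e^{n^{\gamma_0}}>r$) gives $\mu_q(r;f)\le K_0 M(\max\{r,e^{m^{\gamma_0}}\};f)=K_0 M(e^{m^{\gamma_0}};f)$ for large $r$ (note $\gamma_0\theta>\tfrac{2\gamma_0}{1+\gamma_0}>1$); applying $\ln^+\ln^+$, using $m^{\gamma_0}=O((\ln r)^{\gamma_0\theta})$ and the definition of $\sigma_{\log}(f)$, and then letting $\gamma_0\downarrow 1$, $\theta\uparrow 1$, $\varepsilon\downarrow 0$ yields $\sigma_{\log}(\mu_q(\cdot;f))\le\sigma_{\log}(f)<2$. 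For the equality: since $J(s)=c_0 s+O(1)$ while $\nu_q(e^s;f)=o(s)$, we have $\nu_q(e^s;f)<J(s)$ for large $s$, so the first-paragraph bound sharpens to $\tfrac12\nu_q(e^s;f)\le\psi_+'(s)\le\nu_q(e^s;f)$. Writing $\alpha:=\limsup_{r\to\infty}\tfrac{\ln\nu_q(r;f)}{\ln\ln r}=\limsup_{s\to\infty}\tfrac{\ln\nu_q(e^s;f)}{\ln s}$ and integrating, the upper estimate $\psi_+'(t)\le\nu_q(e^t;f)\le t^{\alpha+\varepsilon}$ (eventually) gives $\psi(s)\le O(1)+\tfrac{s^{\alpha+1+\varepsilon}}{\alpha+1+\varepsilon}$, hence $\sigma_{\log}(\mu_q(\cdot;f))\le\alpha+1$; and along a sequence $s_j\to\infty$ with $\nu_q(e^{s_j};f)\ge s_j^{\alpha-\varepsilon}$, convexity gives $\psi(2s_j)-\psi(s_j)\ge s_j\psi_+'(s_j)\ge\tfrac12 s_j^{\alpha+1-\varepsilon}$, hence $\sigma_{\log}(\mu_q(\cdot;f))\ge\alpha+1$ (trivially $\ge 1=\alpha+1$ when $\alpha=0$). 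Finally, part (ii) is immediate: if $\gamma<\tfrac1{\sigma_{\log}(\mu_q(\cdot;f))-1}$, then by (i) $\limsup_{r\to\infty}\tfrac{\gamma\ln\nu_q(r;f)}{\ln\ln r}=\gamma(\sigma_{\log}(\mu_q(\cdot;f))-1)<1$, so $\nu_q(r;f)^\gamma<\ln r$ for all sufficiently large $r$.

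The main obstacle is the estimate $\nu_q(r;f)=o(\ln r)$ of the second paragraph, which is where the $q$-deformation genuinely bites: the geometric spacing $c_k\approx\tfrac12 q^{-k}$ of the interpolation nodes makes $\phi_n(-e^{n^{\gamma_0}};1)$ as large as $e^{\frac12 n^{1+\gamma_0}}$ while simultaneously inflating $L(r)$ to size $\exp\!\big(\Theta((\ln r)^2)\big)$, and matching these two exponents is precisely what consumes the hypothesis $\sigma_{\log}(f)<2$. Everything after that is a $q$-analogue of the standard Kövari–Hayman bookkeeping, with the classical index $k$ replaced by quantities governed by $c_k$.
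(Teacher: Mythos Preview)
Your argument is correct, and it takes a genuinely different route from the paper's proof. The paper proves the inequality $\sigma_{\log}(\mu_q)\le\sigma_{\log}$ by passing through an auxiliary entire function $f^*(x)=\sum_k(-1)^k|a_k|\phi_k(x;1)$: it relates the Askey--Wilson coefficients $a_n$ to the Maclaurin coefficients $b_k$ via the combinatorial numbers $T(k,n)$ of Proposition~\ref{C}, invokes the Juneja--Kapoor--Bajpai formula to bound $|a_n|$, and then bounds the Maclaurin coefficients $b_n^*$ of $f^*$ in turn. The two directions of the equality with $1+\limsup\frac{\ln\nu_q}{\ln\ln r}$ are then handled separately, the harder one (step~(3)) by a somewhat intricate substitution $R=(r+\tfrac12(q^{N-1}+q^{1-N}))^{1+\ln\ln r}-\tfrac12(q^{N-1}+q^{1-N})$ together with an ad hoc argument that $q^{-N}\le r$ eventually. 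By contrast, you bypass the auxiliary function and the $T(k,n)$ machinery entirely: you read the coefficient decay $|a_n|\le e^{-\frac14 n^{1+\gamma_0}}$ directly out of Lemma~\ref{gg}, and then exploit the convexity of $s\mapsto\ln\mu_q(e^s;f)$ to identify its right derivative with $g_N'(s)=\sum_{k<N}(1+c_ke^{-s})^{-1}$, which immediately sandwiches $\psi_+'$ between $\tfrac12\nu_q$ and $\nu_q$ once $\nu_q=o(\ln r)$ is known. Integrating that sandwich gives both halves of the equality in one stroke. Your approach is more elementary and arguably more transparent; the paper's approach has the side benefit of setting up the Lindel\"of--Pringsheim analogue (Theorem~\ref{AWLP}) in parallel, since the same $|a_n|$ estimates feed into both.

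One purely notational slip: you define $c_k:=\tfrac{q^k+q^{-k}}{2}$ and then also set $c_0:=1/\ln q^{-1}$, overloading the symbol $c_0$ (the first definition gives $c_0=1$). It is clear from context which is meant at each occurrence, but you should rename the constant $1/\ln q^{-1}$.
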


We note here that the inequality $\sigma_{\log}(\mu_q(\cdot;f))\le\sigma_{\log}$ in Lemma~\ref{muorder} (i) can in fact be improved to an equality as stated in the following theorem, but we delay the proof of this result to \S\ref{sec:Proofs} and only prove Lemma~\ref{muorder} in the meantime.

\begin{theorem}
\label{muorderequal}
Let $f$ be a non-constant entire function of log-order $\sigma_{\log}<2$.\ \ Then
\[
	\sigma_{\log}=\sigma_{\log}(\mu_q(\cdot;f))=1+\limsup_{r\to\infty}\frac{\ln\nu_q(r;f)}{\ln\ln r}.
\]
\end{theorem}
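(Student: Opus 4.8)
The plan is to establish the reverse inequality $\sigma_{\log}(\mu_q(\cdot;f)) \ge \sigma_{\log}$, since Lemma~\ref{muorder}(i) already gives $\sigma_{\log}(\mu_q(\cdot;f)) \le \sigma_{\log}$ and the identity $\sigma_{\log}(\mu_q(\cdot;f)) = 1 + \limsup_{r\to\infty}\frac{\ln\nu_q(r;f)}{\ln\ln r}$. The natural route is to bound $M(r;f)$ from above in terms of the maximal term $\mu_q(r;f)$, because then $\ln^+\ln^+ M(r;f)$ will be controlled by $\ln^+\ln^+\mu_q(r;f)$ up to lower-order corrections, forcing the log-orders to agree. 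Concretely, from the Askey-Wilson series $f(x) = \sum_{k=0}^\infty a_k\phi_k(x;1)$ one has, on $\partial D(0;r)$, the crude estimate $M(r;f) \le \sum_{k=0}^\infty |a_k|2^k q^{k(k-1)/2}\prod_{j=0}^{k-1}\bigl(r+\frac{q^j+q^{-j}}{2}\bigr) \le \sum_{k=0}^\infty \mu_q(r;f)\,\rho_k(r)$, where $\rho_k(r)$ compares the $k$-th term to the maximal one; so the task reduces to showing the tail $\sum_k \rho_k(r)$ is subpolynomial in $\mu_q(r;f)$.

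The key steps, in order, would be: \textbf{(1)} Use the definition of $\mu_q$ and the convexity (in $\ln r$) of the terms to show that for $k$ far from $N=\nu_q(r;f)$ the ratio of the $k$-th term to $\mu_q(r;f)$ decays geometrically; this is where Theorem~\ref{tail} with $h=0$ can be invoked directly, giving $\sum_{k:|k-N|\ge\kappa} |a_k\phi_k(-r;1)| = o(\mu_q(r;f)\,b(N)^{(\omega-1)/2})$ outside a set $E$ of zero logarithmic density, while the near terms $|k-N|<\kappa$ contribute at most $(2\kappa+1)\mu_q(r;f)$. \textbf{(2)} Combine these to get $M(r;f) \le C\,\kappa\,\mu_q(r;f)$ for $r\notin E$, where $\kappa = O(\sqrt{N^{1-\delta}\ln N}) = N^{o(1)}$; hence $\ln M(r;f) \le \ln\mu_q(r;f) + O(\ln N)$ on $r\notin E$. \textbf{(3)} Since by Lemma~\ref{muorder}(ii) we have $N = \nu_q(r;f) \le (\ln r)^{1/\gamma}$ for large $r$ with $\gamma>1$, the term $\ln N = O(\ln\ln r)$ is negligible against $\ln\ln M$, so $\limsup \frac{\ln^+\ln^+ M(r;f)}{\ln\ln r}$, taken over $r\notin E$, is $\le \sigma_{\log}(\mu_q(\cdot;f))$. \textbf{(4)} Finally, discard the exceptional set: because $E$ has zero logarithmic density and $M(r;f)$ is increasing, standard density arguments (replacing $r$ by a nearby $r'\notin E$ with $\ln r' \sim \ln r$) show that removing $E$ does not change the $\limsup$ defining the log-order, so $\sigma_{\log}(f) \le \sigma_{\log}(\mu_q(\cdot;f))$.

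I expect step \textbf{(4)}, handling the exceptional set, to be the main obstacle — or rather the most delicate point requiring care. One must argue that for $r$ in a suitable unbounded sequence realizing the $\limsup$ for $M$, there are nearby $r'\notin E$; the zero-logarithmic-density hypothesis guarantees that intervals $[r, r^{1+\epsilon}]$ are not entirely contained in $E$, and monotonicity of $M$ lets one transfer the lower bound at $r$ to an upper-bound comparison at $r'$. The estimate $M(r;f) \le C\kappa\,\mu_q(r;f)$ itself (step 2) is essentially bookkeeping given Theorem~\ref{tail}, and the reduction of $\ln\kappa$ and $\ln N$ to $o(\ln\ln M)$ (step 3) is immediate from Lemma~\ref{muorder}. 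A secondary technical point is ensuring $\mu_q(r;f) \to \infty$ fast enough that $\ln\ln\mu_q(r;f)$ is comparable to $\ln\ln M(r;f)$; this follows from Lemma~\ref{munuprop}(i) together with the inequality $M(r;f)\ge |a_N\phi_N(-r;1)|$ which already gives $M(r;f)\ge$ (a constant times) $\mu_q(r;f)$ for infinitely many $r$, closing the loop from the other direction if needed.
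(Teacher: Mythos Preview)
Your overall strategy coincides with the paper's: bound $M(r;f)$ above by $\mu_q(r;f)$ times a factor negligible on the log-order scale, for $r$ outside a thin exceptional set, and conclude that the two log-orders agree. The paper records exactly this as the double inequality \eqref{Mbound}.

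The gap is a circular dependency. You invoke Theorem~\ref{tail}, and in step~(4) you rely on the exceptional set $E$ having zero logarithmic density. But Theorem~\ref{tail} is built on Theorem~\ref{qexceptional}, whose proof uses Theorem~\ref{AWLP}, and the last line of the proof of Theorem~\ref{AWLP} appeals to Theorem~\ref{muorderequal} itself. The paper flags this explicitly in the sentence immediately preceding its proof of Theorem~\ref{muorderequal}: ``the following proof of Theorem~\ref{muorderequal} depends only on Lemma~\ref{WVestimate} but not Theorem~\ref{qexceptional}, while in the proof of Theorem~\ref{qexceptional} we have already made use of Theorem~\ref{AWLP} which follows from Theorem~\ref{muorderequal}.'' So as written your argument is circular.

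The repair is not a change of strategy but of level: replace the appeal to Theorem~\ref{tail} by the raw term-by-term bounds of Lemma~\ref{WVestimate}, whose proof uses only Definition~\ref{qnormal} and the comparison sequences (not the log-density of $E$), and sum them directly following Hayman's classical argument (the paper cites \cite[Theorem~6]{Hayman2}). This is exactly how the paper obtains the right-hand inequality in \eqref{Mbound}; your steps (2)--(4) are then essentially the remaining bookkeeping.
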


\bigskip
\noindent \textit{Proof of Lemma~\ref{muorder}.}\ \ Let the Maclaurin series expansion of $f$ be $\displaystyle f(x)=\sum_{k=0}^\infty{b_k x^k}$.\ \ Since $\sigma_{\log}<2$, Theorem~\ref{AWSeries} implies that there exists a sequence $\{a_n\}_{n\in\mathbb{N}_0}$ of complex numbers such that
\[
	f(x)=\sum_{k=0}^\infty{a_k \phi_k(x;1)},
\]
and it follows that
\begin{align}
\label{W11}
\begin{aligned}
	a_n&=\frac{1}{(-2)^n[n]_q^{!}}q^{-\frac{n(n-1)}{4}}\sum_{k=n}^\infty{b_k\left.\D_q^n x^k\right|_{x=\hat{1}^{(n)}}}=\sum_{k=n}^\infty{b_k T(k,n)}
\end{aligned}
\end{align}
where the notation $T(k,n)$ is as in Proposition~\ref{C}.\ \ Next let $f^*$ be the function defined by the Askey-Wilson series
\[
	f^*(x):=\sum_{k=0}^\infty{(-1)^k|a_k|\phi_k(x;1)}
\]
and let $\{b_n^*\}_{n\in\mathbb{N}_0}$ be the sequence of real numbers such that $\displaystyle f^*(x)=\sum_{k=0}^\infty{b_k^* x^k}$.\ \ Note that for each pair of positive integers $k\ge n$ we have
\begin{align}
\label{W0}
\begin{aligned}
	&\ \ \ \,\frac{(-1)^n}{n!}\left.\frac{d^n}{dx^n}(-1)^k\phi_k(x;1)\right|_{x=0} \\
	&= 2^k q^{\frac{k(k-1)}{2}}(-1)^n\cdot\left(\mbox{coefficient of $x^{k-n}$ in $\displaystyle\prod_{j=0}^{k-1}{\left(x-\frac{q^j+q^{-j}}{2}\right)}$}\right) \\
	&\le 2^k q^{\frac{k(k-1)}{2}}\left(\frac{q^{k-n}+q^{n-k}}{2}\right)\cdots\left(\frac{q^{k-1}+q^{1-k}}{2}\right)\frac{k!}{n!(k-n)!} \\
	&\le 2^k q^{\frac{(k-n)(k-n-1)}{2}}\frac{k!}{n!(k-n)!}.
\end{aligned}
\end{align}

Now it suffices to prove (i), and we divide the proof into the following steps.\ \ The proof of (ii) is essentially step (3).
\begin{enumerate}
\item We first show that $f^*$ is entire and $\sigma_{\log}(f^*)\le\sigma_{\log}$.\ \ Let $\gamma\in(1,\frac{1}{\sigma_{\log}-1})$ be arbitrary.\ \ Analogous to the Lindel\"{o}f-Pringsheim theorem, we have the following formula obtained by Juneja, Kapoor and Bajpai \cite{JKB}
\[
	\frac{1}{\sigma_{\log}-1}=\liminf_{k\to\infty}{\frac{\ln\ln\frac{1}{|b_k|}}{\ln k}}-1,
\]
so $|b_k|<e^{-k^{1+\gamma}}$ for every sufficiently large $k$.\ \ Applying this together with Proposition~\ref{C} (ii) to \eqref{W11}, we see that there exist positive constants $K_1$ and $K_2$ such that for every sufficiently large $n\in\mathbb{N}$,
\begin{align*}
	|a_n| &\le \sum_{k=n}^\infty{|b_k T(k,n)|} \le K_1 q^{\frac{n(n+1)}{2}}\sum_{k=n}^\infty{\frac{1}{q^{nk}e^{k^{1+\gamma}}}} \\
	&\le \frac{K_1 q^{\frac{n(n+1)}{2}}}{q^{n^2}e^{n^{1+\gamma}}}\sum_{k=0}^\infty{\frac{1}{q^{nk}e^{(n+k)^{1+\gamma}-n^{1+\gamma}}}} \\
	&\le \frac{K_1}{q^{\frac{n(n-1)}{2}}e^{n^{1+\gamma}}}\sum_{k=0}^\infty{\frac{1}{q^{nk}e^{(1+\gamma)n^\gamma k}}} \le \frac{K_2}{q^{\frac{n(n-1)}{2}} e^{n^{1+\gamma}}}.
\end{align*}
Applying \eqref{W0} and Stirling's approximation, we see that there exists a positive constant $K_3$ such that for every sufficiently large $n\in\mathbb{N}$,
\begin{align*}
	|b_n^*| &\le \frac{1}{n!}\sum_{k=n}^\infty{|a_k|\left.\frac{d^n}{dx^n}(-1)^k\phi_k(x;1)\right|_{x=0}} \\
	&\le \frac{K_2}{n!}\sum_{k=n}^\infty{\frac{1}{q^\frac{k(k-1)}{2} e^{k^{1+\gamma}}}2^k q^{\frac{(k-n)(k-n-1)}{2}}\frac{k!}{n!(k-n)!}} \\
	&= \frac{K_2 q^{\frac{n(n+1)}{2}}}{(n!)^2}\sum_{k=n}^\infty{\frac{2^k}{q^{kn}e^{k^{1+\gamma}}}\frac{k!}{(k-n)!}} \le \frac{K_2 q^{\frac{n(n+1)}{2}}}{(n!)^2}\sum_{k=n}^\infty{\frac{2^k k^n}{q^{kn}e^{k^{1+\gamma}}}} \\
	&\le K_3 q^{\frac{n(n+1)}{2}} \frac{e^{2n}}{n^{2n+1}}\frac{2^n n^n}{q^{n^2}e^{n^{1+\gamma}}} = \frac{K_3 (2e^2)^n}{n^{n+1}q^{\frac{n(n-1)}{2}}e^{n^{1+\gamma}}}.
\end{align*}
This shows that $f^*$ is an entire function of log-order
\[
	\sigma_{\log}(f^*) \le 1+\frac{1}{\gamma}.
\]
Since $\gamma\in(1,\frac{1}{\sigma_{\log}-1})$ was arbitrary, we have $\sigma_{\log}(f^*)\le\sigma_{\log}$.

\item We next show that $\sigma_{\log}(\mu_q(\cdot;f))\le\sigma_{\log}(f^*)$.\ \ For every $r>0$, writing $N:=\nu_q(r;f)$ we have
\[
	\mu_q(r;f)=|a_N|2^Nq^{\frac{N(N-1)}{2}}\prod_{k=0}^{N-1}\left(r+\frac{q^k+q^{-k}}{2}\right)\le f^*(-r)\le M(r;f^*),
\]
so we immediately obtain
\[
	\sigma_{\log}(\mu_q(\cdot;f))=\limsup_{r\to\infty}{\frac{\ln\ln\mu_q(r;f)}{\ln\ln r}}\le \limsup_{r\to\infty}{\frac{\ln\ln M(r;f^*)}{\ln\ln r}} =\sigma_{\log}(f^*).
\]

\item Now we show that $\displaystyle 1+\limsup_{r\to\infty}{\frac{\ln\nu_q(r;f)}{\ln\ln r}}\le\sigma_{\log}(\mu_q(\cdot;f))$.\ \ For every $r>e$ and every $R>r$, writing $N:=\nu_q(r;f)$, we have
\begin{align*}
	\left[\frac{R+\frac{q^{N-1}+q^{1-N}}{2}}{r+\frac{q^{N-1}+q^{1-N}}{2}}\right]^N &\le \frac{R+1}{r+1}\frac{R+\frac{q+q^{-1}}{2}}{r+\frac{q+q^{-1}}{2}}\cdots\frac{R+\frac{q^{N-1}+q^{1-N}}{2}}{r+\frac{q^{N-1}+q^{1-N}}{2}} \\
	&= \frac{|\phi_N(-R;0)|}{|\phi_N(-r;0)|} \\
	&\le \frac{\mu_q(R;f)}{\mu_q(r;f)}.
\end{align*}
By Lemma~\ref{munuprop} (i) we have $\mu_q(r;f)\ge 1$ for every sufficiently large $r$, so
\begin{align}
	\label{W8} N\ln\frac{R+\frac{q^{N-1}+q^{1-N}}{2}}{r+\frac{q^{N-1}+q^{1-N}}{2}} \le \ln\mu_q(R;f).
\end{align}
In particular putting
\[
	R=\left(r+\frac{q^{N-1}+q^{1-N}}{2}\right)^{1+\ln\ln r}-\frac{q^{N-1}+q^{1-N}}{2}
\]
in \eqref{W8}, taking natural logarithms and dividing by $\ln\ln r$ on both sides, we get
\begin{align*}
	&\ \ \ \,\frac{\ln N + \ln\ln(r+\frac{q^{N-1}+q^{1-N}}{2}) + \ln\ln\ln r}{\ln\ln r} \\
	&\le \frac{\ln\ln\mu_q(R;f)}{\ln\ln r} \\
	&\le \frac{\ln\ln\mu_q(R;f)}{\ln\ln R}\frac{\ln\ln(r+\frac{q^{N-1}+q^{1-N}}{2}) + \ln(1+\ln\ln r)}{\ln\ln r}.
\end{align*}

Now for every $\gamma<\frac{1}{\sigma_{\log}(\mu_q(\cdot;f))-1}$ and every sufficiently large $r$, we have
\begin{align}
\label{W9}
\begin{aligned}
	&\frac{\ln N + \ln\ln(r+\frac{q^{N-1}+q^{1-N}}{2}) + \ln\ln\ln r}{\ln\ln r} \\
	&\ \ \ \ \ \ \ \,\le \left(1+\frac{1}{\gamma}\right)\frac{\ln\ln(r+\frac{q^{N-1}+q^{1-N}}{2}) + \ln(1+\ln\ln r)}{\ln\ln r}.
\end{aligned}
\end{align}
We claim that $q^{-N}\le r$ for every sufficiently large $r$, so that \eqref{W9} will give
\[
	\frac{\ln N + \ln\ln r+\ln\ln\ln r}{\ln\ln r} \le \left(1+\frac{1}{\gamma}\right)\frac{\ln\ln (2r)+\ln(1+\ln\ln r)}{\ln\ln r}
\]
for every sufficiently large $r$, which implies the desired inequality on taking limit superior as $r\to\infty$.\ \ To prove this claim, we observe that if on the contrary there exists some sequence $\{r_n\}_{n\in\mathbb{N}}$ of positive real numbers increasing to $\infty$ such that $q^{-N_n}>r_n$ for every $n\in\mathbb{N}$, where $N_n:=\nu_q(r_n;f)$, then \eqref{W9} gives
\begin{align*}
	1&<\frac{\ln N_n + \ln\ln q^{-1}}{\ln\ln r_n} \\
	&\le \left(1+\frac{1}{\gamma}\right)\frac{\ln\ln(2q^{-N_n}) + \ln(1+\ln\ln r_n)}{\ln\ln r_n} \\
	&\ \ \ \,+ \frac{\ln\ln q^{-1}-\ln\ln(q^{-N_n}) - \ln\ln\ln r_n}{\ln\ln r_n} \\
	&= \frac{1}{\gamma}\frac{\ln N_n + \ln\ln q^{-1}}{\ln\ln r_n} + o(1)
\end{align*}
as $n\to\infty$, and thus
\begin{align*}
	1-\frac{1}{\gamma}&<\left(1-\frac{1}{\gamma}\right)\frac{\ln N_n + \ln\ln q^{-1}}{\ln\ln r_n}\le o(1)
\end{align*}
as $n\to\infty$, which is a contradiction as $\sigma_{\log}<2$ enables one to choose $\gamma>1$.

\item Finally we show that $\displaystyle\sigma_{\log}(\mu_q(\cdot;f))\le 1+\limsup_{r\to\infty}{\frac{\ln\nu_q(r;f)}{\ln\ln r}}$.\ \ Write $N:=\nu_q(r;f)$ for every $r>0$.\ \ Since $|a_N|\le 1$ for every sufficiently large $r$, we have
\begin{align*}
	&\ \ \ \,\ln\mu_q(r;f) \\
	&= \ln |a_N| +N\ln 2 - \frac{N(N-1)}{2}\ln q^{-1} + \sum_{k=0}^{N-1}{\ln\left(r+\frac{q^k+q^{-k}}{2}\right)} \\
	&=\ln |a_N| +N\ln 2 - \frac{N(N-1)}{2}\ln q^{-1} + N\ln r + \sum_{k=0}^{N-1}{\ln\left(1+\frac{q^k+q^{-k}}{2r}\right)} \\
	&\le N(\ln r+\ln 2) + \sum_{k=0}^{N-1}{\ln\left(1+\frac{q^k+q^{-k}}{2e^{N^\gamma}}\right)} \\
	&\le N(\ln r+\ln 2) + \sum_{k=0}^{N-1}{\ln\left(1+\frac{q^k+q^{-k}}{2e^{k^\gamma}}\right)} \\
	&= N(\ln r + O(1))
\end{align*}
as $r\to\infty$.\ \ This proves the desired inequality.
\end{enumerate}
\hfill \qed

\bigskip
The following is a new Askey-Wilson series analogue of the Lindel\"{o}f-Pringsheim theorem.\ \ It relates the order of the maximal term of an Askey-Wilson series and its coefficients.\ \ In fact one can apply the same technique to obtain a similar result for Newton series under the setting in Ishizaki and Yanagihara's \cite{IY}.
\begin{theorem}
\label{AWLP}
Let $\displaystyle f(x):=\sum_{k=0}^\infty{a_k\phi_k(x;1)}$ be a non-constant entire function of log-order smaller than $2$.\ \ Then
\[
	\frac{1}{\sigma_{\log}-1}=\liminf_{n\to\infty}{\frac{\ln\ln\frac{1}{|a_n|}}{\ln n}} -1.
\]
\end{theorem}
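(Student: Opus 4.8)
The plan is to set $\lambda:=\liminf_{n\to\infty}\frac{\ln\ln\frac{1}{|a_n|}}{\ln n}$ and to prove the two inequalities $\frac{1}{\sigma_{\log}-1}\le\lambda-1$ and $\frac{1}{\sigma_{\log}-1}\ge\lambda-1$ separately.  Since $\sigma_{\log}<2$ we have $\frac{1}{\sigma_{\log}-1}>1$, so the first inequality will in particular force $\lambda>2$ and keep every later manipulation meaningful; the degenerate case $\sigma_{\log}=1$ corresponds to $\lambda=+\infty$ and is covered by the same argument in the limit.

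For $\frac{1}{\sigma_{\log}-1}\le\lambda-1$ I would reuse the coefficient estimate already obtained in step (1) of the proof of Lemma~\ref{muorder}: for each $\gamma\in\bigl(1,\tfrac{1}{\sigma_{\log}-1}\bigr)$ one has $|a_n|\le K_2\,q^{-n(n-1)/2}e^{-n^{1+\gamma}}$ for all large $n$, with the Juneja--Kapoor--Bajpai formula \cite{JKB} for the Taylor coefficients of $f$ and Proposition~\ref{C}(ii) as the inputs.  Since $1+\gamma>2$, the factor $n^{1+\gamma}$ dominates the $O(n^2)$ term, so $\ln\frac{1}{|a_n|}\ge(1-o(1))n^{1+\gamma}$ and hence $\ln\ln\frac{1}{|a_n|}\ge(1+\gamma)\ln n+O(1)$, giving $\lambda\ge 1+\gamma$; letting $\gamma\uparrow\tfrac{1}{\sigma_{\log}-1}$ yields $\lambda-1\ge\tfrac{1}{\sigma_{\log}-1}$.

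For the reverse inequality I would run a direct maximal-term estimate on the Askey--Wilson series.  The elementary fact that makes this work is the bound $|\phi_n(x;1)|\le\bigl(2(r+1)\bigr)^n$ for $|x|=r$: starting from $|\phi_n(x;1)|=2^nq^{n(n-1)/2}\prod_{j=0}^{n-1}\bigl|x-\tfrac{q^j+q^{-j}}{2}\bigr|\le 2^nq^{n(n-1)/2}\prod_{j=0}^{n-1}(r+q^{-j})$ (using $\tfrac{q^j+q^{-j}}{2}\le q^{-j}$) and then $r+q^{-j}=q^{-j}(rq^j+1)\le q^{-j}(r+1)$, the factors $q^{n(n-1)/2}$ and $q^{-n(n-1)/2}$ cancel.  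Fixing $\gamma'\in(1,\lambda-1)$, the definition of $\lambda$ gives $|a_n|<e^{-n^{\gamma'+1}}$ for all large $n$, so $M(r;f)\le P(r)+\sum_n e^{-n^{\gamma'+1}}\bigl(2(r+1)\bigr)^n$ for a suitable polynomial $P$.  A standard maximal-term computation --- maximizing $t\ln(4r)-t^{\gamma'+1}$ over $t>0$ at $t_\ast\asymp(\ln r)^{1/\gamma'}$, exactly as in the classical Lindel\"of--Pringsheim estimate --- bounds the sum, and hence $M(r;f)$ (the polynomial $P$ being negligible), by $\exp\bigl((1+o(1))C(\ln r)^{1+1/\gamma'}\bigr)$.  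Thus $\sigma_{\log}\le 1+\tfrac{1}{\gamma'}$, i.e. $\tfrac{1}{\sigma_{\log}-1}\ge\gamma'$, and letting $\gamma'\uparrow\lambda-1$ completes the proof.

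I expect the first inequality to be essentially free given what is already done for Lemma~\ref{muorder}.  The real content is the second inequality, where the points needing care are (a) noticing that the $q$-dependent factors in $\phi_n(x;1)$ telescope so cleanly that the growth estimate collapses to the classical one, and (b) the routine but slightly fussy bookkeeping in the saddle-point bound --- verifying that the finitely many exceptional coefficients, the polynomial $P(r)$, and the tail $\sum_{n\gg(\ln r)^{1/\gamma'}}$ of the series each contribute nothing to $\sigma_{\log}$.
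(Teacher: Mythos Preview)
Your argument is correct, and the second inequality takes a genuinely different route from the paper.  The paper proves both directions relative to the auxiliary quantity $\sigma_{\log}(\mu_q(\cdot;f))$: it first shows $L-1=\dfrac{1}{\sigma_{\log}(\mu_q(\cdot;f))-1}$ by sandwiching via the Askey--Wilson maximal term $\mu_q(r;f)$ (plugging $r=e^{n^{\gamma-\varepsilon}}$ into the definition for one direction, and bounding $\mu_q$ termwise for the other), and only at the very end invokes Theorem~\ref{muorderequal}---proved later in \S\ref{sec:Proofs} using the Wiman--Valiron estimate of Lemma~\ref{WVestimate}---to identify $\sigma_{\log}(\mu_q(\cdot;f))=\sigma_{\log}$.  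You bypass $\mu_q$ entirely: for the first inequality you reuse the same coefficient bound $|a_n|\le K_2\,q^{-n(n-1)/2}e^{-n^{1+\gamma}}$ that the paper derives in step~(1) of Lemma~\ref{muorder}, but conclude $\lambda\ge 1+\gamma$ directly; for the second, your telescoping observation $|\phi_n(x;1)|\le(2(r+1))^n$ lets you bound $M(r;f)$ itself rather than $\mu_q(r;f)$, so the classical Lindel\"of--Pringsheim maximal-term computation immediately gives $\sigma_{\log}\le 1+\tfrac{1}{\gamma'}$.  The payoff of your route is self-containment: Theorem~\ref{AWLP} no longer depends on Theorem~\ref{muorderequal} or any of the Wiman--Valiron machinery, which unclutters the dependency chain (the paper has to remark explicitly that no circularity arises, since Theorem~\ref{qexceptional} uses Theorem~\ref{AWLP} which in turn uses Theorem~\ref{muorderequal}).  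The paper's detour, on the other hand, delivers as a byproduct the exact value of $\sigma_{\log}(\mu_q(\cdot;f))$, which is of independent interest.
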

\begin{proof}
We denote $\displaystyle L:=\liminf_{n\to\infty}{\frac{\ln\ln\frac{1}{|a_n|}}{\ln n}}$.
\begin{enumerate}
\item We first show that $L-1\ge\frac{1}{\sigma_{\log}(\mu_q(\cdot;f))-1}$.\ \ By Lemma~\ref{muorder} (i) we have $\sigma_{\log}(\mu_q(\cdot;f))<2<+\infty$, so we let $\gamma\in(1,\frac{1}{\sigma_{\log}(\mu_q(\cdot;f))-1})$ and $\varepsilon\in(0,\gamma-1)$ be arbitrary.\ \ Then for every sufficiently large $n$, we have
\begin{align*}
	&\ \ \ \,|a_n|2^n q^{\frac{n(n-1)}{2}}\prod_{k=0}^{n-1}{\left(e^{n^{\gamma-\varepsilon}}+\frac{q^k+q^{-k}}{2}\right)} \\
	&\le \mu_q(e^{n^{\gamma-\varepsilon}};f) \le e^{(\ln e^{n^{\gamma-\varepsilon}})^{1+\frac{1}{\gamma}}} = e^{n^{1+\gamma-\varepsilon(1+\frac{1}{\gamma})}}.
\end{align*}
So
\begin{align*}
	\ln|a_n| &\le n^{1+\gamma-\varepsilon(1+\frac{1}{\gamma})} -n\ln 2 +\frac{n(n-1)}{2}\ln q^{-1} -\sum_{k=0}^{n-1}{\ln \left(e^{n^{\gamma-\varepsilon}}+\frac{q^k+q^{-k}}{2}\right)} \\
	&\le n^{1+\gamma-\varepsilon(1+\frac{1}{\gamma})} -n\ln 2 +\frac{n(n-1)}{2}\ln q^{-1} - n\ln e^{n^{\gamma-\varepsilon}} \\
	&= -n^{1+\gamma-\varepsilon}(1+o(1))
\end{align*}
as $n\to\infty$. \ \ This gives
\[
	\ln\frac{1}{|a_n|} \ge n^{1+\gamma-\varepsilon}(1+o(1))
\]
as $n\to\infty$, hence
\[
	\frac{\ln\ln\frac{1}{|a_n|}}{\ln n} \ge 1+\gamma-\varepsilon+o(1)
\]
as $n\to\infty$, and therefore $L-1\ge \gamma-\varepsilon$.\ \ Since $\varepsilon\in(0,\gamma-1)$ and $\gamma\in(1,\frac{1}{\sigma_{\log}(\mu_q(\cdot;f))-1})$ are arbitrary, we have $L-1\ge\frac{1}{\sigma_{\log}(\mu_q(\cdot;f))-1}$.
\item Next we show that $L-1\le\frac{1}{\sigma_{\log}(\mu_q(\cdot;f))-1}$.\ \ By the last paragraph and Lemma~\ref{muorder} (i), we have $L-1>1$, so we let $\beta\in(1,L-1)$ be arbitrary.\ \ Then $|a_n|\le e^{-n^{1+\beta}}$ for every sufficiently large $n$.\ \ Now for each $r>0$, since $\beta>1$, we have $n^\beta-\ln 2-n\ln q^{-1}\ge\frac{1}{2}n^\beta\ge \ln r$ for every sufficiently large $n$, and for these $n$ we have $e^{n^\beta}\ge 2q^{\frac{n-1}{2}}(r+q^{-n})$, which gives
\begin{align*}
	&\ \ \ \,|a_n|2^n q^{\frac{n(n-1)}{2}}(r+1)\left(r+\frac{q+q^{-1}}{2}\right)\cdots\left(r+\frac{q^{n-1}+q^{1-n}}{2}\right) \\
	&\le e^{-n^{1+\beta}}2^n q^{\frac{n(n-1)}{2}}(r+1)\left(r+\frac{q+q^{-1}}{2}\right)\cdots\left(r+\frac{q^{n-1}+q^{1-n}}{2}\right) \\
	&\le (r+q^{-n})^{-n}(r+1)\left(r+\frac{q+q^{-1}}{2}\right)\cdots\left(r+\frac{q^{n-1}+q^{1-n}}{2}\right) \le 1.
\end{align*}
Let $\displaystyle a:=\max_{n\in\mathbb{N}_0}{|a_n|}$.\ \ Then for every sufficiently large $r>0$, we have
\begin{align*}
	\mu_q(r;f) &= \max\left\{|a_n|2^n q^{\frac{n(n-1)}{2}}\prod_{k=0}^{n-1}{\left(r+\frac{q^k+q^{-k}}{2}\right)}:n\le(2\ln r)^{\frac{1}{\beta}}\right\} \\
	&\le a\cdot 2^{(2\ln r)^{\frac{1}{\beta}}}r^{(2\ln r)^{\frac{1}{\beta}}+1},
\end{align*}
and so
\[
	\sigma_{\log}(\mu_q(\cdot;f)) = \limsup_{r\to\infty}{\frac{\ln\ln \mu_q(r;f)}{\ln\ln r}} \le 1+\frac{1}{\beta}.
\]
Since $\beta\in(1,L-1)$ is arbitrary, we have $\sigma_{\log}(\mu_q(\cdot;f))\le 1+\frac{1}{L-1}$, and therefore $L-1\le\frac{1}{\sigma_{\log}(\mu_q(\cdot;f))-1}$.
\end{enumerate}
The fact that $\sigma_{\log}=\sigma_{\log}(\mu_q(\cdot;f))$ follows from Theorem~\ref{muorderequal}.
\end{proof}

\section{Proofs of the main results}\label{sec:Proofs}

In the remainder of this paper, we will focus on non-constant entire functions of log-order smaller than $2$ and follow an approach that is similar to \cite{IY} which deals with Newton series expansions, or \cite{Cheng} which deals with Wilson series expansions.\ \ We will show that such an entire function $f$ behaves locally like a polynomial consisting of the few terms around the maximal term in its Askey-Wilson series expansion.\ \ To do this, we write $N:=\nu_q(r;f)$ and aim to show that those terms $a_n\phi_n(x;1)$ in the Askey-Wilson series that are ``far away" from the maximal term $a_N\phi_N(x;1)$ are small, by defining \textit{comparison sequences} $\{\alpha_n\}_n$ and $\{\rho_n\}_n$ and comparing the ratio $\left|\frac{a_n\phi_n(r;1)}{a_N\phi_N(r;1)}\right|$ with $\frac{\alpha_n\rho_N^n}{\alpha_N\rho_N^N}$, whose growth can be controlled.\ \ Please see Hayman \cite{Hayman2} for a comprehensive survey on the modern approach of the Wiman-Valiron method.

\begin{definition}
\label{testseq}
	In the remainder of this paper, we pick a $\delta\in(0,1)$ and define \textbf{\textit{comparison sequences}} $\{\alpha_n\}_{n\in\mathbb{N}_0}$ and $\{\rho_n\}_{n\in\mathbb{N}_0}$ by
	\[
		\alpha_n:=e^{\int_0^n{\alpha(t)dt}} \hspace{20px}\mbox{and}\hspace{20px} \rho_n:=e^{-\alpha(n)},
	\]
	where $\alpha:[0,\infty)\to\mathbb{R}$ is the function
	\[
		\alpha(t):=-\frac{1}{\delta}t^\delta.
	\]
\end{definition}

We immediately have $\rho_0=1<\frac{\alpha_0}{\alpha_1}$ and $\rho_n\in(\frac{\alpha_{n-1}}{\alpha_n},\frac{\alpha_n}{\alpha_{n+1}})$ for every $n\in\mathbb{N}$, so that $\{\rho_n\}_{n\in\mathbb{N}_0}$ is a strictly increasing sequence.

\bigskip
We are interested in only those radii $r$ on which $\left|\frac{a_n\phi_n(r;1)}{a_N\phi_N(r;1)}\right|$ can be controlled by $\frac{\alpha_n\rho_N^n}{\alpha_N\rho_N^N}$, so we give them a name.

\begin{definition}
\label{qnormal}
Let $\displaystyle f(x):=\sum_{k=0}^\infty{a_k \phi_k(x;1)}$ be a non-constant entire function and let $\gamma>1$.\ \ A number $r>0$ is said to be \textbf{\textit{$q$-normal}} (for the Askey-Wilson series $f$, with respect to $\gamma$ and the comparison sequences $\{\alpha_n\}_{n\in\mathbb{N}_0}$ and $\{\rho_n\}_{n\in\mathbb{N}_0}$) if there exists $N\in\mathbb{N}_0$ such that for every $n\in\mathbb{N}_0$,
\begin{align*}
	|a_n \phi_n(r;1)| &\le |a_N \phi_N(r;1)|\frac{\alpha_n}{\alpha_N}\rho_N^{n-N} \hspace{60px}&&\mbox{if $n\ge N$, and} \\
	|a_n \phi_n(r;1)| &\le |a_N \phi_N(r;1)|(1+\varepsilon_{n,N})\frac{\alpha_n}{\alpha_N}\rho_N^{n-N} &&\mbox{if $n<N$},
\end{align*}
where $\varepsilon_{n,N}:=\frac{2q^{-n}}{e^{N^\gamma}}+\cdots+\frac{2q^{1-N}}{e^{N^\gamma}}\le\frac{2q^{1-N}}{e^{N^\gamma}}[N]_q$.\ \ Positive real numbers that are not $q$-normal are said to be \textbf{\textit{$q$-exceptional}}.\ \ Note that if $N$ is sufficiently large (depending only on the choices of $q$ and $\gamma$), we have $\frac{2q^{1-N}}{e^{N^\gamma}}[N]_q<1$, so that $\varepsilon_{n,N}<1$.\ \ We denote by $M_0$ the smallest non-negative integer satisfying this inequality, i.e. $\frac{2q^{1-N}}{e^{N^\gamma}}[N]_q<1$ for every $N\ge M_0$.
\end{definition}

The inequality requirements in Definition~\ref{qnormal} are motivated by the following theorem, which asserts that the majority of non-negative numbers are $q$-normal.

\begin{theorem}
\label{qexceptional}
Let $f$ be a non-constant entire function of log-order $\sigma_{\log}<2$ and let $\gamma\in(1,\frac{1}{\sigma_{\log}-1})$.\ \ Then the set
\[
	E:=\{r\in[1,\infty): \mbox{r is $q$-exceptional for the Askey-Wilson series $f$}\}
\]
has zero logarithmic density.
\end{theorem}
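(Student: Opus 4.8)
The plan is to show that the set of $q$-exceptional radii can be covered by a union of intervals whose total logarithmic measure up to $r$ is $o(\ln r)$. The starting point is to convert the two inequalities defining $q$-normality in Definition~\ref{qnormal} into conditions on a single function of $r$. For fixed $n$, the ratio $\bigl|\tfrac{a_n\phi_n(r;1)}{a_N\phi_N(r;1)}\bigr|$ is (up to the explicit $q$-dependent product factors $\prod(r+\tfrac{q^k+q^{-k}}{2})$) a monotone function of $r$; the comparison sequences $\{\alpha_n\}$, $\{\rho_n\}$ are designed precisely so that $\tfrac{\alpha_n}{\alpha_N}\rho_N^{n-N}$ has a clean logarithm, namely $\int_N^n(\alpha(t)-\alpha(N))\,dt$, which by the concavity of $\alpha(t)=-\tfrac1\delta t^\delta$ is negative and grows like $-|n-N|^2 b(N)$ for $|n-N|$ not too large. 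So I would first record, for each $r$ with $N=\nu_q(r;f)$, that the maximal term $\mu_q(r;f)=|a_N\phi_N(-r;1)|$ dominates every other term by definition, and that $q$-normality fails only if for some $n$ the ratio exceeds the comparison bound.

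The second step is the standard Valiron-type bound on the set where the comparison bound is violated. Following Hayman \cite{Hayman2} and Ishizaki--Yanagihara \cite{IY}, I would introduce the function $r\mapsto\ln\mu_q(r;f)$, which by Lemma~\ref{munuprop} is continuous, increasing on $[R,\infty)$, and convex in $\ln r$ once one substitutes $t=\ln r$ (the convexity is inherited from the piecewise-linear structure $\ln\mu_q = \max_n(\text{linear in }\ln(r+\cdots)))$). The key mechanism: $r$ is $q$-exceptional only if $N=\nu_q(r;f)$ is attained but some term with $|n-N|$ large carries too much weight, and a term of index $n$ can be ``large relative to the $N$th'' only on a range of $r$ whose logarithmic length is controlled by how fast $\nu_q$ moves. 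Concretely, one shows $\nu_q$ can only increase through any value $N$ on a $\ln r$-interval of length $\gtrsim \rho_N^{-1}-\rho_{N-1}^{-1}$ or so, and summing the ``bad'' contributions over $N$, using Lemma~\ref{muorder}(ii) to bound $\nu_q(r;f)^\gamma\le\ln r$ for large $r$, gives that the bad set up to $r$ has logarithmic measure $O((\ln r)^{1/\gamma})=o(\ln r)$ since $\gamma>1$.

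The third step handles the refinement caused by the extra factor $(1+\varepsilon_{n,N})$ in the $n<N$ case. Here the point is that $\varepsilon_{n,N}\le\tfrac{2q^{1-N}}{e^{N^\gamma}}[N]_q$, which by Lemma~\ref{muorder}(ii) (giving $e^{N^\gamma}\le r$, hence $q^{-N}\le r$ for large $r$) tends to $0$; so the $n<N$ inequality is genuinely weaker than the $n\ge N$ one for large $r$ and contributes nothing new to the exceptional set beyond what the two-sided analysis already covers. I would also invoke Lemma~\ref{gg} to control the tail $n\ge N$ uniformly (each term $|a_n\phi_n(-r;1)|\le K_nM(r;f)$ once $r\ge e^{n^\gamma}$, with $K_n\downarrow 1$), which is what lets one truncate the infinite family of inequalities to finitely many relevant indices near $N$ for each $r$.

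The main obstacle I anticipate is the bookkeeping in step two: unlike the classical power-series case where $\phi_n(x)=x^n$ and the ratio of consecutive terms is simply $|a_{n+1}/a_n|\,r$, here the product factors $\prod_{k=0}^{n-1}(r+\tfrac{q^k+q^{-k}}{2})$ introduce $n$-dependent and $q$-dependent shifts, so the ``ratio of term $n$ to term $N$'' is not a pure power of $r$ and its monotonicity and logarithmic derivative in $r$ must be estimated carefully (this is exactly the $q$-deformation difficulty flagged in the introduction). I expect to absorb these shifts by comparing with $\phi_n(-r;0)=\prod_{k=0}^{n-1}(r+\tfrac{q^k+q^{-k}}{2})$ and noting $r\le r+\tfrac{q^k+q^{-k}}{2}\le r(1+\tfrac{q^k+q^{-k}}{2r})$, so that over the relevant range $|n-N|=o(N)$ the correction is $1+o(1)$ and the clean Valiron estimate survives; verifying that the accumulated correction is harmless, uniformly over the exceptional set, is the technical heart of the argument.
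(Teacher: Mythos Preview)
Your outline names the right references but omits the device that actually drives both Hayman's argument and the paper's own proof: an \emph{auxiliary} series. One does not bound the $q$-exceptional set by direct convexity estimates on $\ln\mu_q(\cdot;f)$ or on the jump structure of $\nu_q(\cdot;f)$; instead one forms
\[
F(x):=\sum_{k\ge0}A_k\,\phi_k(x;1),\qquad A_k:=\frac{|a_k|}{\alpha_k},
\]
checks via the coefficient estimate \eqref{W13} and Theorem~\ref{AWLP} that $F$ is entire of log-order at most $1+1/\gamma<2$, and then proves by a direct ratio computation that whenever $M=\nu_q(\rho;F)\ge M_0$ the number $r=\rho\rho_M$ is $q$-normal for $f$. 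This exhibits $q$-normal radii explicitly and traps $E$ inside gaps $[R_k\rho_{k-1},R_k\rho_k)$ (with $R_k$ the jump points of $\nu_q(\cdot;F)$), whose total logarithmic measure up to $r$ telescopes to $\ln R_{M_0}+\ln\rho_{n+1}=O(n^\delta)=o(\ln r)$ by Lemma~\ref{muorder}(ii).

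Your step two tries instead to read the exceptional set off the jumps of $\nu_q(\cdot;f)$ itself, but $q$-exceptionality is a condition on the ratios $|a_n\phi_n|/|a_N\phi_N|$ relative to the comparison bound $\alpha_n\rho_N^{n-N}/\alpha_N$, and its failure is not localized at those jumps; your sentence that $\nu_q$ ``can only increase through any value $N$ on a $\ln r$-interval of length $\gtrsim\rho_N^{-1}-\rho_{N-1}^{-1}$'' also has the inequality pointing the wrong way for an upper bound on $E$. The auxiliary $F$ is precisely the trick that converts the comparison-sequence condition into a central-index statement for a \emph{different} function, whose jump points then carve out the gaps. A related correction to your step three: the factor $(1+\varepsilon_{n,N})$ in Definition~\ref{qnormal} is not incidental slack that ``contributes nothing new''; in the $n<M$ ratio computation the $q$-deformed product $\prod_{k=n}^{M-1}\bigl(1+\tfrac{q^k+q^{-k}}{2\rho}\bigr)$ produces exactly an $\varepsilon_{n,M}$-sized error, and the definition of $q$-normal is calibrated to absorb it. That is where the $q$-shift obstacle you anticipate actually enters the argument, not in a tail-sum comparison.
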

\begin{proof}
The result follows trivially in case $f$ is a polynomial, because every sufficiently large real number is $q$-normal, so that $E$ is bounded.\ \ From now on we assume that $f$ is transcendental.\ \ We write $\displaystyle f(x)=\sum_{k=0}^\infty{a_k \phi_k(x;1)}$.\ \ Since $\nu_q(\cdot;f)$ is integer-valued, non-decreasing and right-continuous by Lemma~\ref{munuprop}, we let $\{r_n\}_{n\in\mathbb{N}_0}$ be the monotonic increasing sequence of non-negative numbers such that $r_0:=0$ and $\nu_q(r;f)=n$ for every $r\in[r_n,r_{n+1})\setminus\{0\}$.\ \ (If $n$ is not in the range of $\nu_q(\cdot;f)$, then $r_{n+1}=r_n$.)

Now by the choice of $\{r_n\}_{n\in\mathbb{N}_0}$ and the continuity of $\mu_q(\cdot;f)$ by Lemma~\ref{munuprop}, for every $j\in\mathbb{N}_0$ and $k\in\mathbb{N}$ satisfying $r_j<r_{j+1}=\cdots=r_{j+k}$, we have
\begin{align*}
	&\ \ \ \,|a_{j+k}|(r_{j+k}+1)\left(r_{j+k}+\frac{q+q^{-1}}{2}\right)\cdots\left(r_{j+k}+\frac{q^{j+k-1}+q^{1-j-k}}{2}\right) \\
	&\le \mu_q(r_{j+k};f) = \mu_q(r_{j+1};f) \\
	&= \lim_{r\to r_{j+1}^-}{\mu_q(r;f)} = \lim_{r\to r_{j+1}^-}{|a_j|(r+1)\left(r+\frac{q+q^{-1}}{2}\right)\cdots\left(r+\frac{q^{j-1}+q^{1-j}}{2}\right)} \\
	&= |a_j|(r_{j+1}+1)\left(r_{j+1}+\frac{q+q^{-1}}{2}\right)\cdots\left(r_{j+1}+\frac{q^{j-1}+q^{1-j}}{2}\right) \\
	&= |a_j|(r_{j+k}+1)\left(r_{j+k}+\frac{q+q^{-1}}{2}\right)\cdots\left(r_{j+k}+\frac{q^{j-1}+q^{1-j}}{2}\right).
\end{align*}
This gives
\begin{align*}
	\frac{|a_{j+k}|}{|a_j|} &\le \frac{1}{(r_{j+k}+\frac{q^j+q^{-j}}{2})\cdots(r_{j+k}+\frac{q^{j+k-1}+q^{1-j-k}}{2})} \\
	&= \frac{1}{(r_{j+1}+\frac{q^j+q^{-j}}{2})\cdots(r_{j+k}+\frac{q^{j+k-1}+q^{1-j-k}}{2})}
\end{align*}
whenever $r_j<r_{j+1}=\cdots=r_{j+k}$.\ \ So for every $n\in\mathbb{N}_0$, taking products for the appropriate $j$'s we get
\begin{align}
\label{W1}\frac{|a_n|}{|a_0|} \le \frac{1}{(r_1+1)(r_2+\frac{q+q^{-1}}{2})\cdots(r_n+\frac{q^{n-1}+q^{1-n}}{2})}.
\end{align}

Since $\rho_n\in(\frac{\alpha_{n-1}}{\alpha_n},\frac{\alpha_n}{\alpha_{n+1}})$ for every $n\in\mathbb{N}$, we have
\begin{align}
\label{W2}\frac{\alpha_n}{\alpha_0}\ge\frac{1}{\rho_1\rho_2\cdots\rho_n}.
\end{align}
Combining \eqref{W1} and \eqref{W2}, we obtain
\[
	\frac{|a_n|}{\alpha_n}\le\frac{|a_0|}{\alpha_0}\frac{\rho_1}{r_1+1}\frac{\rho_2}{r_2+\frac{q+q^{-1}}{2}}\cdots\frac{\rho_n}{r_n+\frac{q^{n-1}+q^{1-n}}{2}}.
\]
Lemma~\ref{muorder} (ii) implies that $r_n>e^{n^\gamma}$ for every sufficiently large $n$, so there exists $K_0>0$ such that
\begin{align}
	\label{W13}A_n:=\frac{|a_n|}{\alpha_n}\le K_0\frac{|a_0|}{\alpha_0}\frac{e^{\frac{1^\delta}{\delta}}}{e^{1^\gamma}}\frac{e^{\frac{2^\delta}{\delta}}}{e^{2^\gamma}}\cdots\frac{e^{\frac{n^\delta}{\delta}}}{e^{n^\gamma}} \le K_0\frac{|a_0|}{\alpha_0}\frac{e^{\frac{1}{\delta}n^{1+\delta}}}{e^{\frac{1}{1+\gamma}n^{1+\gamma}}}
\end{align}
for every sufficiently large $n$.\ \ \eqref{W13} implies that
\[
	\liminf_{n\to\infty}\frac{\ln\ln\frac{1}{|A_n|}}{\ln n} \ge \lim_{n\to\infty}\frac{\ln[\frac{1}{1+\gamma}n^{1+\gamma}-\frac{1}{\delta}n^{1+\delta}-\ln(K_0\frac{|a_0|}{\alpha_0})]}{\ln n} = 1+\gamma,
\]
so by Theorem~\ref{AWLP}, the function $F$ defined by the Askey-Wilson series
\[
	F(x):=\sum_{k=0}^\infty{A_k\phi_k(x;1)}
\]
is a transcendental entire function of log-order at most $1+\frac{1}{\gamma}$.

Now suppose that $\rho>0$ satisfies $M=\nu_q(\rho;F)\ge M_0$, i.e. $\frac{2q^{1-M}}{e^{M^\gamma}}[M]_q<1$ as in Definition~\ref{qnormal} (such $\rho$ exists since $F$ is transcendental, see Lemma~\ref{munuprop} (ii)).\ \ Then noting that $\rho_M>1$, for every $n>M$ we have
\begin{align*}
	\frac{|a_n\phi_n(\rho\rho_M;1)|}{|a_M\phi_M(\rho\rho_M;1)|} &= \frac{\alpha_n A_n|\phi_n(\rho\rho_M;1)|}{\alpha_M A_M|\phi_M(\rho\rho_M;1)|} \\
	&= \frac{\alpha_n A_n}{\alpha_M A_M}\frac{2^n q^{\frac{n(n-1)}{2}}}{2^M q^{\frac{M(M-1)}{2}}}\left(\rho\rho_M +\frac{q^M+q^{-M}}{2}\right)\cdots\left(\rho\rho_M + \frac{q^{n-1}+q^{1-n}}{2}\right) \\
	&\le \frac{\alpha_n A_n}{\alpha_M A_M}\frac{2^n q^{\frac{n(n-1)}{2}}}{2^M q^{\frac{M(M-1)}{2}}}\left(\rho +\frac{q^M+q^{-M}}{2}\right)\cdots\left(\rho + \frac{q^{n-1}+q^{1-n}}{2}\right)\rho_M^{n-M} \\
	&= \frac{\alpha_n A_n|\phi_n(\rho;1)|}{\alpha_M A_M|\phi_M(\rho;1)|}\rho_M^{n-M} \le \frac{\alpha_n}{\alpha_M}\rho_M^{n-M} < 1,
\end{align*}
while for every $n<M$, since $\displaystyle\sum_{k=n}^{M-1}{\frac{2q^{-k}}{e^{M^\gamma}}}\le\frac{2q^{1-M}}{e^{M^\gamma}}[M]_q<1$, we have
\begin{align*}
	&\ \ \ \,\frac{|a_n\phi_n(\rho\rho_M;1)|}{|a_M\phi_M(\rho\rho_M;1)|} \\
	&= \frac{\alpha_n A_n|\phi_n(\rho\rho_M;1)|}{\alpha_M A_M|\phi_M(\rho\rho_M;1)|} \\
	&= \frac{\alpha_n A_n}{\alpha_M A_M}\frac{2^n q^{\frac{n(n-1)}{2}}}{2^M q^{\frac{M(M-1)}{2}}}\frac{1}{(\rho\rho_M +\frac{q^n+q^{-n}}{2})\cdots(\rho\rho_M + \frac{q^{M-1}+q^{1-M}}{2})} \\
	&= \frac{\alpha_n A_n|\phi_n(\rho;1)|}{\alpha_M A_M|\phi_M(\rho;1)|}\rho_M^{n-M}\frac{(\rho\rho_M +\rho_M \frac{q^n+q^{-n}}{2})\cdots(\rho\rho_M + \rho_M\frac{q^{M-1}+q^{1-M}}{2})}{(\rho\rho_M +\frac{q^n+q^{-n}}{2})\cdots(\rho\rho_M + \frac{q^{M-1}+q^{1-M}}{2})} \\
	&\le \frac{\alpha_n}{\alpha_M}\rho_M^{n-M}\frac{(1 + \frac{q^n+q^{-n}}{2\rho})\cdots(1 + \frac{q^{M-1}+q^{1-M}}{2\rho})}{(1 + \frac{q^n+q^{-n}}{2\rho\rho_M})\cdots(1 + \frac{q^{M-1}+q^{1-M}}{2\rho\rho_M})} \\
	&\le \frac{\alpha_n}{\alpha_M}\rho_M^{n-M}\prod_{k=n}^{M-1}{\left(1 + \frac{q^k+q^{-k}}{2\rho}\right)} \\
	&\le \frac{\alpha_n}{\alpha_M}\rho_M^{n-M}\prod_{k=n}^{M-1}{\left(1 + \frac{q^{-k}}{e^{M^\gamma}}\right)} \\
	&\le \frac{\alpha_n}{\alpha_M}\rho_M^{n-M}(1+\varepsilon_{n,M}),
\end{align*}
where in the second last step we have used the inequality $\rho>e^{M^\gamma}$ which follows from Lemma~\ref{muorder} (ii), and in the last step we have used the inequality $\prod_k{(1+\frac{\lambda_k}{2})}\le 1+\sum_k{\lambda_k}$ which holds for every sequence $\{\lambda_k\}_k$ of non-negative numbers with $\sum_k{\lambda_k}<1$.\ \ We have thus shown that $r$ is $q$-normal for $f$ if there exists $\rho>0$ such that $r=\rho\rho_M$ where $M=\nu_q(\rho;F)\ge M_0$, i.e. if there exists $M\ge M_0$ such that $\nu_q(\frac{r}{\rho_M};F)=M$.\ \ Therefore if we let $\{R_n\}_{n\in\mathbb{N}}$ be the monotonic increasing sequence such that $\nu_q(R;F)=n$ for every $R\in[R_n,R_{n+1})$, then
\[
	E\subseteq [1,R_{M_0}\rho_{M_0})\cup\bigcup_{k=M_0+1}^\infty{[R_k\rho_{k-1},R_k \rho_k)}.
\]
Now for every $r\in[R_n \rho_n, R_{n+1}\rho_n)$, we have $r=R\rho_n$ for some $R\in[R_n,R_{n+1})$ and so $\nu_q(r;f)=n$ by the above computations.\ \ So by the definition of $\{r_n\}_{n\in\mathbb{N}_0}$ we have $r_n\le R_n\rho_n$.\ \ Therefore whenever $n\ge M_0$ and $r\in[r_n,r_{n+1})$, i.e. $\nu_q(r;f)=n$, we must have $r<R_{n+2}\rho_{n+1}$, and so
\[
	E\cap[1,r] \subseteq E\cap[1,R_{n+2}\rho_{n+1}) \subseteq [1,R_{M_0}\rho_{M_0})\cup\bigcup_{k=M_0+1}^{n+1}{[R_k\rho_{k-1},R_k \rho_k)},
\]
which implies that
\[
	\logmeas(E\cap[1,r]) \le \int_{1}^{R_{M_0}\rho_{M_0}}{\frac{dt}{t}}+\sum_{k=M_0+1}^{n+1}{\int_{R_k\rho_{k-1}}^{R_k\rho_k}{\frac{dt}{t}}} = \ln R_{M_0} + \ln\rho_{n+1}.
\]
Since $\displaystyle\limsup_{n\to\infty}\frac{\ln\rho_{n+1}}{\ln r_n} \le \lim_{n\to\infty}\frac{(n+1)^\delta}{\delta n^\gamma}= 0$, we have
\[
	\logdens E = \limsup_{r\to\infty}\frac{\logmeas(E\cap[1,r])}{\ln r} \le \limsup_{n\to\infty}\frac{\ln R_{M_0} + \ln\rho_{n+1}}{\ln r_n} = 0.
\]
\end{proof}

We call the set $E$ in Theorem~\ref{qexceptional} the \textit{$q$-exceptional set} for $f$.\ \ We note that $E$ depends not only on $f$, but also on the choice of $\gamma$ as well as the construction of the comparison sequences $\{\alpha_n\}$ and $\{\rho_n\}$ (which depends on the choice of $\delta$).

\bigskip
\begin{lemma}
\label{WVestimate}
Let $\displaystyle f(x)=\sum_{k=0}^\infty{a_k \phi_k(x;1)}$ be a non-constant entire function of log-order $\sigma_{\log}<2$, $\gamma\in(1,\frac{1}{\sigma_{\log}-1})$, and $E$ be the $q$-exceptional set for $f$.\ \ Then for every $r\in(0,\infty)\setminus E$ we have
\[
	\frac{|a_{N+k}\phi_{N+k}(-r;1)|}{\mu_q(r;f)} \le e^{-\frac{1}{2}k^2 b(N+k)}
\]
for every $k\in\mathbb{N}$ and
\[
	\frac{|a_{N-k}\phi_{N-k}(-r;1)|}{\mu_q(r;f)} \le \left[1+\frac{2q^{1-N}}{(1-q)e^{N^\gamma}}\right]e^{-\frac{1}{2}k^2 b(N)}
\]
for every $k\in\{0,1,\ldots,N-1\}$, where $N=\nu_q(r;f)$ and $b(N):=N^{-1+\delta}$.
\end{lemma}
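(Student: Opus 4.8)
The plan is to derive both estimates directly from the $q$-normality of $r$. Since $r\in(0,\infty)\setminus E$, the number $r$ is $q$-normal for the Askey--Wilson series $f$; moreover, inspecting the construction of $E$ in the proof of Theorem~\ref{qexceptional}---where every non-exceptional $r$ is of the form $\rho\rho_M$ with $\nu_q(\rho;F)=M=\nu_q(r;f)$---one sees that the index $N$ witnessing $q$-normality in Definition~\ref{qnormal} may be taken to be precisely $N=\nu_q(r;f)$. For each fixed $n$ the maximum of $|\phi_n(x;1)|$ over $|x|=r$ is attained at $x=-r$ (all the points $\tfrac{q^j+q^{-j}}{2}$ are positive), so $\mu_q(r;f)=|a_N\phi_N(-r;1)|$, in agreement with Definition~\ref{AWmunu}. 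Dividing the two defining inequalities of $q$-normality by $\mu_q(r;f)$ and substituting $n=N+k$ (for $k\ge1$), respectively $n=N-k$ (for $0\le k\le N-1$), then reduces the lemma to the two pointwise estimates
\[
	\frac{\alpha_{N+k}}{\alpha_N}\rho_N^{\,k}\le e^{-\frac12 k^2 b(N+k)},\qquad \frac{\alpha_{N-k}}{\alpha_N}\rho_N^{-k}\le e^{-\frac12 k^2 b(N)},
\]
together with the elementary bound $\varepsilon_{N-k,N}\le\frac{2q^{1-N}}{(1-q)e^{N^\gamma}}$.

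For the comparison-sequence estimates I would use the explicit formulas $\alpha_n=e^{\int_0^n\alpha(t)\,dt}$ and $\rho_n=e^{-\alpha(n)}$ with $\alpha(t)=-t^\delta/\delta$. Taking logarithms and cancelling,
\[
	\ln\!\left(\frac{\alpha_{N+k}}{\alpha_N}\rho_N^{\,k}\right)=\int_N^{N+k}\bigl(\alpha(t)-\alpha(N)\bigr)\,dt=-\frac1\delta\int_N^{N+k}\bigl(t^\delta-N^\delta\bigr)\,dt,
\]
and likewise $\ln\bigl(\tfrac{\alpha_{N-k}}{\alpha_N}\rho_N^{-k}\bigr)=-\tfrac1\delta\int_{N-k}^N\bigl(N^\delta-t^\delta\bigr)\,dt$. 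Since $s\mapsto\delta s^{\delta-1}$ is decreasing, $t^\delta-N^\delta=\int_N^t\delta s^{\delta-1}\,ds\ge\delta(N+k)^{\delta-1}(t-N)$ on $[N,N+k]$ and $N^\delta-t^\delta=\int_t^N\delta s^{\delta-1}\,ds\ge\delta N^{\delta-1}(N-t)$ on $[N-k,N]$; integrating gives $\int_N^{N+k}(t^\delta-N^\delta)\,dt\ge\tfrac12\delta k^2(N+k)^{\delta-1}$ and $\int_{N-k}^N(N^\delta-t^\delta)\,dt\ge\tfrac12\delta k^2N^{\delta-1}$, which yield the two displayed bounds with $b(m)=m^{-1+\delta}$ (the asymmetry between $b(N+k)$ and $b(N)$ reflecting which endpoint of the interval makes $\delta s^{\delta-1}$ smallest). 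For the error term, Definition~\ref{qnormal} already gives $\varepsilon_{N-k,N}\le\frac{2q^{1-N}}{e^{N^\gamma}}[N]_q$, and $[N]_q=\frac{1-q^N}{1-q}\le\frac1{1-q}$ gives the claimed bound; combining everything proves the lemma (for the bounded set of $r<1$ not in $E$ the estimates are trivial, $N$ being then bounded).

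The only genuinely delicate point is the first step: confirming that for $r\notin E$ the witness $N$ in Definition~\ref{qnormal} coincides with the central index $\nu_q(r;f)$ and that the maximal term equals $|a_N\phi_N(-r;1)|$. This is exactly the bookkeeping already carried out in the proof of Theorem~\ref{qexceptional}, so no new idea is needed; everything after it is the routine convexity computation above.
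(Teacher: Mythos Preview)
Your proof is correct and follows essentially the same route as the paper's: both reduce the lemma to the two comparison-sequence bounds $\frac{\alpha_{N\pm k}}{\alpha_N}\rho_N^{\pm k}\le e^{-\frac12 k^2 b(\cdot)}$, obtained from $\int(\alpha(t)-\alpha(N))\,dt$ and the monotonicity of $|\alpha'(t)|=t^{\delta-1}$, and then invoke Definition~\ref{qnormal}. You are more explicit than the paper about identifying the $q$-normality witness with $\nu_q(r;f)$ and about the bound $\varepsilon_{N-k,N}\le\frac{2q^{1-N}}{(1-q)e^{N^\gamma}}$, and you estimate the integral directly rather than via the paper's integration-by-parts rewriting $\int_N^{N+k}(N+k-t)\alpha'(t)\,dt$, but these are cosmetic differences only.
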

\begin{proof}
From the definition of the comparison sequences $\{\alpha_n\}_{n\in\mathbb{N}_0}$ and $\{\rho_n\}_{n\in\mathbb{N}_0}$, we have
\begin{align*}
	\frac{\alpha_{N+k}}{\alpha_N}\rho_N^k &= e^{\int_N^{N+k}{(\alpha(t)-\alpha(N))\,dt}} = e^{\int_N^{N+k}{(N+k-t)\alpha'(t)\,dt}} \\
	&\le e^{-\frac{1}{2}k^2\min\{|\alpha'(t)|:t\in[N,N+k]\}} = e^{-\frac{1}{2}k^2 b(N+k)}
\end{align*}
for every $k\in\mathbb{N}$, and
\begin{align*}
	\frac{\alpha_{N-k}}{\alpha_N}\rho_N^{-k} &= e^{-\int_{N-k}^N{(\alpha(t)-\alpha(N))\,dt}} = e^{-\int_{N-k}^N{(N-k-t)\alpha'(t)\,dt}} \\
	&\le e^{-\frac{1}{2}k^2\min\{|\alpha'(t)|:t\in[N-k,N]\}} = e^{-\frac{1}{2}k^2 b(N)}
\end{align*}
for every $k\in\{0,1,\ldots,N-1\}$.\ \ So the result follows from Definition~\ref{qnormal}.
\end{proof}

We are now ready to prove Theorem~\ref{muorderequal} and the main Theorem~\ref{tail}.\ \ Note that the following proof of Theorem~\ref{muorderequal} depends only on Lemma~\ref{WVestimate} but not Theorem~\ref{qexceptional}, while in the proof of Theorem~\ref{qexceptional} we have already made use of Theorem~\ref{AWLP} which follows from Theorem~\ref{muorderequal}.

\subsection{Proof of Theorem~\ref{muorderequal}}

The second equality has already been proved in Lemma~\ref{muorder} (i), so it remains to establish the first equality that $\sigma_{\log}(\mu_q(\cdot;f))=\sigma_{\log}$.\ \ Given a non-constant entire function $f$ of log-order $\sigma_{\log}<2$, we let $\gamma\in(1,\frac{1}{\sigma_{\log}-1})$ and let $E$ be the $q$-exceptional set for $f$.\ \ Then for every $\varepsilon>0$, one can deduce that
\begin{align}
\label{Mbound}
	\mu_q(r;f) \le K(r)M(r;f) \le \mu_q(r;f) [\ln\mu_q(r;f)]^{\frac{1-\delta}{2}+\varepsilon}
\end{align}
for every sufficiently large $r\in(0,\infty)\setminus E$, where $K(r):=K_{\nu_q(r;f)}$ is the $\nu_q(r;f)$-th term of the sequence $\{K_n\}$ as defined in the proof of Lemma~\ref{gg}, so that $K(r)$ decreases to $1$ as $r\to\infty$.\ \ The first inequality in \eqref{Mbound} follows from Lemma~\ref{muorder} (ii) and Lemma~\ref{gg}, while the second inequality follows from Lemma~\ref{WVestimate} and similar arguments as in \cite[Theorem 6]{Hayman2} (see also \cite[pp. 330--334]{Hayman2}).\ \ These two inequalities together show that $\sigma_{\log}(\mu_q(\cdot;f))=\sigma_{\log}$.
\hfill \qed

\subsection{Proof of Theorem~\ref{tail}}

The proof is similar to the one of \cite[Theorem 3.3]{IY}.\ \ We take $E$ to be the $q$-exceptional set for $f$.\ \ Then we let $\eta\in(0,\frac12]$ be a number to be determined later, and divide the sum into four parts
\begin{align*}
	&\ \ \ \,\sum_{k:|k-N|\ge \kappa}{q^{-hk}[k]_q^h|a_k\phi_k(-r;1)|} \\
	&= \left(\sum_{k:k\le(1-\eta)N}+\sum_{k:(1-\eta)N<k\le N-\kappa}+\sum_{k:N+\kappa\le k<(1+\eta)N}+\sum_{k:k\ge(1+\eta)N}\right){q^{-hk}[k]_q^h|a_k\phi_k(-r;1)|}.
\end{align*}
\begin{enumerate}[(i)]
	\item For $k\ge(1+\eta)N$ and $r\notin E$, let $p:=k-N$.\ \ Lemma~\ref{WVestimate} gives
		\[
			\frac{q^{-hk}[k]_q^h|a_k\phi_k(-r;1)|}{|a_N \phi_N(-r;1)|} \le e^{-\frac{1}{2}p^2 b(N+p)+(N+p)\ln q^{-h}+h\ln [N+p]_q}.
		\]
		Since $\displaystyle\lim_{r\to\infty}N=+\infty$ by Lemma~\ref{munuprop} and since $p\ge\eta N$, we have
		\begin{align*}
			&\ \ \ \,-\frac{1}{2}p^2 b(N+p)+(N+p)\ln q^{-h}+h\ln [N+p]_q \\
			&\le -\frac{1}{2}\frac{\eta}{1+\eta}p(N+p)^\delta + p\left(\frac{1}{\eta}+1\right)\ln q^{-h} + h\ln\frac{1}{1-q} \\
			&\le -p
		\end{align*}
		for every sufficiently large $r$.\ \ Therefore
		\begin{align*}
			&\ \ \ \,\frac{1}{|a_N \phi_N(-r;1)|}\sum_{k:k\ge(1+\eta)N}{q^{-hk}[k]_q^h|a_k\phi_k(-r;1)|} \\
			&\le \sum_{p:p\ge\eta N}{e^{-p}} \le \int_{\eta N-1}^\infty{e^{-t}\,dt} = e^{1-\eta N}
		\end{align*}
		for every sufficiently large $r\in(0,\infty)\setminus E$.
	\item For $k\le(1-\eta)N$ and $r\notin E$, let $p:=N-k$.\ \ Lemma~\ref{WVestimate} gives
		\[
			\frac{q^{-hk}[k]_q^h|a_k\phi_k(-r;1)|}{|a_N \phi_N(-r;1)|} \le \left[1+\frac{2q^{1-N}}{(1-q)e^{N^\gamma}}\right]e^{-\frac{1}{2}p^2 b(N)+(N-p)\ln q^{-h}+h\ln[N-p]_q}.
		\]
		Since $\displaystyle\lim_{r\to\infty}N=+\infty$ and since $p\ge\eta N$, we have
		\begin{align*}
			&\ \ \ \,-\frac{1}{2}p^2 b(N)+(N-p)\ln q^{-h}+h\ln[N-p]_q \\
			&\le -\frac{1}{2}\eta pN^\delta + p\left(\frac{1}{\eta}-1\right)\ln q^{-h} + h\ln\frac{1}{1-q} \\
			&\le -p
		\end{align*}
		for every sufficiently large $r$.\ \ Therefore similar to the last paragraph we also have
		\[
			\frac{1}{|a_N \phi_N(-r;1)|}\sum_{k:k\le(1-\eta)N}{q^{-hk}[k]_q^h|a_k\phi_k(-r;1)|} \le 2e^{1-\eta N}
		\]
		for every sufficiently large $r\in(0,\infty)\setminus E$.
	\item In the remaining case, we let $\varepsilon\in(0,\frac{2q^{1-N}}{(1-q)e^{N^\gamma}})$ be arbitrary.\ \ Then by the equicontinuity of the family of exponential functions $\{x\mapsto t^x:t\in(0,\frac12]\}$ at $x=1$ and by the continuity of the function $b$, the number $\eta\in(0,\frac{1}{2}]$ can be chosen small enough so that
		\[
			q^{-h\eta}<1+\varepsilon
		\]
		and
		\[
			\left(\frac{1-t^{1+\eta}}{1-t}\right)^h<1+\varepsilon  \hspace{20px} \mbox{for every $t\in(0,\frac12]$}
		\]
		and
		\[
			\frac{b(N+|p|)}{b(N)}>1-\varepsilon \hspace{20px} \mbox{for every $p\in(-\eta N,\eta N)$}.
		\]
		Now for $k\in((1-\eta)N, (1+\eta)N)$ and $r\notin E$, let $p:=k-N$.\ \ Both estimates in Lemma~\ref{WVestimate} give
		\begin{align*}
			&\ \ \ \,\frac{q^{-hk}[k]_q^h|a_k\phi_k(-r;1)|}{|a_N \phi_N(-r;1)|} \\
			&\le q^{-h(1+\frac{p}{N})N}[N+p]_q^h\left[1+\frac{2q^{1-N}}{(1-q)e^{N^\gamma}}\right]e^{-\frac{1}{2}p^2b(N+|p|)} \\
			&\le q^{-hN}(1+\varepsilon)^N[N]_q^h(1+\varepsilon)\left[1+\frac{2q^{1-N}}{(1-q)e^{N^\gamma}}\right]e^{-\frac{1}{2}p^2(1-\varepsilon)b(N)} \\
			&\le \left[1+\frac{2q^{1-N}}{(1-q)e^{N^\gamma}}\right]^{N+2} q^{-hN}[N]_q^h e^{-b^*p^2}
		\end{align*}
		for every $N$ large enough so that $q^{N-1}\in(0,\frac12]$, where $b^*:=\frac{1}{2}(1-\varepsilon)b(N)$.
\end{enumerate}
Now combining the above three paragraphs, we see that for every $\varepsilon\in(0,\frac{2q^{1-N}}{(1-q)e^{N^\gamma}})$ and every $\kappa\in\mathbb{N}$, there exists $\eta\in(0,\frac{1}{2}]$ such that
\begin{align*}
	&\ \ \ \,\sum_{k:|k-N|\ge \kappa}{q^{-hk}[k]_q^h|a_k\phi_k(-r;1)|} \\
	&\le \left[1+\frac{2q^{1-N}}{(1-q)e^{N^\gamma}}\right]^{N+2} q^{-hN}[N]_q^h \mu_q(r;f)\left(2\sum_{p=\kappa}^\infty{e^{-b^*p^2}}+3e^{1-\eta N}\right)
\end{align*}
for every sufficiently large $r\in(0,\infty)\setminus E$.\ \ Note that
\[
	\sum_{p=\kappa}^\infty{e^{-b^*p^2}} \le \int_{\kappa-1}^\infty{e^{-b^*t^2}\,dt} = \frac{1}{\sqrt{b^*}}\int_{y_0}^\infty{e^{-y^2}\,dy} = \frac{1}{\sqrt{b^*}}\left(\frac{e^{-y_0^2}}{2y_0}-\int_{y_0}^\infty{\frac{e^{-y^2}}{2y^2}\,dy}\right)
\]
where $y_0:=(\kappa-1)\sqrt{b^*}$.\ \ So given any $\beta>0$, if we take $\kappa=\left[\sqrt{\frac{\beta}{b(N)}\ln\frac{1}{b(N)}}\right]$, then for each $\omega\in(0,\beta)$, the number $\varepsilon$ can be chosen so small that
\[
	\sum_{p=\kappa}^\infty{e^{-b^*p^2}} = O\left(\frac{e^{-y_0^2}}{y_0\sqrt{b^*}}\right) = O\left(\frac{e^{\frac{1}{2}(1-\varepsilon)\beta\ln b(N)}}{\sqrt{b(N)\ln\frac{1}{b(N)}}}\right) = o(b(N)^{\frac{\omega-1}{2}})
\]
as $r\to\infty$ and $r\in(0,\infty)\setminus E$.
\hfill \qed

\bigskip
\begin{lemma}
\label{polyn}
Let $p$ be a polynomial of degree $d$.\ \ Then as $r\to\infty$, we have
\begin{align}
	\label{W14} \max\{|p(\hat{x})|, |p(\check{x})|\} \le \frac{q^{-\frac{d}{2}}(r+\sqrt{r^2+1})^d}{2^d r^d}\left(1+O\left(\frac{1}{r}\right)\right)M(r;p)
\end{align}
and
\begin{align}
	\label{W15}|(\D_q p)(x)| \le \frac{5\ln q^{-1}}{1-q}\frac{edq^{-\frac{d-1}{2}}(r+\sqrt{r^2+1})^{d-1}}{2^{d-1}r^d}M(r;p)
\end{align}
for every $x\in\partial D(0;r)$.
\end{lemma}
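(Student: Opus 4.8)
The plan is to reduce both estimates to two elementary facts: (a) a bound on how far the Askey--Wilson shifts $\hat x,\check x$ can move a point of modulus $r$, and (b) the standard comparison of the maximum modulus of a polynomial on two concentric circles.

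For (a), write $x=\frac{z+z^{-1}}{2}$ with $z=x+\sqrt{x^2-1}$ and $|z|\ge 1$ (the chosen branch). When $|x|=r$ one has $|\sqrt{x^2-1}|=|x^2-1|^{1/2}\le\sqrt{r^2+1}$, hence $|z|\le r+\sqrt{r^2+1}$ and $|z|^{-1}\le 1$; feeding these into the definitions of $\hat x$ and $\check x$ shows that
\[
	\max\{|\hat x|,|\check x|\}\le\rho_0:=\tfrac12\,q^{-1/2}\bigl(r+\sqrt{r^2+1}+1\bigr),
\]
and note $\rho_0>r$ for every $r>0$. For (b), given a polynomial $p$ of degree $d$, the reciprocal polynomial $g(y):=y^dp(1/y)$ is again a polynomial of degree $\le d$, and $M(\varrho;p)=\varrho^d M(1/\varrho;g)$; since $M(\cdot;g)$ is non-decreasing by the maximum principle, this yields $M(\varrho;p)\le(\varrho/r)^dM(r;p)$ for all $\varrho\ge r$.

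Granting (a) and (b), \eqref{W14} is immediate: $\max\{|p(\hat x)|,|p(\check x)|\}\le M(\rho_0;p)\le(\rho_0/r)^dM(r;p)$, and since $d$ is fixed, $(\rho_0/r)^d=q^{-d/2}\dfrac{(r+\sqrt{r^2+1})^d}{2^dr^d}\bigl(1+\tfrac{1}{r+\sqrt{r^2+1}}\bigr)^d=q^{-d/2}\dfrac{(r+\sqrt{r^2+1})^d}{2^dr^d}(1+O(1/r))$. For \eqref{W15} I would first note that $x\ne\pm 1$ for large $r$, so $\hat x\ne\check x$, and integrate the derivative along the segment from $\check x$ to $\hat x$ to get $(\D_q p)(x)=\dfrac{p(\hat x)-p(\check x)}{\hat x-\check x}=\int_0^1 p'\bigl((1-t)\check x+t\hat x\bigr)\,dt$; since that segment lies in $\overline{D(0;\rho_0)}$, this gives $|(\D_q p)(x)|\le M(\rho_0;p')$. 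Now $p'$ has degree $d-1$, so Bernstein's inequality $M(\rho_0;p')\le\frac{d}{\rho_0}M(\rho_0;p)$ together with (b) gives $|(\D_q p)(x)|\le\frac{d}{\rho_0}(\rho_0/r)^dM(r;p)=\frac{d\,\rho_0^{d-1}}{r^d}M(r;p)$, and expanding $\rho_0^{d-1}=q^{-(d-1)/2}\frac{(r+\sqrt{r^2+1})^{d-1}}{2^{d-1}}(1+O(1/r))$ turns this into $\frac{d\,q^{-(d-1)/2}(r+\sqrt{r^2+1})^{d-1}}{2^{d-1}r^d}(1+O(1/r))\,M(r;p)$. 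Finally, since $-\ln q\ge 1-q$ for $q\in(0,1)$ we have $\frac{5e\ln q^{-1}}{1-q}\ge 5e>1+O(1/r)$ for all large $r$, so this last bound is dominated by the right-hand side of \eqref{W15} (understood to hold for all sufficiently large $r$, uniformly for $x\in\partial D(0;r)$).

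The computations are routine; the only point that needs real care is keeping the dependence on the degree $d$ \emph{linear} in \eqref{W15}. The naive bound $|(\D_q p)(x)|\le(|p(\hat x)|+|p(\check x)|)/|\hat x-\check x|$, using $|\hat x-\check x|\ge q^{-1/2}(1-q)\sqrt{r^2-1}$, produces a bound with $(r+\sqrt{r^2+1})^d/\sqrt{r^2-1}$ in place of the geometric factor $(r+\sqrt{r^2+1})^{d-1}$ of \eqref{W15}; the discrepancy is of order $(r+\sqrt{r^2+1})/\sqrt{r^2-1}\to 2$, which the finite constant in \eqref{W15} cannot absorb once $\ln q^{-1}$ is small (i.e.\ $q$ near $1$). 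Passing through $p'$---via Bernstein's inequality, or equivalently a Cauchy estimate on a circle of radius $\rho_0 d/(d-1)$, which directly yields the constant $ed$---is precisely what keeps that dependence linear and supplies the room needed for \eqref{W15}.
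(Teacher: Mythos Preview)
Your argument is correct. For \eqref{W14} it coincides with the paper's proof: both rest on the bound $\max\{|\hat x|,|\check x|\}\le\rho_0$ together with the standard fact $M(\varrho;p)\le(\varrho/r)^dM(r;p)$ for $\varrho\ge r$ (you phrase this via the reciprocal polynomial, the paper via the maximum principle for $p(x)/x^d$ on $\hat{\mathbb C}\setminus D(0;r)$). For \eqref{W15} your route is genuinely different in its technical choices. The paper writes the divided difference as an integral of $p'$ along the curved $q$-path $t\mapsto\frac{q^tz+q^{-t}z^{-1}}{2}$, $t\in[-\tfrac12,\tfrac12]$, and then has to bound the arc-length-to-chord ratio $\bigl|\frac{(\ln q)(q^tz-q^{-t}z^{-1})}{(q^{1/2}-q^{-1/2})(z-z^{-1})}\bigr|$, which is where the factor $\frac{5\ln q^{-1}}{1-q}$ comes from; it then invokes the Cauchy-type estimate from Hayman's Lemma~7 to bound $|p'|$. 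You instead integrate along the straight segment (whose arc-length-to-chord ratio is $1$) and invoke Bernstein's inequality $M(\rho_0;p')\le\frac{d}{\rho_0}M(\rho_0;p)$. This is slicker: it avoids the path computation entirely and directly produces $\frac{d\,\rho_0^{\,d-1}}{r^d}M(r;p)$, which already carries the factor $2^{d-1}$ in the denominator and is in fact sharper than the stated \eqref{W15} by the constant $\frac{5e\ln q^{-1}}{1-q}$. Your closing remark correctly identifies why the naive numerator-plus-denominator bound is insufficient when $q$ is close to $1$; the key point, as you say, is that passing through $p'$ trades one power of $r+\sqrt{r^2+1}$ for the harmless linear factor $d$.
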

\begin{proof}
Applying maximum principle to $\frac{p(x)}{x^d}$ on $\hat{\mathbb{C}}\setminus D(0;r)$, we have
\[
	|p(y)| \le \frac{|y|^d}{r^d}M(r;p)
\]
whenever $|y|>r$.\ \ Now for every $x\in\partial D(0;r)$, we have
\[
	r+\sqrt{r^2-1}\le\max\{|z|,|z^{-1}|\}\le r+\sqrt{r^2+1},
\]
so for sufficiently large $r$,
\[
	\max\{|\hat{x}|,|\check{x}|\} \le \frac{\frac{q^{\frac12}}{r+\sqrt{r^2+1}}+q^{-\frac12}(r+\sqrt{r^2+1})}{2} = \frac{q^{-\frac12}(r+\sqrt{r^2+1})}{2} + O\left(\frac{1}{r}\right)
\]
as $r\to\infty$.
\begin{enumerate}[(i)]
	\item Using this estimate if $|\hat{x}|\ge r$ or using the maximum principle if $|\hat{x}|< r$, we obtain
	\[
		|p(\hat{x})| = \frac{q^{-\frac{d}{2}}(r+\sqrt{r^2+1})^d}{2^d r^d}\left(1+O\left(\frac{1}{r}\right)\right)M(r;p)
	\]
	as $r\to\infty$.\ \ A similar equality also holds for $|p(\check{x})|$, so \eqref{W14} follows.
	\item Applying Cauchy's inequality we have
	\[
		|p'(y)|\le \frac{edq^{-\frac{d-1}{2}}(r+\sqrt{r^2+1})^{d-1}}{r^d}M(r;p)
	\]
	for every $y\in\overline{D(0;q^{-\frac{1}{2}}(r+\sqrt{r^2+1}))}$ \cite[Lemma 7, p. 337]{Hayman2}.\ \ Hence for every $x\in\partial D(0;r)$,
	\begin{align*}
		&\ \ \ \,|(\D_q p)(x)| = \left|\frac{p(\hat{x})-p(\check{x})}{\hat{x}-\check{x}}\right| \\
		&\le \int_{-\frac{1}{2}}^{\frac{1}{2}}{\left|p'\left(\frac{q^t  z+q^{-t}z^{-1}}{2}\right)\frac{(\ln q)\frac{q^t z-q^{-t}z^{-1}}{2}}{(q^{\frac12}-q^{-\frac12})\frac{z-z^{-1}}{2}}\right|\,dt} \\
		&\le \frac{\ln q^{-1}}{q^{-\frac12}-q^{\frac12}}\left|\frac{q^{-\frac12}(r+\sqrt{r^2+1}) - \frac{q^{\frac12}}{r+\sqrt{r^2-1}}}{(r+\sqrt{r^2-1})-\frac{1}{r+\sqrt{r^2+1}}}\right|\frac{edq^{-\frac{d-1}{2}}(r+\sqrt{r^2+1})^{d-1}}{r^d}M(r;p) \\
		&\le \frac{5\ln q^{-1}}{1-q}\frac{edq^{-\frac{d-1}{2}}(r+\sqrt{r^2+1})^{d-1}}{r^d}M(r;p)
	\end{align*}
	which is \eqref{W15}.
\end{enumerate}
\end{proof}

With Theorem~\ref{tail} and Lemma~\ref{polyn} established, we can now prove Theorem~\ref{WVmain}.

\subsection{Proof of Theorem~\ref{WVmain}}

At each $x\in\mathbb{C}$, we let $b(N):=N^{-1+\delta}$ and $\kappa:=\left[\sqrt{\frac{10}{b(N)}\ln\frac{1}{b(N)}}\right]$, and let
\[
	\psi(x):=\sum_{k:|k-N|>\kappa}{a_k\phi_k(x;1)} \hspace{20px} \mbox{and} \hspace{20px} p(x):=\sum_{k=N-\kappa}^{N+\kappa}{a_k\frac{\phi_k(x;1)}{\phi_{N-\kappa}(x;1)}}.
\]
Then locally $p$ is a polynomial of degree at most $2\kappa$ and
\[
	f(x)=\psi(x) + \phi_{N-\kappa}(x;1)p(x).
\]
We take $E$ to be the $q$-exceptional set for $f$.\ \ To find the asymptotics of $(\D_q^n f)(x)$ outside $E$, we compute the asymptotics of $(\D_q^n\psi)(x)$ and $\D_q^n(\phi_{N-\kappa}(x;1)p(x))$ outside $E$ respectively.
\begin{enumerate}[(i)]
\item Applying Theorem~\ref{tail} with $h=n$, $\beta=10$ and $\omega=9$, we have
\begin{align*}
	&\ \ \ \,r^n|(\D_q^n\psi)(x)| \\
	&= \left|\sum_{k:|k-N|>\kappa}{a_k(-2)^n [k]_q[k-1]_q\cdots[k-n+1]_q r^n\phi_{k-n}(x;{\hat{1}}^{(n)})}\right| \\
	&\le \sum_{k:|k-N|\ge\kappa}{q^{\frac{(k-n)(k-n-1)}{2}-\frac{k(k-1)}{2}}[k]_q^n|a_k\phi_k(x;1)|} \\
	&= \sum_{k:|k-N|\ge\kappa}{q^{\frac{n(n+1)}{2}-nk}[k]_q^n|a_k\phi_k(x;1)|} \\
	&= o(\mu_q(r;f)q^{\frac{n(n+1)}{2}-nN}[N]_q^n b(N)^4) \\
	&= o(\mu_q(r;f)q^{\frac{n(n+1)}{2}-nN}[N]_q^n N^{-4(1-\delta)})
\end{align*}
as $r\to\infty$ and $r\in(0,\infty)\setminus E$.\ \ This gives the asymptotics of $(\D_q^n\psi)(x)$ outside $E$.\ \ In particular, we have
\begin{align}
	\label{W16}|\psi(x)| = o(\mu_q(r;f)N^{-4(1-\delta)}) = o(N^{-4(1-\delta)})M(r;f)
\end{align}
as $r\to\infty$ and $r\in(0,\infty)\setminus E$.

\item To get the asymptotics of $\D_q^n(\phi_{N-\kappa}(x;1)p(x))$ outside $E$, we need some more preparation.\ \ Since $\sum_{k=0}^{N-\kappa-1}\frac{q^k+q^{-k}}{r}<\frac{2q^{1-N}}{e^{N^\gamma}}[N]_{q}<1$ for each sufficiently large $N$ (the first inequality comes from Lemma~\ref{muorder} (ii)), we have
\begin{align*}
	|\phi_{N-\kappa}(x;1)| &\ge (r-1)\left(r-\frac{q+q^{-1}}{2}\right)\cdots\left(r-\frac{q^{N-\kappa-1}+q^{1+\kappa-N}}{2}\right) \\
	&= r^{N-\kappa}\left(1-\frac{1}{r}\right)\left(1-\frac{q+q^{-1}}{2r}\right)\cdots\left(1-\frac{q^{N-\kappa-1}+q^{1+\kappa-N}}{2r}\right) \\
	&\ge r^{N-\kappa}\left(1-\sum_{k=0}^{N-\kappa-1}\frac{q^k+q^{-k}}{r}\right) \\
	&\ge r^{N-\kappa}\left(1-\frac{2q^{1-N}}{e^{N^\gamma}}[N]_{q}\right) \\
	&= r^{N-\kappa}(1-\varepsilon)
\end{align*}
where $\varepsilon\to 0$ as $r\to\infty$.\ \ This together with \eqref{W16} give
\[
	|p(x)|=\frac{1}{|\phi_{N-\kappa}(x;1)|}|f(x)-\psi(x)| \le r^{\kappa-N}(1+\varepsilon')M(r;f),
\]
where $\varepsilon'\to 0$ as $r\to\infty$ and $r\in(0,\infty)\setminus E$.\ \ Setting $M_0:=r^{\kappa-N}(1+\varepsilon')M(r;f)$ and applying \eqref{W15} in Lemma~\ref{polyn}, we have
\[
	|(\D_q p)(x)| \le \frac{5\ln q^{-1}}{1-q}\frac{e(2\kappa)q^{-\frac{2\kappa-1}{2}}(r+\sqrt{r^2+1})^{2\kappa-1}}{r^{2\kappa}}M_0 = O\left(\frac{\kappa(4q^{-1})^\kappa}{r}\right)M_0
\]
as $r\to\infty$ and $r\in(0,\infty)\setminus E$, and inductively for each $k\in\{0,1,\ldots,n\}$ we have
\begin{align}
\label{Dp}
	|(\D_q^k p)(x)| = O\left(\frac{\kappa^k(4q^{-1})^{\kappa k}}{r^k}\right)M_0
\end{align}
as $r\to\infty$ and $r\in(0,\infty)\setminus E$.\ \ We also note that for each $k\in\{0,1,\ldots,n\}$,
\begin{align}
	\label{DPhi}
	\D_q^{n-k}\phi_{N-\kappa}(x;1) &= (-2)^{n-k}\frac{[N-\kappa]_q^{!}}{[N-\kappa-n+k]_q^{!}}\phi_{N-\kappa-n+k}(x;{\hat{1}}^{(n-k)}) \\
	\nonumber &= (-2)^n\frac{[N-\kappa]_q^{!}}{[N-\kappa-n]_q^{!}}\phi_{N-\kappa-n}(x;{\hat{1}}^{(n)}) \\
	\nonumber &\ \ \ \,\cdot\frac{[N-\kappa-n]_q^{!}}{[N-\kappa-n+k]_q^{!}}(-2)^{-k}\frac{\phi_{N-\kappa-n+k}(x;{\hat{1}}^{(n-k)})}{\phi_{N-\kappa-n}(x;{\hat{1}}^{(n)})} \\
	\nonumber &= (-2)^n\frac{[N-\kappa]_q^{!}}{[N-\kappa-n]_q^{!}}\phi_{N-\kappa-n}(x;{\hat{1}}^{(n)})O\left(\frac{[r+q^{-(N-\kappa)}]^k}{q^{-(N-\kappa)k}}\right) \\
	\nonumber &= (-2)^n\frac{[N-\kappa]_q^{!}}{[N-\kappa-n]_q^{!}}\phi_{N-\kappa-n}(x;{\hat{1}}^{(n)})O\left(\frac{r^k}{q^{-(N-\kappa)k}}\right)
\end{align}
as $r\to\infty$, where the last step followed from Lemma~\ref{muorder} (ii).

Now we are ready to compute the asymptotics of $\D_q^n(\phi_{N-\kappa}(x;1)p(x))$ outside $E$.\ \ Applying the Leibniz rule (Theorem~\ref{AWLeibniz}), and then applying \eqref{W14} in Lemma~\ref{polyn} as well as the estimates \eqref{Dp} and \eqref{DPhi} to the relevant polynomials, we obtain
\begin{align*}
	&\ \ \ \,\D_q^n(\phi_{N-\kappa}(x;1)p(x)) \\
	&= \sum_{k=0}^n{\left[\begin{matrix}n\\k\end{matrix}\right]_q q^{-\frac{k(n-k)}{2}}(\eta_q^k\D_q^{n-k}\phi_{N-\kappa})(x;1)(\eta_q^{-(n-k)}\D_q^k p)(x)} \\
	&= \eta_q^{-n} p(x)\D_q^n\phi_{N-\kappa}(x;1) + \sum_{k=1}^n{\left[\begin{matrix}n\\k\end{matrix}\right]_q q^{-\frac{k(n-k)}{2}}(\eta_q^k\D_q^{n-k}\phi_{N-\kappa})(x;1)(\eta_q^{-(n-k)}\D_q^k p)(x)} \\
	&= (-2)^n\frac{[N-\kappa]_q^{!}}{[N-\kappa-n]_q^{!}}\phi_{N-\kappa-n}(x;{\hat{1}}^{(n)}) \\
	&\qquad\cdot\left[q^{-\frac{n}{2}(2\kappa)}p(x)+\sum_{k=1}^n{q^{-\frac{k}{2}(N-\kappa-n+k)}O\left(\frac{r^k}{q^{-(N-\kappa)k}}\right)q^{-\frac{n-k}{2}(2\kappa-k)}O\left(\frac{\kappa^k(4q^{-1})^{\kappa k}}{r^k}\right)M_0}\right] \\
	&= (-2)^n\frac{[N-\kappa]_q^{!}}{[N-\kappa-n]_q^{!}}\phi_{N-\kappa-n}(x;{\hat{1}}^{(n)})q^{-n\kappa}\left(p(x)+\sum_{k=1}^n{O\left(\frac{\kappa^k 4^{\kappa k}}{q^{-\frac{k}{2}(N-\kappa)}}\right)M_0}\right)
\end{align*}
as $r\to\infty$ and $r\in(0,\infty)\setminus E$.\ \ The definition of $\kappa$ implies that the $k=1$ term in the last sum dominates all other terms, so we get
\begin{align*}
	&\ \ \ \,\D_q^n(\phi_{N-\kappa}(x;1)p(x)) \\
	&= (-2)^n\frac{[N-\kappa]_q^{!}}{[N-\kappa-n]_q^{!}}\phi_{N-\kappa-n}(x;{\hat{1}}^{(n)})q^{-n\kappa}\left(p(x)+O\left(\frac{\kappa 4^\kappa}{q^{-\frac{1}{2}(N-\kappa)}}\right)M_0\right)
\end{align*}
as $r\to\infty$ and $r\in(0,\infty)\setminus E$ (note that among the symbols $N$, $\kappa$, $n$ and $k$ appearing in the exponent of $q$, only the first two depend on $r$).\ \ Finally, observing again that $p=f-\psi$ and applying \eqref{W16} to $\psi$, we get
\begin{align*}
	&\ \ \ \,\D_q^n(\phi_{N-\kappa}(x;1)p(x)) \\
	&= (-2)^n\frac{[N-\kappa]_q^{!}}{[N-\kappa-n]_q^{!}}\frac{\phi_{N-\kappa-n}(x;{\hat{1}}^{(n)})}{\phi_{N-\kappa}(x;1)}q^{-n\kappa} \cdot \\
	&\qquad \cdot \left(f(x)-\psi(x)+\phi_{N-\kappa}(x;1)O\left(\frac{\kappa 4^\kappa}{q^{-\frac{1}{2}(N-\kappa)}}\right)M_0\right) \\
	&=(-2)^n\frac{[N-\kappa]_q^{!}}{[N-\kappa-n]_q^{!}}\frac{\phi_{N-\kappa-n}(x;{\hat{1}}^{(n)})}{\phi_{N-\kappa}(x;1)}q^{-n\kappa}\left(f(x)+o(N^{-4(1-\delta)})M(r;f)\right) \\
	&= \frac{[N-\kappa]_q^{!}}{[N-\kappa-n]_q^{!}}\frac{1}{x^n}q^{\frac{n(n+1)}{2}-nN}\left(f(x)+o(N^{-4(1-\delta)})M(r;f)\right)
\end{align*}%
as $r\to\infty$ and $r\in(0,\infty)\setminus E$.\ \ This gives the asymptotics of $\D_q^n(\phi_{N-\kappa}(x;1)p(x))$ outside $E$.
\end{enumerate}
The two asymptotics we obtained in the above two paragraphs imply that
\begin{align*}
	&\ \ \ \,q^{nN-\frac{n(n+1)}{2}}\left(\frac{x}{[N]_q}\right)^n(\D_q^n f)(x) \\
	&= q^{nN-\frac{n(n+1)}{2}}\left(\frac{x}{[N]_q}\right)^n(\D_q^n\psi)(x) + q^{nN-\frac{n(n+1)}{2}}\left(\frac{x}{[N]_q}\right)^n\D_q^n(\phi_{N-\kappa}(x;1)p(x)) \\
	&= f(x)+o(N^{-4(1-\delta)})M(r;f)
\end{align*}
as $r\to\infty$ and $r\in(0,\infty)\setminus E$, as desired.
\hfill \qed

\section{Applications}
\label{sec:Applications}
Our Askey-Wilson version of the Wiman-Valiron theory can be applied when studying difference equations involving the Askey-Wilson operator.\ \ Before we proceed, we have the following remark about Theorem~\ref{WVmain}.

\begin{remark}
\label{WVmainrmk}
We assume the same set-up as in Theorem~\ref{WVmain}.\ \ Then for those $x\in\mathbb{C}$ such that $|f(x)|\ge N^{-4(1-\delta)}M(|x|;f)$, we have
\begin{equation}
\label{hh}
	q^{nN-\frac{n(n+1)}{2}}\left(\frac{x}{[N]_q}\right)^n(\D_q^n f)(x) = (1+o(1))f(x)
\end{equation}
as $|x|\to\infty$ and $|x|\in(0,\infty)\setminus E$.
\end{remark}

Theorem~\ref{WVmain} (in the form of Remark~\ref{WVmainrmk}) can be used to obtain the following result about linear Askey-Wilson difference equations.\ \ The classical analogue of this result about linear differential equations can be found in \cite[\S 4.5]{Valiron3}, and an analogue for $q$-difference equations (using the maximal term and central index regarding \textit{power series}) can be found in \cite{BIY}.
\begin{theorem}
\label{WVDE}
	Let $a_0,\ldots,a_n$ be polynomials with $a_n\not\equiv 0$.\ \ Then every transcendental entire solution to the Askey-Wilson difference equation
	\begin{align}
		\label{WW} a_n\D_q^n y + \cdots + a_1\D_q y + a_0 y = 0,
	\end{align}
	has log-order at least $2$.
\end{theorem}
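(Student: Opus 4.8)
The plan is to argue by contradiction using the Wiman--Valiron estimate of Theorem~\ref{WVmain}, in the form of Remark~\ref{WVmainrmk}. Suppose, contrary to the assertion, that $f$ is a transcendental entire solution of \eqref{WW} with $\sigma_{\log}(f)<2$ (necessarily the order $n\ge 1$, since $n=0$ forces $y\equiv 0$). Fix $\gamma\in(1,\tfrac{1}{\sigma_{\log}(f)-1})$ and $\delta\in(0,1)$, and let $E\subset[1,\infty)$ be the zero-logarithmic-density set furnished by Theorem~\ref{WVmain}. For each large $r\notin E$ pick $x_r$ with $|x_r|=r$ and $|f(x_r)|=M(r;f)$; then $|f(x_r)|\ge N^{-4(1-\delta)}M(r;f)$ holds trivially, where $N:=\nu_q(r;f)$, so \eqref{hh} is available at $x_r$ for every order, giving
\[
	(\D_q^j f)(x_r)=\bigl(1+o(1)\bigr)\,q^{-jN+\frac{j(j+1)}{2}}\left(\frac{[N]_q}{x_r}\right)^{\!j}f(x_r)\qquad(j=0,1,\dots,n)
\]
as $r\to\infty$ through $(0,\infty)\setminus E$, the case $j=0$ being trivial.

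Substituting these into \eqref{WW} at $x=x_r$, cancelling the nonzero factor $f(x_r)$ (nonzero since $M(r;f)\to\infty$), and multiplying by $x_r^{\,n}$ yields
\[
	\sum_{j=0}^{n}a_j(x_r)\bigl(1+o(1)\bigr)\,q^{-jN+\frac{j(j+1)}{2}}[N]_q^{\,j}\,x_r^{\,n-j}=0 .
\]
I would then estimate each summand. Write $d_j:=\deg a_j$ and let $c_j$ be the leading coefficient of $a_j$; for those $j$ with $a_j\not\equiv 0$ one has $|a_j(x)|\sim|c_j|r^{d_j}$ uniformly on $|x|=r$ as $r\to\infty$, while the remaining summands vanish. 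Since $[N]_q\in[1,(1-q)^{-1}]$ and $q^{j(j+1)/2}$ is a fixed constant, the $j$-th summand has modulus comparable to $r^{\,d_j+n-j}q^{-jN}$. The essential input is now Lemma~\ref{muorder}(ii): it gives $N=\nu_q(r;f)\le(\ln r)^{1/\gamma}$ for all large $r$, whence $q^{-jN}=r^{o(1)}$ for each fixed $j\le n$. Consequently, taking logarithms, the modulus of the $j$-th summand has logarithm $(d_j+n-j)\ln r+jN\ln q^{-1}+O(1)$, in which the first term dominates and, among indices sharing the same value of $d_j-j$, the subordinate quantity $jN\ln q^{-1}$ (with $N\to\infty$) selects the largest index.

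Let $\ell$ be the largest index $j$ with $a_j\not\equiv 0$ at which $d_j-j$ attains its maximum. The estimates above show that each summand with $j\ne\ell$ is $o$ of the $\ell$-th summand as $r\to\infty$ through $(0,\infty)\setminus E$, while the $\ell$-th summand, being comparable to $r^{\,d_\ell+n-\ell}q^{-\ell N}$, tends to $\infty$; absorbing the $(1+o(1))$ factors, the left-hand side of the displayed identity therefore has modulus at least $\tfrac12$ that of the $\ell$-th summand for all large $r\notin E$. In particular it is positive, contradicting that it vanishes. Since $E$ has zero logarithmic density, its complement in $[1,\infty)$ is unbounded, so the contradiction is genuinely attained along a sequence $r_k\to\infty$, and the theorem follows.

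I expect the main obstacle to be precisely the term-by-term comparison in the last two paragraphs: one must confirm that the factor $q^{-jN}$, though exponentially large in the Askey--Wilson central index $N$, remains subpolynomial in $r$, so that it cannot overturn the ordering of the summands dictated by the polynomial degrees $d_j$; this is exactly where the restriction $\sigma_{\log}(f)<2$ enters, via $\nu_q(r;f)^{\gamma}\le\ln r$ from Lemma~\ref{muorder}(ii). A minor point is the uniformity of the $o(1)$ errors from Wiman--Valiron over the finitely many orders $j=1,\dots,n$, which is immediate because Theorem~\ref{WVmain} supplies one exceptional set $E$ good for all $n$ simultaneously.
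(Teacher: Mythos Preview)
Your argument is correct, and the setup through the displayed identity
\[
\sum_{j=0}^{n}a_j(x_r)(1+o(1))\,q^{-jN+\frac{j(j+1)}{2}}[N]_q^{\,j}\,x_r^{\,n-j}=0
\]
is exactly what the paper does. The endgame, however, is genuinely different. The paper treats this relation as an approximate algebraic equation in the unknown $q^{-N}$ with coefficients that are powers of $x$: for it to be satisfied, two terms must balance, which forces $q^{-N}=L\,r^{\chi}(1+o(1))$ for some $L>0$ and some positive rational $\chi$ equal to a slope of the Newton polygon of \eqref{WW}. Taking logarithms gives $\nu_q(r;f)\sim\tfrac{\chi}{\ln q^{-1}}\ln r$, and then Lemma~\ref{muorder}(i) yields $\sigma_{\log}\ge 1+\limsup_{r\to\infty}\frac{\ln\nu_q(r;f)}{\ln\ln r}\ge 2$, the desired contradiction.

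You bypass the Newton polygon entirely by using Lemma~\ref{muorder}(ii) \emph{first}: the a priori bound $\nu_q(r;f)\le(\ln r)^{1/\gamma}$ gives $q^{-jN}=r^{o(1)}$, so the moduli of the summands are ordered lexicographically by $(d_j-j,\,j)$, and a single term dominates. This is more elementary and avoids the (standard but unstated) Newton polygon balancing argument. What it gives up is the quantitative conclusion $\nu_q(r;f)\sim c\ln r$ that the paper's route produces along the way; since the theorem only asks for $\sigma_{\log}\ge 2$, nothing is lost for the statement at hand.
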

\begin{proof}
Suppose on the contrary that $f$ is a transcendental entire solution to \eqref{WW} of log-order $\sigma_{\log}<2$.\ \ Then we let
\[
	S:=\{x\in\mathbb{C}:|f(x)|=M(|x|;f)\}.
\]
$S$ has non-empty intersection with $\partial D(0;r)$ for every $r>0$.\ \ Substituting $f$ into \eqref{WW} and applying Theorem~\ref{WVmain} (in the form of \eqref{hh}) to $f$, we have
\[
	\left(a_n q^{\frac{n(n+1)}{2}-nN}\frac{[N]_q^n}{x^n} + \cdots + a_1 q^{1-N}\frac{[N]_q}{x} + a_0\right)f(x)(1+o(1)) = 0
\]
uniformly on $S$ as $r=|x|\to\infty$ and $r\in(0,\infty)\setminus E$, where $N=\nu_q(r;f)$ and $E$ is the $q$-exceptional set for $f$.\ \ So denoting $c_k$ as the leading coefficient of the polynomial $a_k$ for each $k$, we have
\[
	\sum_{k=0}^n{c_k [N]_q^k q^{\frac{k(k+1)}{2}-kN} x^{(\deg a_k) - k}(1+o(1))}=0
\]
uniformly on $S$ as $r\to\infty$ and $r\in(0,\infty)\setminus E$.\ \ This implies that
\begin{align}
	\label{W40}q^{-N}=Lr^\chi(1+o(1))
\end{align}
as $r\to\infty$ and $r\in(0,\infty)\setminus E$ for some $L>0$ and some positive rational number $\chi$ which is the slope of some edge of the Newton polygon for \eqref{WW}, i.e. the convex hull of
	\[
		\bigcup_{k=0}^n\left\{(x,y)\in\mathbb{R}^2: \mbox{$x\ge k$ and $y\le(\deg a_{n-k})-(n-k)$}\right\}.
	\]
	Taking logarithms on both sides of \eqref{W40} we obtain
\[
	\nu_q(r;f) = \frac{\chi}{\ln q^{-1}}(\ln r)(1+o(1))
\]
as $r\to\infty$ and $r\in(0,\infty)\setminus E$.\ \ Now by Lemma~\ref{muorder} (i), we have
\[
	\sigma_{\log}\ge 1+\limsup_{r\to\infty}{\frac{\ln\nu_q(r;f)}{\ln\ln r}} \ge 1+1=2,
\]
which is a contradiction.
\end{proof}

\section{Discussion}
\label{sec:Discussion}

Although the main results in this paper hold for entire functions of log-order smaller than $2$, we believe that it is possible to obtain similar results for all entire functions satisfying \eqref{A7}.\ \ In this direction we consider the \textit{log-type} $\tau_{\log}$ of an entire function $f$, defined by
\[
	\tau_{\log}:=\limsup_{r\to\infty}{\frac{\ln^+ M(r;f)}{(\ln r)^2}},
\]
so that the condition \eqref{A7} is rephrased to that $\displaystyle\tau_{\log}<\frac{1}{2\ln q^{-1}}$.\ \ $\tau_{\log}$ carries useful information only when $\sigma_{\log}=2$, because one always has $\tau_{\log}=0$ if $\sigma_{\log}<2$ and $\tau_{\log}=+\infty$ if $\sigma_{\log}>2$.\ \ Parallel to Lemma~\ref{muorder}, we can obtain the following.

\begin{lemma}
\label{mutype}
Let $f$ be a non-constant entire function of log-type $\displaystyle\tau_{\log}<\frac{1}{2\ln q^{-1}}$.\ \ Then
\begin{enumerate}[(i)]
	\item $\displaystyle\tau_{\log}(\mu_q)\le\frac{1}{\frac{1}{\tau_{\log}}-2\ln q^{-1}}<\infty$.
	\item If $\displaystyle\tau_{\log}(\mu_q)<\frac{1}{4\ln q^{-1}}$, then $\displaystyle\limsup_{r\to\infty}\frac{\nu_q(r;f)}{\ln r}\in[\tau_{\log}(\mu_q), 4\tau_{\log}(\mu_q)]$.\ \ In particular, for every $\displaystyle\gamma<\frac{1}{(4\ln q^{-1})\tau_{\log}(\mu_q)}$, we have
	\[
		q^{-\gamma\nu_q(r;f)}\le r
	\]
	for every sufficiently large $r\in(0,+\infty)$.
\end{enumerate}
\end{lemma}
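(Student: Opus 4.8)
The plan is to mirror the four-step argument used to prove Lemma~\ref{muorder}, with two changes: the Lindel\"of--Pringsheim/Juneja--Kapoor--Bajpai formula for the log-order is replaced by its $(p,q)=(1,2)$-type analogue, and one now has to keep track of the $q$-powers which were of size $o(n^2)$, hence discarded, in the log-order setting but which are of the critical size $n^2$ when $\sigma_{\log}=2$. (The polynomial case of the lemma is trivial, since then $\mu_q(\cdot;f)$ and $\nu_q(\cdot;f)$ are eventually constant; so assume $f$ is transcendental.)

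For part~(i), recall from step~(2) of the proof of Lemma~\ref{muorder} the auxiliary function $f^*$, for which $\mu_q(r;f)\le M(r;f^*)$ for all $r>0$, so that $\tau_{\log}(\mu_q)\le\tau_{\log}(f^*)$ and it suffices to bound $\tau_{\log}(f^*)$. Writing $f(x)=\sum_k b_kx^k$ and invoking the type form of the formula of \cite{JKB}, namely $\frac{1}{4\tau_{\log}}=\liminf_{k\to\infty}\frac{\ln(1/|b_k|)}{k^2}$ (valid because $\sigma_{\log}=2$), one has $|b_k|\le e^{-(\frac{1}{4\tau_{\log}}-\varepsilon)k^2}$ for all large $k$. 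Substituting this into \eqref{W11} and using Proposition~\ref{C}(ii) gives $|a_n|\le Kq^{n(n+1)/2}\sum_{k\ge n}q^{-nk}|b_k|$; since \eqref{A7} means precisely $\frac{1}{4\tau_{\log}}>\frac12\ln q^{-1}$, the exponent $nk\ln q^{-1}-(\frac{1}{4\tau_{\log}}-\varepsilon)k^2$ is largest over $k\ge n$ at $k=n$, whence $|a_n|\le e^{-(\frac{1}{4\tau_{\log}}-\frac12\ln q^{-1}-\varepsilon')n^2}$ for all large $n$. Feeding this in turn into \eqref{W0} and applying Stirling's approximation (the $n!$, polynomial, and $q^{(k-n)(k-n-1)/2}$ factors away from $k=n$ all contribute only $e^{o(n^2)}$) yields $|b_n^*|\le e^{-(\frac{1}{4\tau_{\log}}-\frac12\ln q^{-1}-\varepsilon'')n^2}$, so $f^*$ is entire and $\frac{1}{4\tau_{\log}(f^*)}=\liminf_n\frac{\ln(1/|b_n^*|)}{n^2}\ge\frac{1}{4\tau_{\log}}-\frac12\ln q^{-1}$; letting $\varepsilon\to0$ and multiplying by $4$ rearranges to part~(i).

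For part~(ii) I would establish the upper bound $\limsup_{r\to\infty}\frac{\nu_q(r;f)}{\ln r}\le4\tau_{\log}(\mu_q)$ first, since it simultaneously delivers the a priori control $q^{-\nu_q(r;f)}\lesssim r$ that the lower bound needs. As in the derivation of \eqref{W8}, for $R>r$, $N=\nu_q(r;f)$, and $c_N=\frac{q^{N-1}+q^{1-N}}{2}$ one has $N\ln\frac{R+c_N}{r+c_N}\le\ln\mu_q(R;f)$ once $\mu_q(R;f)\ge1$ (Lemma~\ref{munuprop}(i)); choosing $R$ with $\ln(R+c_N)=2\ln(r+c_N)$ and using $\ln\mu_q(R;f)\le(\tau_{\log}(\mu_q)+\delta)(\ln R)^2$ for large $R$ gives $N\le4(\tau_{\log}(\mu_q)+\delta)\ln(r+c_N)$. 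If $c_N\le r$ this reads $\frac{N}{\ln r}\le4(\tau_{\log}(\mu_q)+\delta)(1+o(1))$; if instead $c_N>r$ then $\ln(r+c_N)\le N\ln q^{-1}$, forcing $1\le4(\tau_{\log}(\mu_q)+\delta)\ln q^{-1}$, which contradicts the hypothesis $\tau_{\log}(\mu_q)<\frac{1}{4\ln q^{-1}}$ for $\delta$ small --- so the case $c_N>r$ occurs only for bounded $r$, which yields both the upper bound and $q^{-N}\le Cr$ for all large $r$. For the lower bound $\limsup_{r\to\infty}\frac{\nu_q(r;f)}{\ln r}\ge\tau_{\log}(\mu_q)$, use $q^{-N}\le Cr$ to make each factor $r+\frac{q^k+q^{-k}}{2}$, $0\le k\le N-1$, comparable to $r$, so that (dropping the negative term $-\frac{N(N-1)}{2}\ln q^{-1}$ and using $|a_N|\le1$ for large $r$) $\ln\mu_q(r;f)\le N\ln r(1+o(1))$; combining with $\ln\mu_q(r_j;f)\ge(\tau_{\log}(\mu_q)-\delta)(\ln r_j)^2$ along a suitable sequence $r_j\to\infty$ gives $\frac{\nu_q(r_j;f)}{\ln r_j}\ge\tau_{\log}(\mu_q)-\delta+o(1)$, and $\delta$ is arbitrary. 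The ``in particular'' clause is then immediate: $\gamma<\frac{1}{(4\ln q^{-1})\tau_{\log}(\mu_q)}$ makes $4\gamma\tau_{\log}(\mu_q)\ln q^{-1}<1$, so from the upper bound $\gamma\nu_q(r;f)\ln q^{-1}<\ln r$ for all large $r$, i.e.\ $q^{-\gamma\nu_q(r;f)}\le r$.

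I expect part~(i) to be the main obstacle, specifically getting the constant $\frac{1}{\tau_{\log}}-2\ln q^{-1}$ exactly right. The three quadratic-in-$n$ contributions --- $q^{n(n+1)/2}$ from $T(k,n)$, $q^{-nk}$ from Proposition~\ref{C}(ii), and $q^{(k-n)(k-n-1)/2}$ from \eqref{W0} --- all operate at scale $n^2$ and must balance against the Gaussian decay $e^{-(1/4\tau_{\log})k^2}$ of the Taylor coefficients so as to produce precisely the shift $\frac12\ln q^{-1}$ in $\liminf\frac{\ln(1/|\cdot|)}{n^2}$, equivalently $2\ln q^{-1}$ in the reciprocal type; a miscount there would spoil the sharp form of the bound. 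One also has to check that \eqref{A7}, in the form $\frac{1}{4\tau_{\log}}>\frac12\ln q^{-1}$, is exactly the condition making the relevant geometric-type sums single-term-dominated (so that the estimates close and $f^*$ comes out entire), and, since \cite{JKB} is cited only for the $(p,q)$-order formula, one should either locate the matching type statement there or derive it from the order version by a routine Legendre-transform computation.
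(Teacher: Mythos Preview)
Your proposal is correct and follows precisely the approach the paper indicates: the paper does not give an explicit proof of Lemma~\ref{mutype} but states that it is obtained ``parallel to Lemma~\ref{muorder}'', and your argument carries out exactly that parallelism, correctly tracking the $q$-powers at scale $n^2$ (which were negligible in the log-order setting) and replacing the Juneja--Kapoor--Bajpai order formula by its type counterpart (which the paper in fact attributes to \cite{JKB2} rather than \cite{JKB}). Your bookkeeping of the constants in part~(i) --- the shift $\frac12\ln q^{-1}$ in $\liminf_n\frac{\ln(1/|a_n|)}{n^2}$ coming from $q^{n(n+1)/2}q^{-n^2}$, and its preservation through \eqref{W0} since the $q^{(k-n)(k-n-1)/2}$ factor only helps --- is right, as is the dichotomy argument in part~(ii) that rules out $c_N>r$ for large $r$ under the hypothesis $\tau_{\log}(\mu_q)<\frac{1}{4\ln q^{-1}}$.
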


We also have the following result which can be compared with the formula
\[
	\tau_{\log}=\frac{1}{4}\limsup_{n\to\infty}{\frac{n^2}{\ln\frac{1}{|b_n|}}}
\]
obtained in \cite{JKB2}, about the log-type and the Maclaurin series coefficients $\{b_n\}$ of an entire function.

\begin{theorem}
\label{AWLP2}
Let $\displaystyle f(x):=\sum_{k=0}^\infty{a_k\phi_k(x;1)}$ be a non-constant entire function of log-type $\displaystyle\tau_{\log}<\frac{1}{6\ln q^{-1}}$.\ \ Then
\[
	\frac{1}{\frac{4}{\tau_{\log}(\mu_q)}+2\ln q^{-1}} \le \frac{1}{4}\limsup_{n\to\infty}{\frac{n^2}{\ln\frac{1}{|a_n|}}} \le \frac{1}{\frac{1}{\tau_{\log}(\mu_q)}-2\ln q^{-1}}.
\]
\end{theorem}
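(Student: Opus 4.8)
The plan is to mimic the two-part argument in the proof of Theorem~\ref{AWLP}, with the log-type $\tau_{\log}$ and Lemma~\ref{mutype} taking over the roles played there by the log-order $\sigma_{\log}$ and Lemma~\ref{muorder}. Write $T := \tau_{\log}(\mu_q)$ and $\Lambda := \tfrac14\limsup_{n\to\infty}\tfrac{n^2}{\ln(1/|a_n|)}$, so the assertion is $\tfrac{1}{\frac4T + 2\ln q^{-1}} \le \Lambda \le \tfrac{1}{\frac1T - 2\ln q^{-1}}$. First I would record the standing facts: by Lemma~\ref{mutype}(i) and the hypothesis $\tau_{\log} < \tfrac{1}{6\ln q^{-1}}$ one has $T \le \bigl(\tfrac{1}{\tau_{\log}} - 2\ln q^{-1}\bigr)^{-1} < \tfrac{1}{4\ln q^{-1}}$, so Lemma~\ref{mutype}(ii) is available; in particular $\limsup_{r\to\infty}\tfrac{\nu_q(r;f)}{\ln r} \le 4T$, and since $\tfrac{1}{4T\ln q^{-1}} > 1$ one has $r > q^{-\nu_q(r;f)}$ for all sufficiently large $r$. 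This last inequality is what keeps the factor $q^{\frac{n(n-1)}{2}}$ in $\phi_n(x;1)$ under control along the radii that will occur.

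For the upper bound on $\Lambda$ I would, for small $\varepsilon > 0$ and large $n$, evaluate the trivial estimate $|a_n\,\phi_n(-r;1)| \le \mu_q(r;f) \le e^{(T+\varepsilon)(\ln r)^2}$ at $r = q^{-\theta n}$ with $\theta := \tfrac{1}{2(T+\varepsilon)\ln q^{-1}}$, use $|\phi_n(-r;1)| = 2^n q^{\frac{n(n-1)}{2}}\prod_{k=0}^{n-1}\bigl(r + \tfrac{q^k+q^{-k}}{2}\bigr) \ge 2^n q^{\frac{n(n-1)}{2}} r^n$, and solve for $\ln|a_n|$; the exponent $\theta$ is exactly the one that minimises the resulting quadratic-in-$n$ bound, and one gets $\ln\tfrac{1}{|a_n|} \ge n^2\bigl(\tfrac{1}{4(T+\varepsilon)} - \tfrac{\ln q^{-1}}{2}\bigr)(1+o(1))$, whence $\Lambda \le \bigl(\tfrac1T - 2\ln q^{-1}\bigr)^{-1}$ on taking $\limsup$ and letting $\varepsilon \to 0$. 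For the lower bound on $\Lambda$ I would choose a sequence $r_j \to \infty$ realising the log-type of $\mu_q$, so $\ln\mu_q(r_j;f) = (T+o(1))(\ln r_j)^2$, set $N_j := \nu_q(r_j;f)$, and use that $\mu_q(r_j;f)$ is precisely the maximal term $|a_{N_j}\,\phi_{N_j}(-r_j;1)|$. Then from $\mu_q(r_j;f) \ge e^{(T-\varepsilon)(\ln r_j)^2}$, from $|\phi_{N_j}(-r_j;1)| \le 4^{N_j} q^{\frac{N_j(N_j-1)}{2}} r_j^{N_j}$ (valid because $r_j > q^{-N_j}$, so each factor $r_j + \tfrac{q^k+q^{-k}}{2}$ is at most $2r_j$), and from the constraint $N_j \le (4T+\varepsilon)\ln r_j$ of Lemma~\ref{mutype}(ii), one bounds $\ln\tfrac{1}{|a_{N_j}|} = \ln|\phi_{N_j}(-r_j;1)| - \ln\mu_q(r_j;f)$ above by an expression quadratic in $N_j$; optimising over the admissible value of $\ln r_j / N_j$ and taking $\limsup$ along $\{N_j\}$ delivers the left-hand inequality.

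The main obstacle, relative to Theorem~\ref{AWLP}, is that the comparison radii here grow only exponentially in $n$ (like $q^{-cn}$) rather than super-exponentially (like $e^{n^\gamma}$ with $\gamma>1$), so the term $\tfrac{n(n-1)}{2}\ln q^{-1}$ contributed by $\phi_n(x;1)$ is no longer swamped by the log-type term $T(\ln r)^2$ and must be balanced against it exactly; this is what forces the explicit optimisations above (over the exponent $\theta$ in the upper bound, and over the position of the central index along the realising sequence in the lower bound), and it is also the source of the gap between the two constants in the statement, mirroring the interval $[\tau_{\log}(\mu_q), 4\tau_{\log}(\mu_q)]$ in Lemma~\ref{mutype}(ii). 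The delicate bookkeeping is to carry the $O(n)$ and $O(N_j)$ error terms through these computations uniformly, which is exactly where the inequality $r > q^{-\nu_q(r;f)}$ is needed.
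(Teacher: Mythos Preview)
The paper does not supply a proof of Theorem~\ref{AWLP2}; it is merely stated in the Discussion section as a companion to Lemma~\ref{mutype}. So there is nothing in the paper to compare your argument against directly, and your plan to transplant the two halves of the proof of Theorem~\ref{AWLP}, with Lemma~\ref{mutype} replacing Lemma~\ref{muorder}, is exactly the intended route.

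Your upper-bound argument is correct as written: inserting $r=q^{-\theta n}$ into $|a_n\phi_n(-r;1)|\le\mu_q(r;f)$ and minimising the resulting quadratic in $n$ over $\theta$ yields $\ln\tfrac{1}{|a_n|}\ge n^2\bigl(\tfrac{1}{4(T+\varepsilon)}-\tfrac{\ln q^{-1}}{2}\bigr)(1+o(1))$, hence $\Lambda\le\bigl(\tfrac1T-2\ln q^{-1}\bigr)^{-1}$.

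For the lower bound, your outline works but in fact delivers more than you claim. With $u_j:=\tfrac{\ln r_j}{N_j}$ and the bounds $\ln|\phi_{N_j}(-r_j;1)|\le N_j\ln r_j-\tfrac{N_j^2}{2}\ln q^{-1}+O(N_j)$ (valid because $r_j>q^{-N_j}$) and $\ln\mu_q(r_j)\ge(T-\varepsilon)(\ln r_j)^2$, one obtains
\[
\frac{\ln(1/|a_{N_j}|)}{N_j^2}\;\le\;u_j-\frac{\ln q^{-1}}{2}-(T-\varepsilon)u_j^2+o(1).
\]
The right-hand side is a concave parabola in $u_j$ with global maximum $\tfrac{1}{4(T-\varepsilon)}-\tfrac{\ln q^{-1}}{2}$, and this bound holds \emph{whatever} value $u_j$ happens to take; the constraint $u_j\ge\tfrac{1}{4T+\varepsilon}$ from Lemma~\ref{mutype}(ii) never binds because the vertex $u^*=\tfrac{1}{2(T-\varepsilon)}$ already lies above it. Consequently $\liminf_j\tfrac{\ln(1/|a_{N_j}|)}{N_j^2}\le\tfrac{1}{4T}-\tfrac{\ln q^{-1}}{2}$, which gives $\Lambda\ge\bigl(\tfrac1T-2\ln q^{-1}\bigr)^{-1}$, matching the upper bound and hence forcing equality. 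So your method actually proves the sharper identity $\Lambda=\bigl(\tfrac1T-2\ln q^{-1}\bigr)^{-1}$ rather than only the left-hand inequality stated in the theorem; the factor-of-four slack in the paper's formulation mirrors the interval in Lemma~\ref{mutype}(ii) but is not needed here.
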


For the class of non-constant entire functions having \textit{minimal log-type}, i.e. $\tau_{\log}=0$, Lemma~\ref{mutype} implies that $\tau_{\log}(\mu_q)=0$ also, and that for every real $\gamma$ we have $q^{-\gamma N}\le r$ for every sufficiently large $r$.\ \ Theorem~\ref{AWLP2} implies that $\displaystyle\limsup_{n\to\infty}{\frac{n^2}{\ln\frac{1}{|a_n|}}} =0$.\ \ With the expression of $\varepsilon_{n,N}$ in Definition~\ref{qnormal} modified as
\[
	\varepsilon_{n,N}:=\frac{2q^{-n}}{q^{-\gamma N}}+\cdots+\frac{2q^{1-N}}{q^{-\gamma N}}\le 2q[N]_q q^{(\gamma-1)N},
\]
one can show that the $q$-exceptional set for a non-constant entire function of minimal log-type also has zero logarithmic density, and the main results of this paper (Theorems~\ref{tail} and \ref{WVmain}) extend, with basically the same proofs, to the class of transcendental entire functions having minimal log-type.\ \ Theorem~\ref{WVDE} also asserts that \eqref{WW} has no transcendental entire solutions with minimal log-type.

\bigskip
To summarize, we have adopted the K\"{o}vari-Hayman approach to establish a Wiman-Valiron theory for the algebraically $q$-deformed central difference operator in this paper, following the modification done by Ishizaki-Yanagihara.\ \ However, since the Fenton approach of Wiman-Valiron theory mentioned in \S\ref{sec:Intro} greatly weakens the assumption and clarifies the details needed to establish the theory for the classical Taylor expansions of entire functions, it is natural to ask if one can extend Fenton's approach of Wiman-Valiron theory to cope with the Newton series expansions or Askey-Wilson series expansions of entire functions.

\bigskip
\noindent \textbf{Acknowledgements.}\ \ The authors would like to thank the anonymous referee for her/his helpful and constructive comments and bringing Fenton's work \cite{Fenton} to their attention.

\bibliographystyle{amsplain}

\end{document}